\numberwithin{equation}{section}
\newtheorem{thm}{Theorem}[section]
\newtheorem{lem}[thm]{Lemma}
\newtheorem{prop}[thm]{Proposition}
\newtheorem{rem}{Remark}[section]
\newtheorem{example}[thm]{Example}
\newtheorem{defin}[thm]{Definition}
\newcommand{\eq}[1]{(\ref{#1})}
\renewcommand{\Re}{\operatorname{\rm Re}}
\renewcommand{\Im}{\operatorname{\rm Im}}
\newcommand{\beqast}{\begin{eqnarray*}}
\newcommand{\eqast}{\end{eqnarray*}}
\newcommand{\beqa}{\begin{eqnarray}}
\newcommand{\eqa}{\end{eqnarray}}
\newcommand{\bbe}{\begin{equation}}
\newcommand{\ee}{\end{equation}}
\renewcommand{\Re}{\operatorname{\rm Re}}
\renewcommand{\Im}{\operatorname{\rm Im}}
\newcommand{\bC}{{\mathbb C}}
\newcommand{\bE}{{\mathbb E}}
\newcommand{\bN}{{\mathbb N}}
\newcommand{\bQ}{{\mathbb Q}}
\newcommand{\bR}{{\mathbb R}}
\newcommand{\bZ}{{\mathbb Z}}
\newcommand{\cF}{{\mathcal F}}
\newcommand{\cE}{{\mathcal E}}
\newcommand{\cG}{{\mathcal G}}
\newcommand{\cL}{{\mathcal L}}
\newcommand{\cC}{{\mathcal C}}
\newcommand{\barX}{{\bar X}}
\newcommand{\uX}{{\underline X}}
\newcommand{\cEq}{{\mathcal E_q}}
\newcommand{\cEpq}{{\mathcal E^+_q}}
\newcommand{\cEmq}{{\mathcal E^-_q}}
\newcommand{\phipq}{{\phi^+_q}}
\newcommand{\phimq}{{\phi^-_q}}
\newcommand{\tV}{{\tilde V}}
\newcommand{\tW}{{\tilde W}}
\newcommand{\Om}{{\Omega}}
\newcommand{\be}{\beta}
\newcommand{\De}{\Delta}
\newcommand{\de}{\delta}
\newcommand{\eps}{\epsilon}
\newcommand{\lp}{\lambda_+}
\newcommand{\lm}{\lambda_-}
\newcommand{\La}{\Lambda}
\newcommand{\mum}{\mu_-}
\newcommand{\mup}{\mu_+}
\newcommand{\mumpr}{\mu'_-}
\newcommand{\muppr}{\mu'_+}
\newcommand{\num}{\nu_-}
\newcommand{\nup}{\nu_+}
\newcommand{\sg}{\sigma}
\newcommand{\om}{\omega}
\newcommand{\omm}{\om_-}
\newcommand{\omp}{\om_+}
\newcommand{\ze}{\zeta}
\newcommand{\ga}{\gamma}
\newcommand{\gap}{\gamma_+}
\newcommand{\gam}{\gamma_-}
\newcommand{\gappr}{\gamma'_+}
\newcommand{\gampr}{\gamma'_-}
\newcommand{\Ga}{\Gamma}
\newcommand{\barnu}{\bar\nu}
\newcommand{\bfo}{{\bf 1}}
\begin{document}

\title[Evaluation of expectations of functions of a L\'evy process and its extremum]
{Efficient evaluation  of expectations of functions of a L\'evy process and its extremum}
\author[
Svetlana Boyarchenko and
Sergei Levendorski\u{i}]
{
Svetlana Boyarchenko and
Sergei Levendorski\u{i}}

\begin{abstract}
We prove a simple general formula for the expectation of a function of a L\'evy process and its running extremum.
Under additional conditions, we derive analytical formulas using the Fourier/Laplace inversion and Wiener-Hopf factorization,
and discuss efficient numerical methods for realization of these formulas. As applications, the cumulative probability distribution
function of the process and its running supremum and the price of the option to exchange the supremum of a stock price for a power of 
the price
are calculated. The most efficient numerical methods use the sinh-acceleration technique and simplified trapezoid rule. The program in Matlab running on a Mac with moderate characteristics achieves the precision E-7 and better in several milliseconds, and E-14 - 
in a fraction of a second.

\end{abstract}

\thanks{
\emph{S.B.:} Department of Economics, The
University of Texas at Austin, 2225 Speedway Stop C3100, Austin,
TX 78712--0301, {\tt sboyarch@utexas.edu} \\
\emph{S.L.:}
Calico Science Consulting. Austin, TX.
 Email address: {\tt
levendorskii@gmail.com}}

\maketitle

\noindent
{\sc Key words:} L\'evy process, extrema of a L\'evy process, lookback options, barrier options, Wiener-Hopf factorization, Fourier transform, Laplace transform, Hilbert transform,
 Gaver-Wynn Rho algorithm, sinh-acceleration

\noindent
{\sc MSC2020 codes:} 60-08,42A38,42B10,44A10,65R10,65G51,91G20,91G60

\tableofcontents

\section{Introduction}\label{s:intro}
There exists a large body of literature devoted
to 
calculation of the expectation of a function of a L\'evy process and its running extremum, and related optimal stopping problems,
standard examples being  barrier and American options, and lookback options with barrier and/or American features. 
The general formulas for single barrier options with continuous monitoring were derived in \cite{KoBoL,barrier-RLPE,NG-MBS}
using the operator form of the Wiener-Hopf factorziation \cite{eskin}, under certain regularity conditions on the characteristic exponent.
In \cite{single}, the same formulas were proved for any L\'evy process. The first contribution of the paper is a similar simple general formula for the expectation of a function of a L\'evy process and its running extremum, evaluated at a deterministic time $T>0$. 

The pricing formulas are in terms of Laplace-Fourier inversion in dimensions 2 (first touch digitals and no-touch options),
 3 (barrier puts and calls, and joint probability distributions of a L\'evy process and its extremum), and 4 (more general options with lookback and barrier features). Hence, even marginally  accurate realizations of these formulas are far from trivial unless the characteristic exponent $\psi$ of the process is a rational function, hence, the Wiener-Hopf factors are rational functions as well. The factors are especially simple
 in the Double exponential jump-diffusion model (DEJD model) used in \cite{kou,KW1} and its generalization: Hyper-exponential jump-diffusion model (HEJD model) constructed independently in \cite{lipton-columbia,lipton-risk} (see also \cite{LiptonSelection}) and
\cite{amer-put-levy-maphysto,amer-put-levy}. In \cite{lipton-columbia,lipton-risk}, an explicit pricing formula for the joint distribution of
the L\'evy process and its extremum was derived using the Gaver-Stehfest algorithm (GS algorithm); the formula can be used to price options with barrier-lookback features. Later, a variation of the same technique was used in structural default models \cite{lipton-sepp}. In 
\cite{amer-put-levy-maphysto,amer-put-levy}, American options with finite time horizon are priced using the maturity randomization technique (Carr's randomization). An evident simplification of the latter method can be applied to barrier options
(see \cite{MSdouble}, where double-barrier options in regime-switching models are priced): the early exercise boundary is fixed and it is unnecessary to fund
an approximation to the boundary at each step of backward induction. In both cases (GS-algorithm and Carr's randomization), the main block is the evaluation of
the perpetual options. If the GS-algorithm is used, it may be necessary to use high precision arithmetics because the weights are very large (see, e.g., examples
in \cite{paraLaplace}. If the GS-algorithm can be used with double precision arithmetic, then, typically, the CPU time is smaller than
if Carr's randomization is applied. 

In the case of more general L\'evy processes, efficient calculations are much more difficult because the option price is very irregular at
the barrier and maturity. See the asymptotic analysis in \cite{NG-MBS,early-exercise,BIL,AsAndersenLipton}. 
The irregular behavior makes it difficult to evaluate the prices of perpetual options sufficiently accurately so that the GS-algorithm
or Carr's randomization can produce good results. Certain additional tricks \cite{single,BLdouble} can be used to do relatively accurate calculations in the state space but calculations in the dual space \cite{paraLaplace,paired,UltraFast} are significantly more efficient.  
 A simple very efficient algorithm derived in the paper is more efficient than the algorithms in
  the papers above; the algorithm is a more efficient variation of the algorithm in \cite{Contrarian}.
  Once a general exact formula in terms of a sum of 
 1-3 dimensional integrals  is derived, good changes of variables allows one to evaluate the integrals with an almost machine precision and
 at a much smaller CPU cost than using any previously developed method; the error tolerance of the order of E-7 
 can be satisfied in milliseconds using Matlab and Mac with moderate characteristics. The algorithm is short and involves a handful of vector operations and multiplication by  matrices of a moderate size at 3 places of the algorithm. We explain that the choice of an approximately optimal parameters of the numerical scheme simplifies significantly if the process is a Stieltjes-L\'evy process (SL-process). This class is defined in \cite{EfficientAmenable}, where it is shown that all popular classes of L\'evy processes bar the Merton model and Meixner processes are SL processes. For the Merton model and Meixner processes, the computational cost can be several times higher.   In the accompanying papers \cite{EfficientDiscExtremum,EfficientZsufficient,EfficientStableLevyExtremum,EfficientDoubleBarrier}, the method of the paper is modified and applied
  to options with discrete monitoring, stable L\'evy processes and double-barrier options. Note that the method of the present paper and its
  analog for the options with discrete monitoring are more efficient than the other methods available in the literature - see, e.g.,
  \cite{GSh,AAP,AKP,beta,KudrLev09,KudrLev11,BIL,HaislipKaishev14,FusaiGermanoMarazzina,kirkbyJCompFinance18,Linetsky08,feng-linetsky09,LiLinetsky2015} and the bibliographies therein.
  
 Let $X$ be a one-dimensional L\'evy process on the filtered probability space $(\Om, \cF, \{\cF_t\}_{t\ge 0}, \bQ)$
satisfying the usual conditions, and let $\bE$ be the expectation operator under $\bQ$. 
Let  $\barX_t=\sup_{0\le s\le t}X_s$ and $\uX_t=\inf_{0\le s\le t}X_s$ be the supremum
and infimum processes (defined path-wise, a.s.); $X_0=\barX_0=\uX_0=0$. For a measurable function $f$, consider 
$
V(f;T;x_1,x_2)=\bE[f(x_1+X_T, \max\{x_2, x_1+\barX_T\})],
$
where $T>0$ and $x_1\le x_2$ are real.
 In Section \ref{ss:main_theorems},
we derive simple explicit formulas for the Laplace transform $\tV(f;q;x_1,x_2)$ of $V(f;T;x_1,x_2)$ using the operator form of the Wiener-Hopf factorization technique
  \cite{BLSIAM02,NG-MBS,barrier-RLPE,EPV,IDUU,single}. Basic facts of the Wiener-Hopf factorization technique in the form used in the paper
and definitions of general classes of L\'evy processes amenable to efficient calculations are collected in Section \ref{s:prelim}.
The formulas are in terms of the (normalized) expected present value operators $\cEq$, $\cEpq$ and $\cEmq$ defined by 
$\cEq u(x)=\bE[u(x+X_{T_q})],$
$\cEpq u(x)=\bE[u(x+\barX_{T_q})]$, $\cEmq u(x)=\bE[u(x+\uX_{T_q})]$, where $q>0$ and $T_q$ is an exponentially distributed random variable of mean $1/q$ independent of $X$. 
In the case of bounded functions (Theorem \ref{thm:X_barX_exp}), the formulas are proved for any L\'evy process, stable ones including; in the case of functions of exponential growth (Theorem \ref{thm:X_barX_exp_2}), the tail(s) of the L\'evy 
density must decay exponentially. A special case $x_1=x_2=0$ appeared earlier in the working paper
\cite{KudrLev11}; the version formulated and proved in the present paper is more efficient for applications. Theorems
\ref{thm:X_barX_exp} and \ref{thm:X_barX_exp_2} generalize  formulas for  $\bE[f(x_1+X_{T_q}, \min\{x_2,\uX_{T_q}\})]$ and
$\bE[f(x_1+X_{T_q}, \max\{x_2,\barX_{T_q}\})]$ derived in  \cite{BLSIAM02,NG-MBS,barrier-RLPE,EPV,IDUU,single} for the payoff functions of the form
$f(x_1,x_2)=g(x_1)\bfo_{(h,+\infty)}(x_2)$ and $f(x_1,x_2)=g(x_1)\bfo_{(-\infty,h)}(x_2)$, respectively.  
In Section \ref{s:evalFT}, we use the Fourier transform and the equalities $\cE^\pm_q e^{ix\xi}=\phi^\pm_q(\xi) e^{ix\xi}$, where $\phi^\pm_q(\xi)$ are the Wiener-Hopf factors,
 to realize the general formula derived in Section \ref{exp_Levy_extremum} as a sum of integrals.   As applications of the general theorems, in Section \ref{s:two_examples}, we derive
explicit formulas for the cumulative distribution function (cpdf) of the L\'evy process and its supremum,
and for the option to exchange  $e^{\barX_T}$ for the power $e^{\be X_T}$. 
In Section \ref{s:numer}, 
we demonstrate how the sinh-acceleration technique used in \cite{SINHregular} 
to price European options and applied in \cite{Contrarian,ConfAccelerationStable,BSINH} to pricing barrier options, evaluation of special functions and the coefficients in BPROJ method respectively can be applied to greatly decrease the sizes of grids and  CPU time needed to satisfy the desired error tolerance. This feature makes the method of the paper more efficient than  methods that use the fast inverse Fourier transform, fast convolution or fast Hilbert transform.
The changes of variables must be in a certain agreement as in \cite{paraLaplace,paired}, where a less efficient
family of fractional-parabolic deformations was used.
Note that Talbot's deformation \cite{Talbot79} cannot be applied if the conformal deformations technique is applied to the integrals with respect to the other dual variables.
 In Section \ref{s:concl}, we summarize the results of the paper and outline several extensions of the method of the paper. 
We relegate to Appendices  technical details,
and the outline of other methods that are used to price options with barrier/lookback features. Figures and one of the  tables are in Appendix
\ref{ss:figures}.

\section{Preliminaries}\label{s:prelim}

\subsection{Wiener-Hopf factorization} 
Lemma \ref{l:deep} and equalities \eq{eq:operWHF} and \eq{whf0} below are three equivalent forms of the Wiener-Hopf factorization for L\'evy processes. Eq. \eq{whf0}
and \eq{eq:operWHF} are special cases of the Wiener-Hopf factorization in complex analysis and the general theory of 
boundary problems for pseudo-differential operators (pdo), where more general classes of functions and operators appear
(see, e.g., \cite{eskin}).

In probability, the version \eq{whf0} was obtained  (see \cite{sato} for references) before Lemma \ref{l:deep}; the version 
\eq{eq:operWHF} was proved in   \cite{BLSIAM02,NG-MBS,barrier-RLPE,EPV,IDUU} under additional regularity conditions on the process, and in \cite{single}, for any L\'evy process.

\begin{lem}\label{l:deep} (\cite[Lemma 2.1]{greenwood-pitman}, and
\cite[p.81]{RW})
Let $X$ and $T_q$ be as above.  Then
\begin{enumerate}[(a)]
\item the random variables $\barX_{T_q}$ and
$X_{T_q}-\barX_{T_q}$ are independent; and

\item the random variables $\uX_{T_q}$ and
$X_{T_q}-\barX_{T_q}$ are identical in law.
\end{enumerate}
\end{lem}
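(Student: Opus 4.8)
The plan is to treat the two assertions separately: part (b) follows cleanly from the time-reversal (duality) property of L\'evy processes, while part (a) requires the excursion-theoretic decomposition of the path at its last maximum, and that is where the real work lies.

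For part (b), I would exploit the fact that, because $X$ has stationary and independent increments and $T_q$ is an independent exponential time, the time-reversed path
\[
\hat X_s := X_{T_q} - X_{(T_q - s)^-}, \qquad 0 \le s \le T_q,
\]
has the same law, as a process run over the random horizon $[0,T_q]$, as $(X_s)_{0\le s\le T_q}$. Computing the extrema of $\hat X$ gives $\sup_s \hat X_s = X_{T_q} - \uX_{T_q}$ and $\inf_s \hat X_s = X_{T_q} - \barX_{T_q}$, so the distributional identity $\hat X \stackrel{d}{=} X$ immediately yields $X_{T_q} - \barX_{T_q} \stackrel{d}{=} \uX_{T_q}$, which is precisely (b). The only point requiring care is the left limit in the definition of $\hat X$ and the fact that, up to a Lebesgue-null set of times, the supremum and infimum functionals are insensitive to replacing a c\`adl\`ag path by its left-limits.

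For part (a), I would introduce the time
\[
G := \sup\{s \le T_q : X_s \vee X_{s^-} = \barX_{T_q}\},
\]
the last instant before $T_q$ at which $X$ attains its overall maximum $\barX_{T_q}$. Then $\barX_{T_q}$ is determined by the path up to $G$, while the increment $X_{T_q} - \barX_{T_q}$ is determined by the post-$G$ path; hence it suffices to prove that the pre-$G$ and post-$G$ paths are independent. The natural tool is the excursion theory of the reflected process $R_t := \barX_t - X_t \ge 0$: its excursions away from $0$ form a Poisson point process under the It\^o excursion measure, the associated ladder process $(L^{-1}, H)$ (inverse local time and ladder heights) is a bivariate subordinator, and $\barX_{T_q}$ is the value of $H$ sampled at an exponential time of the local-time clock. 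Splitting at the maximum then asserts that the excursion straddling $T_q$ together with the post-$G$ segment is independent of everything recorded up to $G$, with the memorylessness of the exponential making the splitting exact.

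The main obstacle is precisely this splitting-at-the-maximum step: it requires a careful construction of local time at the supremum and of the excursion measure of $R$, and an application of the master (compensation) formula for Poisson point processes to show that conditioning on $G$ decouples the two halves of the path. An alternative that sidesteps excursion theory is to discretize: sample $X$ along a mesh $h\bZ_{\ge0}$, replace $T_q$ by a geometric number of steps tuned so that it converges to $T_q$ as $h \to 0$, invoke the elementary combinatorial (cyclic-permutation / Feller duality) proof of the analogous independence for random walks stopped at a geometric time, and pass to the limit; here the delicate point is the weak convergence together with the continuity of the extremum functionals, ensuring the sampled maximum converges to $\barX_{T_q}$. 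Either way, once the decomposition in (a) is in hand, (b) may alternatively be recovered from it by time-reversing the post-$G$ segment.
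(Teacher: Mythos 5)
The paper gives no proof of this lemma at all: it is imported verbatim from the cited sources (Greenwood--Pitman, Lemma~2.1, and Rogers--Williams, p.~81), and your sketch is exactly the argument of those references --- time-reversal/duality for (b), and excursion-theoretic splitting of the path at the last maximum before an independent exponential time for (a), with the geometric-time random-walk approximation being the other classical route. Both halves of your plan are sound, including the two points you correctly flag as delicate: the a.s.\ insensitivity of the extremum functionals to left limits (harmless here since $T_q$ has a density and $X$ has no fixed jump times), and the fact that memorylessness of $T_q$ is what makes the splitting at $G$ exact, which fails at a deterministic horizon.
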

By symmetry, the statements (a), (b) are valid with $\barX$ and $\uX$ interchanged.

Two basic forms of
  the Wiener-Hopf factorization (both immediate from
 Lemma \ref{l:deep}) are   
 \bbe\label{eq:operWHF}
 \cEq=\cEpq\cEmq=\cEmq\cEpq,
 \ee
 \bbe\label{whf0}
 \frac{q}{q+\psi(\xi)}=\phipq(\xi)\phimq(\xi).
 \ee
Evidently, the EPV-operators are bounded operators in $L_\infty(\bR)$. In exponential L\'evy models,
 payoff functions may increase exponentially, hence, we  consider the action of the EPV operators in $L_\infty(\bR; w)$, $L_\infty$- spaces with the weights 
 $w(x)=e^{\ga x}$, $\ga\in [\mum,\mup]$, and $w(x)=\min\{e^{\mum x},e^{\mup x}\}$, where $\mum\le0\le \mup, \mum<\mup$; the norm
 is defined by $\|u\|_{L_\infty(\bR;w)} =\|wu\|_{L_\infty(\bR)}$.

 Recall that a function $f$ is said to be analytic in the closure of an open set $U$ if $f$ is analytic in the interior of $U$ and continuous up to the boundary of $U$. We need the following straightforward result (see, e.g., \cite{NG-MBS,IDUU}).
 \begin{lem}\label{phipmq_anal_cont}
 Let  there exist  $\mum\le 0\le \mup$, $\mum<\mup$, such that  $\bE[e^{-\ga X_1}]<\infty$, $\forall\ \ga\in [\mum,\mup]$. 
 
 Then
 \begin{enumerate}[(i)]
 \item
 $\psi(\xi)$ admits analytic continuation to the strip $S_{[\mum,\mup]}:=\{\xi\in \bC\ |\
 \Im\xi\in [\mum,\mup]\}$; 
 \item 
 let $\sg>\max\{-\psi(i\mum), -\psi(i\mup)\}$. Then there exists $c>0$  s.t. $|q+\psi(\xi)|\ge c$ for $ q\ge \sg$ and $\xi\in S_{[\mum,\mup]}$;
 \item
   let  $q\ge \sg$. Then  $\phipq(\xi)$ (resp., $\phimq(\xi)$) admits analytic continuation to  $\{\Im\xi\ge \mum\}$
   (resp., $\{\Im\xi\le \mup\}$) given by    \beqa\label{analcontphipq}
   \phipq(\xi)&=&\frac{q}{(q+\psi(\xi))\phimq(\xi)}, \ \Im\xi\in [\mum,0],
   \\\label{analcontphimq}
   \phimq(\xi)&=&\frac{q}{(q+\psi(\xi))\phipq(\xi)}, \ \Im\xi\in [0,\mup];
   \eqa
  \item   $\phipq(\xi)$ (resp., $\phimq(\xi)$) is uniformly bounded on  $\{\Im\xi\ge \mum\}$
   (resp., $\{\Im\xi\le \mup\}$);
   \item
for any weight function of the form $w(x)=e^{\ga x}$, $\ga\in [\mum,\mup]$, and $w(x)=\min\{e^{\mum x},e^{\mup x}\},$
    operators $\cE^\pm_q$ are bounded in $L_\infty(\bR;w)$.
 
   \end{enumerate}
    \end{lem}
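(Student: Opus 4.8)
The plan is to prove (i)--(v) in order, since each part feeds the next. For (i), I would work from the L\'evy--Khintchine representation of $\psi$. The hypothesis $\bE[e^{-\ga X_1}]<\infty$ for $\ga\in[\mum,\mup]$ is equivalent to $\int_{|y|>1}e^{-\ga y}\,\nu(dy)<\infty$ on this range, where $\nu$ is the L\'evy measure; hence the integral term in the L\'evy--Khintchine formula converges absolutely for every $\xi$ with $\Im\xi\in[\mum,\mup]$, and differentiation under the integral sign (dominated by these finite exponential tail bounds) shows it is holomorphic in the open strip and continuous up to its boundary, while the drift-plus-Gaussian part is entire. Equivalently, $\xi\mapsto\bE[e^{i\xi X_1}]$ is finite on the strip because $|e^{i\xi X_1}|=e^{-(\Im\xi)X_1}$, holomorphic there by Morera/Fubini, and $\psi=-\log\bE[e^{i\xi X_1}]$. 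For (ii) the key observation is a pointwise lower bound on $\Re\psi$ across the strip. From $|\bE[e^{i\xi X_1}]|\le\bE[e^{-(\Im\xi)X_1}]=e^{-\psi(i\,\Im\xi)}$ I get $\Re\psi(\xi)\ge\psi(i\,\Im\xi)$. Since $\tau\mapsto\bE[e^{-\tau X_1}]$ is log-convex (H\"older), the map $\tau\mapsto\psi(i\tau)=-\log\bE[e^{-\tau X_1}]$ is concave on $[\mum,\mup]$ and attains its minimum at an endpoint, so $\Re\psi(\xi)\ge\min\{\psi(i\mum),\psi(i\mup)\}$. Hence $\Re(q+\psi(\xi))\ge q-\max\{-\psi(i\mum),-\psi(i\mup)\}\ge\sg-\max\{-\psi(i\mum),-\psi(i\mup)\}=:c>0$ for all $q\ge\sg$ and $\xi\in S_{[\mum,\mup]}$, whence $|q+\psi(\xi)|\ge c$.

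For (iii), I would use that $\phimq(\xi)=\bE[e^{i\xi\uX_{T_q}}]$ with $\uX_{T_q}\le0$, so the defining integral converges and is holomorphic on $\{\Im\xi<0\}$, continuous and bounded by $1$ up to $\bR$; the symmetric statement handles $\phipq$ on $\{\Im\xi>0\}$. The point I must secure is that $\phimq$ does not vanish on $\{\Im\xi\in[\mum,0]\}$: on $\bR$ this is already forced by \eqref{whf0} together with part (ii), and off the real axis I would get it from the classical exponential representation of the Wiener--Hopf factor, which exhibits $\phimq$ as $\exp$ of an absolutely convergent integral on $\{\Im\xi\le0\}$, hence zero-free there. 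Granting this, the right-hand side of \eqref{analcontphipq} is holomorphic on $\{\Im\xi\in(\mum,0)\}$ and continuous up to the boundary (numerator $q$; denominator bounded away from $0$ by part (ii) and non-vanishing of $\phimq$), and by \eqref{whf0} it agrees with $\phipq$ on $\bR$. Gluing the two holomorphic functions across $\bR$ (they agree and are continuous there, so Morera applies) yields the analytic continuation of $\phipq$ to $\{\Im\xi\ge\mum\}$; the continuation of $\phimq$ via \eqref{analcontphimq} is symmetric.

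For (iv), on $\{\Im\xi\ge0\}$ one has $|\phipq(\xi)|=|\bE[e^{i\xi\barX_{T_q}}]|\le1$ since $\barX_{T_q}\ge0$. On the strip I would show the continuation is again a convergent integral: \eqref{analcontphipq} is finite at $\xi=i\mum$, so by the standard identification of the strip of analyticity of a characteristic function with its strip of finite exponential moments, $\bE[e^{-\mum\barX_{T_q}}]<\infty$ and $\phipq(\xi)=\bE[e^{i\xi\barX_{T_q}}]$ throughout $\{\Im\xi\in[\mum,0]\}$. Then $|\phipq(\xi)|\le\bE[e^{-(\Im\xi)\barX_{T_q}}]\le\bE[e^{-\mum\barX_{T_q}}]$, because $\tau\mapsto\bE[e^{-\tau\barX_{T_q}}]$ is increasing for $\tau\le0$. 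This bounds $\phipq$ uniformly on $\{\Im\xi\ge\mum\}$, and the bound for $\phimq$ on $\{\Im\xi\le\mup\}$ is symmetric. Finally, for (v) I would estimate directly: for $w(x)=e^{\ga x}$ with $\ga\in[\mum,\mup]$ and $|u(y)|\le\|u\|_{L_\infty(\bR;w)}e^{-\ga y}$, one has $|\cEpq u(x)|\le\bE[|u(x+\barX_{T_q})|]\le\|u\|_{L_\infty(\bR;w)}e^{-\ga x}\bE[e^{-\ga\barX_{T_q}}]$, so $\|\cEpq u\|_{L_\infty(\bR;w)}\le\bE[e^{-\ga\barX_{T_q}}]\,\|u\|_{L_\infty(\bR;w)}$ with $\bE[e^{-\ga\barX_{T_q}}]=\phipq(i\ga)<\infty$ by (iv) (it is $\le1$ for $\ga\ge0$); $\cEmq$ is symmetric, and $w(x)=\min\{e^{\mum x},e^{\mup x}\}$ is handled by splitting into $x\ge0$ and $x<0$.

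The only genuinely nontrivial step is the non-vanishing of the Wiener--Hopf factor off the real axis in (iii): this is precisely what makes \eqref{analcontphipq} define a holomorphic rather than a merely meromorphic continuation, and the natural factorization identity \eqref{whf0} alone cannot rule out a compensating zero of $\phimq$ paired with a pole of $\phipq$. I would therefore secure it through the exponential representation of the factor, after which (iv) (inheritance of the exponential moment by $\barX_{T_q}$) and (v) (the weighted-norm estimate) are essentially bookkeeping.
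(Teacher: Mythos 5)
Your proposal is correct, and it supplies in full what the paper's own proof only sketches: the paper proves (i) exactly as in your second variant (finiteness of the characteristic function on $S_{[\mum,\mup]}$ from $|\bE[e^{i\xi X_t}]|\le\bE[e^{-\Im\xi\,X_t}]$ and the bound $e^{-\ga x}\le e^{-\mum x}+e^{-\mup x}$), proves (ii) by a different device, and dismisses (iii)--(v) as immediate. For (ii) the paper bounds the resolvent rather than the symbol: from the same two-exponential bound it gets $|1/(q+\psi(\xi))|=\bigl|\int_0^\infty e^{-qt}\,\bE[e^{i\xi X_t}]\,dt\bigr|\le 1/(q+\psi(i\mum))+1/(q+\psi(i\mup))\le C(\sg)$ for $q\ge\sg$, hence $|q+\psi(\xi)|\ge 1/C(\sg)$. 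Your route --- log-convexity of $\tau\mapsto\bE[e^{-\tau X_1}]$ by H\"older, hence concavity of $\tau\mapsto\psi(i\tau)$ and the bound $\Re(q+\psi(\xi))\ge \sg-\max\{-\psi(i\mum),-\psi(i\mup)\}$ --- is equally valid and in fact stronger: it places $q+\psi(\xi)$ in a fixed right half-plane, which is the kind of information needed later when $\ln(1+\psi(\eta)/q)$ is taken in \eq{phip1}--\eq{phim1}, whereas the paper's softer argument (no convexity, pure positivity) yields only a modulus bound. For (iii)--(iv) you correctly isolate the one non-automatic point, namely that $\phimq$ is zero-free on $\{\Im\xi\le 0\}$ off the real axis --- which the identity \eq{whf0} plus (ii) forces only on $\bR$ --- and you secure it from the classical exponential (Spitzer-type) representation of the factor on the closed half-plane, equivalently from infinite divisibility of $\uX_{T_q}$; this is precisely the fact underlying the paper's \lemm{phipmq_explicit1}, and your Morera gluing across $\bR$ together with the Lukacs-type identification $\phipq(\xi)=\bE[e^{i\xi\barX_{T_q}}]$ on $\{\Im\xi\in[\mum,0]\}$, with $\bE[e^{-\mum\barX_{T_q}}]<\infty$ obtained by monotone convergence along $\xi=i\ga$, $\ga\downarrow\mum$, is the standard way to make the paper's ``immediate'' rigorous, after which (v) is the straightforward weighted estimate you give. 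Two harmless slips: in (iv), $\tau\mapsto\bE[e^{-\tau\barX_{T_q}}]$ is nonincreasing in $\tau$ (you wrote ``increasing for $\tau\le 0$''), though the inequality you actually use, $\bE[e^{-(\Im\xi)\barX_{T_q}}]\le\bE[e^{-\mum\barX_{T_q}}]$ for $\Im\xi\ge\mum$, is correct since $\barX_{T_q}\ge 0$; and in (v), for $w(x)=\min\{e^{\mum x},e^{\mup x}\}$ your split yields the operator bound with constant of the form $2\max\{\phi^\pm_q(i\mum),\phi^\pm_q(i\mup)\}$, which is all the lemma claims.
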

  We have $\cE^\pm_q e^{ix\xi}=\phi^\pm_q(\xi) e^{ix\xi}$. Hence, $\cE^\pm_q$ are pseudo-differential operators with symbols $\phi^\pm_q$,  which means that
 $\cE^\pm_qu(x)=\cF^{-1}_{\xi\to x}\phi^\pm_q(\xi)\cF_{x\to\xi}u(x)$ for sufficiently regular functions $u$.
 
 \subsection{General classes of L\'evy processes amenable to efficient calculations}\label{ss:gen_eff_Levy}
The conditions of Lemma \ref{phipmq_anal_cont} are satisfied for all popular classes of L\'evy processes
bar stable L\'evy processes. See \cite{NG-MBS,barrier-RLPE,BLSIAM02}, where the general class of Regular L\'evy processes of
exponential type (RLPE) is introduced. An additional property useful for development of efficient numerical methods
is a regular behavior of the characteristic exponent at infinity. In the definition below, we relax the conditions in \cite{NG-MBS,barrier-RLPE,BLSIAM02} allowing for non-exponential decay of one of the tails of the L\'evy density. Indeed, for calculations in the dual space,
it does not matter whether the strip of analyticity contains the real line or is adjacent to the real line.

For $\nu=0+$ (resp., $\nu=1+$), set $|\xi|^\nu=\ln|\xi|$ (resp., $|\xi|^\nu=|\xi|\ln|\xi|$), and introduce the following complete ordering in
the set $\{0+,1+\}\cup (0,2]$: the usual ordering in $(0,2]$; $\forall\ \nu>0, 0+<\nu$; $\forall\ \nu>1, 1<1+<\nu$.
We use    coni
 $\cC_{\gam,\gap}=\{e^{i\varphi}\rho\ |\ \rho> 0, \varphi\in (\gam,\gap)\cup (\pi-\gap,\pi-\gam)\}$, 
 $\cC_{\ga}=\{e^{i\varphi}\rho\ |\ \rho> 0, \varphi\in (-\ga,\ga)\}$, and the strip $S_{(\mum,\mup)}=\{\xi\ |\ \Im\xi\in (\mum,\mup)\}$.

 \begin{defin}\label{def:SINH_reg_proc_1D0}(\cite[Defin. 2.1]{EfficientAmenable})
 We say that $X$ is a SINH-regular L\'evy process  (on $\bR$) of   order
 $\nu$ and type $((\mum,\mup);\cC; \cC_+)$,
 iff
the following conditions are satisfied:
\begin{enumerate}[(i)]
\item
$\nu\in\{0+,1+\}\cup (0,2]$; $\mum<0\le \mup$ or $\mum\le 0<\mup$;
\item
$\cC=\cC_{\gam,\gap}, \cC_+=\cC_{\gampr,\gappr}$, where $\gam<0<\gap$, $\gam\le \gampr\le 0\le \gappr\le \gap$,
and $|\gampr|+\gappr>0$; 
\item
the characteristic exponent $\psi$ of $X$ can be represented in the form
\bbe\label{eq:reprpsi}
\psi(\xi)=-i\mu\xi+\psi^0(\xi),
\ee
where $\mu\in\bR$, and 
$\psi^0$ admits analytic continuation to $i(\mum,\mup)+ (\cC\cup\{0\})$;
\item
for any $\varphi\in (\gam,\gap)$, there exists $c_\infty(\varphi)\in \bC\setminus (-\infty,0]$ s.t.
\begin{equation}\label{asympsisRLPE}
\psi^0(\rho e^{i\varphi})\sim  c_\infty(\varphi)\rho^\nu, \quad \rho\to+\infty;
\end{equation}
\item
the function $(\gam,\gap)\ni \varphi\mapsto c_\infty(\varphi)\in \bC$ is continuous;
\item
for any $\varphi\in (\gampr, \gappr)$, $\Re c_\infty(\varphi)>0$.
\end{enumerate}
\end{defin}
\begin{example}\label{ex:KoBoL}{\rm  A generic process of Koponen's family was constructed in  \cite{genBS,KoBoL}  as a mixture of spectrally negative and positive pure jump processes, with the L\'evy measure
\begin{equation}\label{KBLmeqdifnu}
F(dx)=c_+e^{\lm x}x^{-\nu_+-1}\bfo_{(0,+\infty)}(x)dx+
 c_-e^{\lp x}|x|^{-\nu_--1}\bfo_{(-\infty,0)}(x)dx,
\end{equation}
where $c_\pm>0, \nu_\pm\in [0,2), \lm<0<\lp$. Starting with \cite{EfficientAmenable}, we allow for $c_+=0$ or $c_-=0$,
 $\lm=0<\lp$ and $\lm<0\le \lp$. This generalization is almost immaterial for evaluation of probability distributions and expectations because
 for efficient calculations, the first crucial property, namely, the existence of a strip of analyticity of the characteristic exponent, around or adjacent to the real line, holds if $\lm<\lp$ and $\lm\le 0\le \lp$. \footnote{The property does not hold
 if there is no such a strip (formally, $\lm=0=\lp$). The classical example are stable L\'evy processes.
 The conformal deformation technique  can be modified for this case as well \cite{ConfAccelerationStable}.}
 Furthermore, the Esscher transform allows one to reduce both cases $\lm=0<\lp$ and $\lm<0\le \lp$ to the case $\lm<0<\lp$.
 If $\nu_\pm\in (0,2), \nu_\pm\neq 1$,
\bbe\label{KBLnupnumneq01}
\psi^0(\xi)=c_+\Ga(-\nu_+)((-\lm)^{\nu_+}-(-\lm-i\xi)^{\nu_+})+c_-\Ga(-\nu_-)(\lp^{\nu_-}-(\lp+i\xi)^{\nu_-}).
\ee
 Note that a specialization
 $\nu_\pm=\nu\neq 1$, $c=c_\pm>0$, of KoBoL used in a series of numerical examples in \cite{genBS} was named CGMY model in \cite{CGMY} (and the labels were changed:
 letters $C,G,M,Y$ replace the parameters $c,\nu,\lm,\lp$ of KoBoL):
 \bbe\label{KBLnuneq01}
 \psi^0(\xi)= c\Ga(-\nu)[(-\lm)^{\nu}-(-\lm- i\xi)^\nu+\lp^\nu-(\lp+ i\xi)^\nu].
\ee
Evidently, $\psi^0$ given by \eq{KBLnuneq01} is analytic in $\bC\setminus i\bR$, and  $\forall\ \varphi\in (-\pi/2,\pi/2)$, \eq{asympsisRLPE} holds with
\bbe\label{ascofnupeqnumcc}
c_\infty(\varphi)=-2c\Ga(-\nu)\cos(\nu\pi/2)e^{i\nu\varphi}.
\ee
}
\end{example}
In \cite{EfficientAmenable}, we defined a class of Stieltjes-L\'evy processes (SL-processes). In order to save space, we do not reproduce the complete set of definitions. Essentially, $X$ is called a (signed) SL-process if $\psi$ is of the form
\bbe\label{eq:sSLrepr}
\psi(\xi)=(a^+_2\xi^2-ia^+_1\xi)ST(\cG^0_+)(-i\xi)+(a^-_2\xi^2+ia^-_1\xi)ST(\cG^0_-)(i\xi)+(\sg^2/2)\xi^2-i\mu\xi, 
\ee
where $ST(\cG)$ is the Stieltjes transform of a (signed) Stieltjes measure $\cG$,  $a^\pm_j\ge 0$, and $\sg^2\ge0$, $\mu\in\bR$.
We call a (signed) SL-process regular if it is SINH-regular. We proved in \cite{EfficientAmenable} that if $X$ is a (signed) SL-process then $\psi$ admits analytic continuation to the complex plane with two cuts along the imaginary axis, and
if $X$ is a SL-process, then, for any $q>0$, equation $q+\psi(\xi)=0$ has no solution on $\bC\setminus i\bR$.
We also proved that all popular classes of L\'evy processes bar the Merton model and Meixner processes are regular SL-processes, with $\ga_\pm=\pm \pi/2$;
the Merton model and Meixner processes are regular signed SL-processes, and $\ga_\pm=\pm \pi/4$.
 For lists of SINH-processes and SL-processes, with calculations of the order and type,
see \cite{EfficientAmenable}.

\subsection{Evaluation of  the Wiener-Hopf factors}\label{ss:expl_WHF}
      For  numerical realizations, we need the following explicit formulas for $\phi^\pm_q$ (see, e.g., \cite{NG-MBS,single,paraLaplace,paired,Contrarian}).
 \begin{lem}\label{phipmq_explicit1}
 Let $\mu_\pm$, $X$ and $q$ satisfy the conditions of Lemma \ref{phipmq_anal_cont}. Then
 \begin{enumerate}[(a)]
 \item
 for any $\omm\in (\mum,\mup)$ and $\xi\in\{\Im\xi>\omm\}$,
 \bbe\label{phip1}
 \phipq(\xi)=\exp\left[\frac{1}{2\pi i}\int_{\Im\eta =\omm}\frac{\xi\ln(1+\psi(\eta)/q)}{\eta(\xi-\eta)}d\eta\right];
 \ee
  \item
 for any $\omp\in (\mum,\mup)$ and $\xi\in\{\Im\xi<\omp\}$,
 \bbe\label{phim1}
 \phimq(\xi)=\exp\left[-\frac{1}{2\pi i}\int_{\Im\eta=\omp}\frac{\xi\ln(1+\psi(\eta)/q)}{\eta(\xi-\eta)}d\eta\right].
 \ee
\end{enumerate}
\end{lem}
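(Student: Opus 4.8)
The plan is to read the two formulas as the Sokhotski--Plemelj (Cauchy-type) additive splitting of $\log$ applied to the scalar Wiener--Hopf factorization \eq{whf0}. Taking logarithms in \eq{whf0} gives, on the strip $S_{(\mum,\mup)}$, the identity $-h(\xi)=\log\phipq(\xi)+\log\phimq(\xi)$, where $h(\xi):=\ln(1+\psi(\xi)/q)$; the right-hand sides of \eq{phip1}--\eq{phim1} are exactly the pieces of the decomposition of $-h$ into a summand analytic in an upper half-plane and one analytic in a lower half-plane. The factor $\xi$ in the numerator (equivalently, via $\frac{\xi}{\eta(\xi-\eta)}=\frac1{\xi-\eta}+\frac1\eta$, the extra $1/\eta$) is the subtraction of the integrand at $\eta=0$; since $h(0)=\ln(1+\psi(0)/q)=0$, this regularizes the Cauchy integral and simultaneously enforces the normalization $\phipq(0)=\phimq(0)=1$, which is the correct value because $\phi^\pm_q(0)=\bE[e^{i0\cdot\barX_{T_q}}]=1$.

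First I would fix $q\ge\sg$ (as in Lemma \ref{phipmq_anal_cont}), so that, by parts (i)--(ii), $h$ is a well-defined single-valued analytic function on $S_{[\mum,\mup]}$ determined by $h(0)=0$; the general case $q>0$ follows by analytic continuation in $q$. Define $\Phi^\pm(\xi)$ as the exponentials of the right-hand sides of \eq{phip1}, \eq{phim1}. Convergence and analyticity are routine: on a horizontal line one has $|\psi(\eta)|=O(|\eta|^2)$, hence $h(\eta)=O(\ln|\eta|)$, while the kernel is $O(|\eta|^{-2})$ for fixed $\xi$, so the integrand is $O(\ln|\eta|\,|\eta|^{-2})$ and integrable; differentiation under the integral sign gives analyticity of $\log\Phi^+$ in $\{\Im\xi>\omm\}$ and of $\log\Phi^-$ in $\{\Im\xi<\omp\}$. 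Using analyticity of $h$ in the strip together with the decay as $\Re\eta\to\pm\infty$, Cauchy's theorem shows the integrals do not depend on the choice of the horizontal contour in $(\mum,\mup)$.

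The core computation is the product identity. Choosing $\omm<0<\omp$ and $\omm<\Im\xi<\omp$, I would add the two exponents; since \eq{phip1} integrates along $\Im\eta=\omm$ and \eq{phim1} along $\Im\eta=\omp$ (both left-to-right, the latter with a minus sign), the sum is $\frac1{2\pi i}$ times the counterclockwise boundary integral of $\frac{\xi h(\eta)}{\eta(\xi-\eta)}$ over the strip $\{\omm<\Im\eta<\omp\}$. By the residue theorem the only poles inside are $\eta=0$, with residue $h(0)=0$, and $\eta=\xi$, with residue $-h(\xi)$; hence $\log\Phi^+(\xi)+\log\Phi^-(\xi)=-h(\xi)$, that is $\Phi^+(\xi)\Phi^-(\xi)=q/(q+\psi(\xi))=\phipq(\xi)\phimq(\xi)$ by \eq{whf0}. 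By contour independence this persists for all admissible $\omm,\omp\in(\mum,\mup)$.

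It remains to separate the two factors, and this is where the real work lies. The functions $\Phi^+$ and $\phipq$ are both analytic and non-vanishing in $\{\Im\xi>\mum\}$ (for $\phipq$ the non-vanishing in its half-plane is a standard property of the Wiener--Hopf factors; $\Phi^+$ is an exponential); $\Phi^+$ tends to a finite nonzero limit at infinity, whereas $\phipq$ is merely bounded there by Lemma \ref{phipmq_anal_cont}(iv), and similarly for $\Phi^-,\phimq$ in $\{\Im\xi<\mup\}$. Consequently $G:=\phipq/\Phi^+$ on $\{\Im\xi>\mum\}$ and $G:=\Phi^-/\phimq$ on $\{\Im\xi<\mup\}$ agree on the overlap strip (by the product identity) and define a single entire non-vanishing function; matching the logarithmic growth rates of $\phipq$ and $\Phi^+$ at infinity shows $\ln|G|$ is bounded on $\bC$, so $G$ is constant by Liouville, and $G(0)=1$ forces $G\equiv1$, i.e. $\Phi^\pm=\phi^\pm_q$. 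The main obstacle is precisely this matching of the decay of $\phi^\pm_q$ at infinity with that of $\Phi^\pm$, together with the a priori non-vanishing of $\phi^\pm_q$; to bypass it, the formulas follow instead from the Pecherskii--Rogozin/Baxter representation $\phipq(\xi)=\exp\big(\int_0^\infty t^{-1}e^{-qt}\int_{(0,\infty)}(e^{i\xi x}-1)\,\bP(X_t\in dx)\,dt\big)$ by Fourier-representing $\bfo_{(0,+\infty)}$, interchanging integrals, and using $\int e^{i\eta x}\bP(X_t\in dx)=e^{-t\psi(\eta)}$ with a Frullani integral to recover $\ln(1+\psi/q)$.
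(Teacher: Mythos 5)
Your proposal is essentially correct, and the route you settle on in your final sentence is, in fact, \emph{the} proof: the paper itself does not prove \lemm{phipmq_explicit1} but quotes it from \cite{NG-MBS,single,paraLaplace,paired,Contrarian}, and the argument in those sources is exactly the Baxter--Donsker/Pecherskii--Rogozin representation $\phipq(\xi)=\exp\bigl[\int_0^\infty t^{-1}e^{-qt}\int_{(0,\infty)}(e^{i\xi x}-1)\,\bP(X_t\in dx)\,dt\bigr]$ combined with the Fourier representation of the half-line restriction and Frullani's integral. To make that sketch a proof, two points need writing out. First, for $\omm<\min\{0,\Im\xi\}$ one has $(e^{i\xi x}-1)\bfo_{(0,\infty)}(x)=-\frac{1}{2\pi i}\int_{\Im\eta=\omm}e^{i\eta x}\frac{\xi}{\eta(\xi-\eta)}\,d\eta$, and after integrating against $\bP(X_t\in dx)$ the naive $t$-integral of $t^{-1}e^{-qt}e^{-t\psi(\eta)}$ diverges at $t=0$; you must first insert the missing ``$-1$'' using $\int_{\Im\eta=\omm}\frac{\xi}{\eta(\xi-\eta)}\,d\eta=0$ (the residues at $\eta=0$ and $\eta=\xi$ cancel), after which Frullani gives $\int_0^\infty t^{-1}e^{-qt}\bigl(e^{-t\psi(\eta)}-1\bigr)dt=-\ln(1+\psi(\eta)/q)$, yielding \eq{phip1} with the correct sign and branch (note $\Re(q+\psi(\eta))\ge q+\psi(i\omm)>0$ on the contour for $q\ge\sg$, by convexity of $\ga\mapsto-\psi(i\ga)$, so the logarithm is the single-valued branch you fixed by $h(0)=0$). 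Second, the interchange of integrals is justified by $|e^{i\eta x}|=e^{-\omm x}$ on the contour and $\bE[e^{-\omm X_t}]=e^{-t\psi(i\omm)}<\infty$, which is precisely where the exponential-moment hypothesis of \lemm{phipmq_anal_cont} enters; \eq{phim1} follows by the mirror argument, and contour independence within $(\mum,\mup)$ by Cauchy's theorem, as you note.

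You were right to distrust your first (Plemelj $+$ Liouville) route, but for a stronger reason than the one you give: the claim that $\Phi^+$ tends to a finite nonzero limit at infinity is false in general. Since $h(\eta)=\ln(1+\psi(\eta)/q)$ grows like $\nu\ln|\eta|$ along the contour, $\log\Phi^+(\xi)$ grows like a negative multiple of $\ln|\xi|$ as $\xi\to\infty$ in the upper half-plane, so $\Phi^+$ typically exhibits power-like decay --- consistent with \eq{WHFdecayP}; already for Brownian motion $\phipq(\xi)\asymp|\xi|^{-1}$. Moreover, under the weak hypotheses of \lemm{phipmq_anal_cont} there are no a priori two-sided polynomial bounds on $\phi^\pm_q$ at infinity, so the Liouville/growth-matching step cannot be run without the additional structural assumptions available for RLPE- or SINH-regular classes. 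Your residue computation establishing $\Phi^+(\xi)\Phi^-(\xi)=q/(q+\psi(\xi))$ via \eq{whf0}, and the observation that the factor $\xi/(\eta(\xi-\eta))$ encodes the subtraction at $\eta=0$ enforcing $\Phi^\pm(0)=\phi^\pm_q(0)=1$, are correct and serve as a useful consistency check, but the identification $\Phi^\pm=\phi^\pm_q$ must go through the probabilistic representation, exactly as you conclude.
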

The integrands above decay very slowly at infinity, hence,  fast and accurate numerical realizations are impossible unless additional tricks
are used. If $X$ is SINH-regular, the rate of decay can be greatly increased 
 using appropriate conformal deformations of the line of integration
 and the corresponding changes of variables. Assuming that in Definition \ref{def:SINH_reg_proc_1D0},  $\ga_\pm$ are not extremely small in absolute value (and, in the case of  regular SL-processes, $\ga_\pm=\pm \pi/2$ are not small), the most efficient change of variables
is the sinh-acceleration 
   \bbe\label{eq:sinh}
 \eta=\chi_{\om_1,b,\om}(y)=i\om_1+b\sinh(i\om+y), 
\ee
where $\om\in (-\pi/2,\pi/2)$, $\om_1\in \bR, b>0$.
Typically, the sinh-acceleration is the best choice even if $|\ga_\pm|$ are of the order of $10^{-5}$.  The parameters $\om_1,b,\om$ are chosen so that the contour $\cL_{\om_1,b,\om}:=\chi_{\om_1,b,\om}(\bR)\subset i(\mup,\mup)+\cC_{\gam,\gap}$ and, in the process of deformation, $\ln(1+\psi(\eta)/q)$ is a well-defined analytic function on a domain in $\bC$ or an appropriate Riemann surface.  \begin{lem}\label{lem:WHF-SINH}
Let $X$ be SINH-regular of type $((\mum,\mup), \cC_{\gam,\gap}, \cC_{\gampr,\gappr})$. 

Then  there exists $\sg>0$ s.t. for all $q>\sg$, 
\begin{enumerate}[(i)]
\item
$\phipq(\xi)$ admits analytic continuation to $i(\mum,+\infty)+i(\cC_{\pi/2-\gam}\cup\{0\})$. For any $\xi\in i(\mum,+\infty)+i(\cC_{\pi/2-\gam}\cup\{0\})$, and any contour 
$\cL^-_{\om_1,b,\om}\subset i(\mum,\mup)+(\cC_{\gam,\gap}\cup\{0\})$ lying below $\xi$,
\bbe\label{phipq_def}
\phipq(\xi)=\exp\left[\frac{1}{2\pi i}\int_{\cL^-_{\om_1,b,\om}}\frac{\xi \ln (1+\psi(\eta)/q)}{\eta(\xi-\eta)}d\eta\right];
\ee
\item
$\phimq(\xi)$ admits analytic continuation to $i(-\infty,\mup)-i(\cC_{\pi/2+\gap}\cup\{0\})$. For any $\xi\in i(\-\infty,\mup)-i(\cC_{\pi/2+\gap}\cup\{0\})$, and any contour 
$\cL^+_{\om_1,\om,b}\subset i(\mum,\mup)+(\cC_{\gam,\gap}\cup\{0\})$ lying above $\xi$,
\bbe\label{phimq_def}
\phimq(\xi)=\exp\left[-\frac{1}{2\pi i}\int_{\cL^+_{\om_1,\om,b}}\frac{\xi \ln (1+\psi(\eta)/q)}{\eta(\xi-\eta)}d\eta\right].
\ee
\end{enumerate}
\end{lem}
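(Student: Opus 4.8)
The plan is to take the explicit line-integral representation of \lemm{phipmq_explicit1} as the starting point and to perform two successive contour manipulations: first deform the horizontal line $\{\Im\eta=\omm\}$ in \eq{phip1} onto the sinh-contour $\cL^-_{\om_1,b,\om}$, and then let $\xi$ range over all of $i(\mum,+\infty)+i(\cC_{\pi/2-\gam}\cup\{0\})$ while keeping the contour below it. Both steps are applications of Cauchy's theorem; the real content of the lemma is that SINH-regularity makes the integrand holomorphic on a domain wide enough to carry out these deformations, with enough decay at infinity to justify discarding the connecting arcs. I will treat (i) in detail and indicate at the end why (ii) is identical.

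First I would pin down the integrand $F_\xi(\eta)=\xi\ln(1+\psi(\eta)/q)/(\eta(\xi-\eta))$ and its domain of holomorphy. By the representation \eq{eq:reprpsi} and Definition \ref{def:SINH_reg_proc_1D0}, $\psi$ continues analytically to the closure of $i(\mum,\mup)+(\cC_{\gam,\gap}\cup\{0\})$; since $\psi(0)=0$, the map $\eta\mapsto\ln(1+\psi(\eta)/q)/\eta$ extends holomorphically across $\eta=0$, so inside the cone the only singularity of $F_\xi$ is the simple pole at $\eta=\xi$. The crux is to fix $\sg$ (at least as large as in \lemm{phipmq_anal_cont}, and enlarged if necessary) so that for every $q>\sg$ the quantity $1+\psi(\eta)/q$ omits the slit $(-\infty,0]$ throughout this closed region, making the principal branch of the logarithm single-valued and analytic. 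On a bounded piece of the cone this follows from the lower bound $|q+\psi|\ge c$ of \lemm{phipmq_anal_cont}(ii) (proved on the strip by compactness, and extended to the bounded piece the same way), which for large $q$ pushes $1+\psi/q$ into a small neighbourhood of $1$; on the arms it follows from \eq{asympsisRLPE}, because $c_\infty(\varphi)\in\bC\setminus(-\infty,0]$ and $\varphi\mapsto c_\infty(\varphi)$ is continuous on the compact set of admissible arm-directions, so $\arg(1+\psi(\eta)/q)$ stays bounded away from $\pm\pi$ as $|\eta|\to\infty$ within $\cC_{\gam,\gap}$.

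With $F_\xi$ holomorphic on this region and $\xi$ above the contour, I would then deform $\{\Im\eta=\omm\}$ to $\cL^-_{\om_1,b,\om}$. Writing $\eta=\chi_{\om_1,b,\om}(y)$, the contour escapes to infinity inside $\cC_{\gam,\gap}$ along the directions $\om$ and $\pi-\om$, where $|\eta|\asymp e^{|y|}$ and, by \eq{asympsisRLPE}, $\ln(1+\psi(\eta)/q)=O(\ln|\eta|)$. Since $\eta(\xi-\eta)\asymp|\eta|^2$ and $|d\eta|\asymp|\eta|\,dy$, the integrand times $d\eta$ is $O(|y|e^{-|y|})$, so the integral over $\cL^-$ converges absolutely and the circular arcs (of radius $R$, spanning the angular sector between the horizontal direction and $\om$, which lies in $\cC_{\gam,\gap}$) contribute $O(R^{-1}\ln R)\to 0$. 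Cauchy's theorem then equates the two integrals, the homotopy staying inside the domain of holomorphy and never meeting $\eta=\xi$ because $\cL^-\subset i(\mum,\mup)+(\cC_{\gam,\gap}\cup\{0\})$ is assumed to lie below $\xi$.

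Finally, for a fixed admissible contour the right-hand side of \eq{phipq_def} is holomorphic in $\xi$ throughout the component lying above $\cL^-$ (there the pole $\eta=\xi$ is avoided and the decay bound licenses differentiation under the integral sign). Sweeping $\cL^-$ downward and outward through the cone yields a family of such regions whose union covers $i(\mum,+\infty)+i(\cC_{\pi/2-\gam}\cup\{0\})$, and on overlaps the values agree by the deformation step (two admissible contours both below $\xi$ are homotopic in the domain without crossing $\eta=\xi$); hence \eq{phipq_def} defines a single-valued analytic continuation of $\phipq$. I expect the main obstacle to be precisely the uniform branch control of the second paragraph: showing that one $\sg$ works for the whole family of contours and keeps $1+\psi(\eta)/q$ off $(-\infty,0]$ simultaneously on the arms — where for $\nu<1$ the drift term $-i\mu\eta$ of \eq{eq:reprpsi} competes with $\psi^0$ and the argument of $1+\psi(\eta)/q$ must be tracked carefully — and across the bounded transition region; the rest is standard contour manipulation. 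Statement (ii) is obtained verbatim from \eq{phim1} by deforming to a contour $\cL^+$ lying \emph{above} $\xi$ and letting $\xi$ range over the reflected region $i(-\infty,\mup)-i(\cC_{\pi/2+\gap}\cup\{0\})$.
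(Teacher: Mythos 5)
Your argument is correct and coincides with the paper's intended proof: the paper states this lemma without a written proof, treating it as the standard Cauchy deformation of the representations in \lemm{phipmq_explicit1} onto sinh-contours, with exactly the provisos you isolate (the discussion preceding the lemma and Remark \ref{rem:SL-WHF} require that $1+\psi(\eta)/q$ stay off $(-\infty,0]$, or at least that $q+\psi(\eta)\neq 0$ so the logarithm continues along the homotopy). Your decay estimates on the wings ($O(|y|e^{-|y|})$ after the change of variables, $O(R^{-1}\ln R)$ on the arcs) and your flagged branch-control obstacle for $\nu<1$ with $\mu\neq 0$ are precisely the points the paper addresses via the enlarged $\sg$, Lemma \ref{lem:atoms} and Section \ref{s:SL_vinvar}, so the reconstruction is faithful.
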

See Fig. \ref{fig:TwoGraphsandStrip} for an example of the curves $\cL^\pm_{\om_1,b,\om}$.
The integrals are efficiently evaluated making the change of variables $\xi=\chi_{\om_1,b,\om}(y)$ and applying the simplified trapezoid rule.

\begin{rem}\label{rem:SL-WHF}{\rm 
In the process of deformation, the expression $1+\psi(\xi)/q$ may not assume value zero. In order to avoid complications stemming from analytic continuation to an appropriate Riemann surface, it is advisable to ensure that $1+\psi(\xi)/q\not\in(-\infty,0]$.
 Thus, if $q>0$ - and only positive $q$'s are used in the Gaver-Stehfest method or GWR algorithm - and $X$ is a SL-process, 
any $\om\in (0,\pi/2)$ is admissible in \eq{phipq_def}, and any $\om\in (-\pi/2,0)$ is admissible in \eq{phimq_def}. 
If the sinh-acceleration is applied to the Bromwich integral, then additional conditions on $\om$ must be imposed. See 
Sect. \ref{ss:SINH-Bromwich}.
}
\end{rem}
\begin{rem}\label{rem:apmq}{\rm In the remaining part of the paper, we assume that the Wiener-Hopf factors $\phi^\pm_q(\xi), q>0,$ admit
the representations $\phi^\pm_q(\xi)=a^\pm_q+\phi^{\pm\pm}_q(\xi)$ and $\cE^\pm_q=a^\pm_qI+\cE^{\pm\pm}_q$,
where $a^\pm_q\ge 0$, and $\phi^{\pm\pm}_q(\xi)$ satisfy the bounds
\beqa\label{WHFdecayP}
 |\phi^{+,+}_q(\xi)|&\le & C_+(q)(1+|\xi|)^{-\nup},\ \Im \xi\ge \mum,\\
 \label{WHFdecayM}
 |\phi^{-,-}_q(\xi)|&\le & C_+(q)(1+|\xi|)^{-\num},\ \Im \xi\le \mup,
 \eqa
 where $\nu_\pm>0$ and $C_\pm(q)>0$ are independent of $\xi$. These conditions are satisfied for all popular classes of L\'evy processes
 bar the driftless Variance Gamma model.
See Sect. \ref{WHF_decomp} for details.
}
\end{rem}

\section{Expectations of functions of the L\'evy process and its extremum}\label{exp_Levy_extremum}

\subsection{Main  theorems}\label{ss:main_theorems}
Let $f$ be measurable and uniformly bounded on $U_+:=\{(x_1, x_2)\ |\ x_2\ge 0, x_1\le x_2\}$. 
Fix 
$(x_1,x_2)\in U_+$. The function
$\bR_+\ni T\mapsto V(f;T;x_1,x_2)$ is measurable and uniformly bounded, hence, $\tV(f;q;x_1,x_2)$, the Laplace transform of $V(f;T;x_1,x_2)$ w.r.t. $T$, is a well-defined analytic function of $q$ in the right half-plane.  Assuming  that $\tV(f;q;x_1,x_2)$ is sufficiently regular, $V(f;T;x_1,x_2)$
can be represented by the Bromwich integral
\bbe\label{tVBrom}
V(f;T;x_1,x_2)=\frac{1}{2\pi i}\int_{\Re q=\sg}e^{qT}\tV(f;q;x_1,x_2)\,dq,
\ee
where $\sg>0$ is arbitrary.  We derive an analytical representation for
\[
 \tV(f;q;x_1,x_2)=q^{-1}\bE[f(x_1+X_{T_q}, \max\{x_2, x_1+\barX_{T_q}\})],
 \]
 where $q>0$ and $T_q$ is an exponentially distributed random variable of mean $1/q$, independent of $X$,
 and prove that the resulting expression for $\tV(f;q;x_1,x_2)$ admits analytic continuation to the right half-plane.  One can impose additional general conditions on $X$ and $f$ which ensure that $\tV(f;q;x_1,x_2)$ is sufficiently regular
 so that \eq{tVBrom} holds. Such general conditions are either too messy or exclude some natural examples, for which the regularity can be established on the case-by-case basis. A standard trick which is used in \cite{NG-MBS,barrier-RLPE,BIL} is as follows. Firstly, \eq{tVBrom} holds in the sense of generalized functions. One integrates by parts
 in \eq{tVBrom}, and proves that the derivative  $\tV_q(f;q;x_1,x_2)$ is of class $L_1$ as a function of $q$. Hence, $V(f;T;x_1,x_2)$ 
equals the RHS of \eq{tVBrom} with $-T^{-1}\tV_q(f;q;x_1,x_2)$ in place of $\tV(f;q;x_1,x_2)$. After that, one proves that it is possible to integrate by parts back and obtain \eq{tVBrom} for 
$T>0$.  In examples that we consider, the integrands are of essentially the same form as in \cite{NG-MBS,barrier-RLPE,BIL} for 
barrier options, 
and enjoy all properties that are used
in \cite{NG-MBS,barrier-RLPE,BIL} to justify \eq{tVBrom}. 

In the theorem below, $I$ denotes the identity operator, $f_+$ is the extension of $f$ to $\bR^2$ by zero, and $\De$ is the diagonal map: $\De(x)=(x,x)$.

 \begin{thm}\label{thm:X_barX_exp}
 Let $X$ be a L\'evy process on $\bR$, $q>0$, and let $f:U_+\to \bR$ be a measurable and uniformly bounded  function
s.t.   $((\cEmq\otimes I)f)\circ \De:\bR\to\bR$ is measurable.
 Then 
 \begin{enumerate}[(i)]
  \item
 for any $x_1\le x_2$,
 \beqa\label{tVq0}
 q\tV(f;q;x_1,x_2)&=&((\cEq\otimes I)f_+)(x_1,x_2)+(\cEpq w(f;q,\cdot, x_2))(x_1),
 \eqa
 where 
 \bbe\label{eq:wqVtq}
 w(f;q,y,x_2)=\bfo_{[x_2,+\infty)}(y)(((\cEmq\otimes I)f_+)(y,y)-((\cEmq\otimes I)f_+)(y,x_2));
 \ee
  \item
   the RHS' of \eq{tVq0} and \eq{eq:wqVtq} admit analytic continuation w.r.t. $q$ to the right half-plane.
   \end{enumerate}
   
 \end{thm}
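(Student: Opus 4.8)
The plan is to read off the probabilistic content of the Laplace transform and then feed it into the Wiener--Hopf factorization. Since $T_q$ is exponential of rate $q$ and independent of $X$, averaging $V(f;\cdot;x_1,x_2)$ against its density gives, for $q>0$, the identity $q\tV(f;q;x_1,x_2)=\bE[f(x_1+X_{T_q},\max\{x_2,x_1+\barX_{T_q}\})]$ recorded just before the theorem. The engine is the Greenwood--Pitman decomposition of \lemm{l:deep}: I would write $X_{T_q}=\barX_{T_q}+(X_{T_q}-\barX_{T_q})$, where by part (a) the maximum $\barX_{T_q}$ and the overshoot $X_{T_q}-\barX_{T_q}$ are independent, and by part (b) the overshoot is equal in law to $\uX_{T_q}$.

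Next I would condition on the value $m=\barX_{T_q}\ge 0$. With $m$ frozen, the second coordinate $\max\{x_2,x_1+m\}$ is a constant, and the expectation over the overshoot (independent of $\barX_{T_q}$ and distributed as $\uX_{T_q}$) becomes an application of $\cEmq$ in the first variable. A small but essential check is that the random point $(x_1+m+(X_{T_q}-\barX_{T_q}),\,\max\{x_2,x_1+m\})$ always lies in $U_+$: its second coordinate is $\ge x_2\ge 0$, and, since the overshoot is $\le 0$, its first coordinate is $\le x_1+m\le\max\{x_2,x_1+m\}$. Hence $f$ equals $f_+$ there, and the conditional expectation equals $((\cEmq\otimes I)f_+)(x_1+m,\max\{x_2,x_1+m\})$. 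Splitting according to whether $x_1+m<x_2$ or $x_1+m\ge x_2$ and writing $y=x_1+m$ is then a purely algebraic rewriting in terms of $f_+$ (which is defined on all of $\bR^2$): it produces $((\cEmq\otimes I)f_+)(y,x_2)$ plus the correction term $w(f;q,y,x_2)$ of \eq{eq:wqVtq}. Finally I would average over $\barX_{T_q}$. The map $y\mapsto((\cEmq\otimes I)f_+)(y,x_2)$ is $\cEmq$ applied to the fixed bounded function $f_+(\cdot,x_2)$, so averaging it over $x_1+\barX_{T_q}$ gives $(\cEpq\cEmq f_+(\cdot,x_2))(x_1)$, which by the operator Wiener--Hopf factorization \eq{eq:operWHF} on $L_\infty(\bR)$ equals $((\cEq\otimes I)f_+)(x_1,x_2)$; averaging the correction term gives $(\cEpq w(f;q,\cdot,x_2))(x_1)$. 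This is \eq{tVq0}. (The hypothesis that $((\cEmq\otimes I)f)\circ\De$ is measurable is used precisely here, to guarantee that $w(f;q,\cdot,x_2)$ is measurable so that $\cEpq w(f;q,\cdot,x_2)$ is well defined.)

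For part (2) I would work from the resolvent representation valid for any bounded Borel $u$, namely $\cEq u(x)=q\int_0^\infty e^{-qt}\bE[u(x+X_t)]\,dt$, and the analogues for $\cEpq,\cEmq$ with $\barX_t,\uX_t$; each such integral is absolutely convergent and analytic in $q$ on $\{\Re q>0\}$ because $u$ is bounded. The first summand $((\cEq\otimes I)f_+)(x_1,x_2)=(\cEq f_+(\cdot,x_2))(x_1)$ is then immediately analytic there.

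The part I expect to be the real obstacle is the analytic continuation of the second summand, because $w(f;q,\cdot,x_2)$ carries its own $q$ through the inner operator $\cEmq$, so $(\cEpq w(f;q,\cdot,x_2))(x_1)$ is a genuinely nested, double exponential-clock expression rather than a single resolvent. I would control it through the uniform bound $|w(f;q,y,x_2)|\le 2\|f\|_\infty\,|q|/\Re q$, valid for all $y$ from the resolvent representation, together with analyticity of $q\mapsto w(f;q,y,x_2)$ for each fixed $y$. Writing $(\cEpq w(f;q,\cdot,x_2))(x_1)=q\int_0^\infty e^{-qt}\bE[w(f;q,x_1+\barX_t,x_2)]\,dt$, on any compact subset of $\{\Re q>0\}$ (where $\Re q\ge\delta>0$ and $|q|\le M$) the integrand is analytic in $q$ and dominated by $2\|f\|_\infty (M/\delta)e^{-\delta t}$; Fubini then lets me pass $\bE$ through the $t$-integral, and Morera's theorem (equivalently, differentiation under the integral sign) yields analyticity of the $t$-integral, hence of the whole expression, on $\{\Re q>0\}$. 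This establishes \eq{tVq0} and \eq{eq:wqVtq} as analytic functions of $q$ in the right half-plane.
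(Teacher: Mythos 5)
Your proposal is correct and follows essentially the same route as the paper's proof: the Greenwood--Pitman decomposition of \lemm{l:deep} plus Fubini to pull $\cEmq$ into the first variable, the split according to $x_1+\barX_{T_q}\gtrless x_2$ producing the correction term $w$ of \eq{eq:wqVtq}, and the operator factorization \eq{eq:operWHF} identifying the first term as $((\cEq\otimes I)f_+)(x_1,x_2)$, with your check that the random point stays in $U_+$ (overshoot $\le 0$, second coordinate $\ge x_2\ge 0$) making explicit why $f$ may be replaced by $f_+$. The only divergence is in part (2), where the paper disposes of analyticity in one sentence by citing the analytic continuation of $\cE^\pm_q$ on bounded functions, while you justify it in detail via the resolvent representation, the bound $|w(f;q,y,x_2)|\le 2\|f\|_\infty|q|/\Re q$, Fubini and Morera --- a sound elaboration of the same idea that correctly isolates the nested $q$-dependence inside $w$ as the point needing care.
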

 \begin{proof}   By definition, part (a) of Lemma \ref{l:deep} amounts to the statement that the
probability distribution of the $\bR^2$-valued random variable
$(\barX_{T_q}, X_{T_q}-\barX_{T_q})$ is equal to the product (in the sense
of ``product measure'') of the distribution of $\barX_{T_q}$ and the
distribution of $X_{T_q}-\barX_{T_q}$. Applying Fubini's theorem and then part (b),
 we derive for $x_1\le x_2$
\beqast
&&\bE[f_+(x_1+X_{T_q}, \max\{x_2, x_1+\barX_{T_q}\})]\\
&=&\bE[f_+(x_1+X_{T_q}-\barX_{T_q}+\barX_{T_q}, \max\{x_2, x_1+\barX_{T_q}\})]
\\ &=&
\bE[((\cEmq\otimes I)f_+)(x_1+\barX_{T_q}, \max\{x_2, x_1+\barX_{T_q}\})]
\\
&=&\bE[((\cEmq\otimes I)f_+)(x_1+\barX_{T_q}, x_2)]
\\
&&+
\bE[\bfo_{x_1+\bar X_{T_q}\ge  x_2}(((\cEmq\otimes I)f_+)(x_1+\barX_{T_q}, x_1+\barX_{T_q})-((\cEmq\otimes I)f_+)(x_1+\barX_{T_q}, x_2))].
\eqast
Using \eq{eq:operWHF}, we write the first term on the rightmost side as $((\cEq\otimes I)f_+)(x_1,x_2)$; the second term is the second term on the RHS of \eq{tVq0}, which
finishes the proof of (i). As operators acting in the space of bounded measurable functions, $\cE^\pm_q$ 
admit analytic continuation w.r.t. $q$ to the right half-plane,  which proves (ii). \end{proof} 

\begin{rem}{\rm  The inverse Laplace transform of $q^{-1}(\cEq\otimes I)f_+(x_1,x_2)$ equals $\bE[f(x_1+X_T,x_2)]$,
and, therefore, can be easily calculated using the Fourier transform technique and sinh-acceleration \cite{SINHregular}. Essentially, we have the price of the
European option of maturity $T$, the riskless rate being 0, depending on $x_2$ as a parameter. Thus, the new element is the calculation of the second term on the RHS of \eq{tVq0}. We calculate both terms in the same manner in order to facilitate the explanation of various blocks of our method.}
 \end{rem}
 \begin{thm}\label{thm:X_barX_exp_2}
 Let   a L\'evy process $X$ on $\bR$, function $f:U_+\to \bR$ and real $q>0$ satisfy the following conditions \begin{enumerate}[(a)]
 \item
there exist $\mum\le 0\le \mup$ such that  $\forall$\ $\ga\in [\mum,\mup]$, $\bE[e^{-\ga X_1}]<\infty$ and $q+\psi(i\ga)>0$;
 \item
 $f$ is a measurable function admitting the bound
 \bbe\label{simple_bound_f_X_barX1}
 |f(x_1,x_2)|\le C(x_2)e^{-\mup x_1}, 
 \ee
 where $C(x_2)$ is independent of $x_1\le x_2$;
 \item
$((\cEmq\otimes I)f)\circ \De$ is a measurable admitting the bound
 \bbe\label{simple_bound_f_X_barX2}
|((\cEmq\otimes I)f)(x_1,x_1)| \le Ce^{-\mum x_1} , 
 \ee
 where $C$ is independent of $x_1\ge 0$.

 \end{enumerate}
 Then the statements (i)- (iii) of Theorem \ref{thm:X_barX_exp} hold.
 
 \end{thm}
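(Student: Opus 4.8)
The plan is to reproduce, identity by identity, the computation in the proof of \theor{thm:X_barX_exp}, while certifying at each occurrence that the function to which an EPV operator is applied lies in the weighted space $L_\infty(\bR;w)$ on which that operator is bounded. Condition (a) is exactly the hypothesis of \lemm{phipmq_anal_cont}, whose item (v) then asserts that $\cEpq$ and $\cEmq$ are bounded in $L_\infty(\bR;w)$ for every weight $w(x)=e^{\ga x}$, $\ga\in[\mum,\mup]$, and for $w(x)=\min\{e^{\mum x},e^{\mup x}\}$; the composition $\cEq=\cEpq\cEmq$ is then bounded in the same spaces, and the positivity $q+\psi(i\ga)>0$ keeps the resolvent and the factors $\phi^\pm_q$ well defined. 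The structural point I will exploit is the opposite directionality of the two factors: $\cEmq$ shifts its argument by $\uX_{T_q}\le 0$ and hence tolerates growth as $x\to-\infty$ measured by $e^{\mup x}$, whereas $\cEpq$ shifts by $\barX_{T_q}\ge 0$ and tolerates growth as $x\to+\infty$ measured by $e^{\mum x}$. Bounds (b) and (c) are tailored to feed each operator precisely the admissible growth.

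First I would justify the conditioning identity underlying the opening lines of the computation, namely that
\[
\bE[f_+(x_1+X_{T_q},\max\{x_2,x_1+\barX_{T_q}\})]=\bE[((\cEmq\otimes I)f_+)(x_1+\barX_{T_q},\max\{x_2,x_1+\barX_{T_q}\})].
\]
By \lemm{l:deep} the pair $(\barX_{T_q},X_{T_q}-\barX_{T_q})$ has product law and $X_{T_q}-\barX_{T_q}$ is distributed as $\uX_{T_q}$; conditioning on $\barX_{T_q}$, the second argument is frozen and the inner integral becomes the action of $\cEmq$ in the first argument. Bound (b) places $f_+(\cdot,x_2')\in L_\infty(\bR;e^{\mup x})$ with norm $C(x_2')$ for each fixed second argument $x_2'$, so this inner integral converges absolutely for every value of $\barX_{T_q}$ and $((\cEmq\otimes I)f_+)(\cdot,x_2')$ inherits the bound $C'(x_2')e^{-\mup x_1}$. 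Splitting on $\{x_1+\barX_{T_q}\ge x_2\}$ and using $\cEq=\cEpq\cEmq$ from \eq{eq:operWHF} produces the two terms of \eq{tVq0}: the first, $((\cEq\otimes I)f_+)(x_1,x_2)$, is finite because (b) also places $f_+(\cdot,x_2)$ in $L_\infty(\bR;\min\{e^{\mum x},e^{\mup x}\})$, on which $\cEq$ is bounded; the second is $\cEpq$ applied to $w(f;q,\cdot,x_2)$ of \eq{eq:wqVtq}.

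The step I expect to be the real obstacle, and the one with no counterpart in the bounded case, is the convergence of this last outer application of $\cEpq$, since $\cEpq$ samples arbitrarily large values of its argument through $\barX_{T_q}$ and this is precisely where the unboundedness of $f$ on the diagonal is felt. Here condition (c) is indispensable: on the support $\{y\ge x_2\ge 0\}$ of $w(f;q,\cdot,x_2)$ the off-diagonal term $((\cEmq\otimes I)f_+)(y,x_2)$ is bounded by $C'(x_2)e^{-\mup y}\le C'(x_2)$ (as $\mup\ge 0$), while the diagonal term $((\cEmq\otimes I)f_+)(y,y)$ is controlled by $Ce^{-\mum y}$ by \eq{simple_bound_f_X_barX2}; since $-\mum\ge 0$, both are dominated by a multiple of $e^{-\mum y}$ on $\{y\ge 0\}$, so $w(f;q,\cdot,x_2)\in L_\infty(\bR;e^{\mum x})$ and $\cEpq w(f;q,\cdot,x_2)$ is finite. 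The delicate point is that (c) bounds the signed conditional mean $((\cEmq\otimes I)f_+)(y,y)=\bE[f_+(y+\uX_{T_q},y)]$ rather than $\bE[|f_+|(y+\uX_{T_q},y)]$; to keep the passage from the pointwise decomposition of the integrand to the expectation free of an $\infty-\infty$ ambiguity, I would justify the interchange by the standard device of applying \theor{thm:X_barX_exp} to the bounded truncations $f\wedge n\vee(-n)$ and passing to the limit, with \eq{simple_bound_f_X_barX1} and \eq{simple_bound_f_X_barX2} supplying the dominating bounds on the RHS. Finally, for the analytic-continuation statement I would argue as in \theor{thm:X_barX_exp}: under (a) the symbols $\phi^\pm_q(\xi)$, hence the operators $\cE^\pm_q$ on the above weighted spaces, depend analytically on $q$ in $\Re q>0$ because $q+\psi$ does not vanish on the relevant contours there, and since the weighted bounds are locally uniform in $q$, both terms on the right of \eq{tVq0} and the function \eq{eq:wqVtq} continue analytically to the right half-plane.
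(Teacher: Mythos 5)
Your proposal is correct and follows essentially the same route as the paper's own proof: truncate to bounded functions, apply Theorem \ref{thm:X_barX_exp} to the truncations, split as in \eq{eq:X_barX_exp2}, dominate the two terms by $C\cEpq e^{-\mum x_1}\le C_1\phipq(i\mum)e^{-\mum x_1}$ (via (a) and \eq{simple_bound_f_X_barX2}) and $C(x_2)\cEpq\cEmq e^{-\mup x_1}\le C_1(x_2)e^{-\mup x_1}$ (via \eq{simple_bound_f_X_barX1}), and pass to the limit using the positivity and boundedness of $\cE^\pm_q$ in the weighted spaces of Lemma \ref{phipmq_anal_cont}(v), with the same analytic-continuation argument for part (ii). The only differences are cosmetic: the paper first reduces to $f\ge 0$ and truncates one-sidedly via $f_n=\min\{n,f\}$ with monotone passage to the limit, whereas you truncate two-sidedly; your explicit flagging of the signed-mean subtlety in condition (c) is a point the paper dispatches with ``it suffices to consider non-negative $f$.''
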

 \begin{proof} It suffices to consider non-negative $f$. Define $f_n(x_1,x_2)=\min\{n, f(x_1,x_2)\}$, $n=1,2,\ldots$.
 By the dominated convergence theorem, $\tV(f_n;q;x_1,x_2)\uparrow \tV(f;q;x_1,x_2)$, a.s.,
 and since $f_n$ is bounded, \eq{tVq0} holds with $f_n$ in place of $f$.
 We rewrite \eq{tVq0} in the form
 \beqa\label{eq:X_barX_exp2}
 q \tV(f;q;x_1,x_2)&=&\bE[((\cEmq\otimes I)f_+)(x_1+\barX_{T_q}, \bar x_2)\bfo_{x_1+\bar X_{T_q}<\bar x_2}]\\\nonumber
&&+\bE[((\cEmq\otimes I)f_+)(x_1+\barX_{T_q}, x_1+\barX_{T_q})\bfo_{x_1+\bar X_{T_q}\ge \bar x_2}],
  \eqa
  and denote by $\tW_1(f_n;q;x_1,\bar x_2)+\tW_2(f_n;q;x_1, \bar x_2)$ the sum on
 the RHS of \eq{eq:X_barX_exp2} with $f_n$ in place of $f$. Fix $x_2$. On the strength of (a) and \eq{simple_bound_f_X_barX2},
$\tW_2(f_n;q;x_1, \bar x_2)$
admits a bound via $C\cEpq e^{-\mum x_1}=C\phipq(i\mum)e^{-\mum x_1}$, where $C$ is independent of $n$. On the strength of (a) and
\eq{simple_bound_f_X_barX1}, $\tW_2(f_n;q;x_1, \bar x_2)$
admits a bound via $C(\bar x_2)\cEpq\cEmq e^{-\mup x_1}=C_1(q,\bar x_2)e^{-\mup x_1}$, where $C_1(q,\bar x_2)$ is independent of $n$.
Operators $\cE^\pm_q$ being positive and bounded in $L_\infty$-spaces with weights $e^{\ga x}, \ga\in [-\mum,-\mum]$
(see Lemma \ref{phipmq_anal_cont}, (v)),
the limit of the RHSs of $\tW_1(f_n;q;x_1,x_2)+\tW_2(f_n;q;x_1,x_2)$ is finite  and equal  
 to the RHS of \eq{eq:X_barX_exp2}.
 
 \end{proof}
 
 \begin{rem}\label{rem:inf}{\rm 
 For functions of a L\'evy process and its running infimum, results are mirror reflections of the results for a L\'evy process and its supremum: change the direction of the real axis,
and flip the lower and upper half-plane and operators $\cE^\pm_q$.
 }\end{rem}
Let
$V(G; h; T; x)$ be the price of the barrier option with
the payoff $G(X_T)$ at maturity and no rebate if the barrier $h$ is crossed before or at time $n$; the rsikless rate is 0.
Applying Theorem   \ref{thm:X_barX_exp_2}, we obtain the formula for the price of the single barrier options, which is equivalent to
the formula derived
in \cite{KoBoL,NG-MBS,BLSIAM02,barrier-RLPE} for wide classes of L\'evy processes and generalized to all L\'evy processes in \cite{single}. The new version allows for more efficient numerical realizations.
\begin{thm}\label{thm:X_barX_exp_2_barr}
 Let   the L\'evy process $X$ on $\bR$ and $q>0$ satisfy condition (a) of Theorem \ref{thm:X_barX_exp_2}, and let
  $G$ be a measurable function admitting the bound
$ |G(x)|\le C(e^{-\mup x}+e^{-\mum x})$, where $C$ is independent of $x\in \bR$.
 Then, for $x<h$,
 \bbe\label{eq:price_barr}
 \tV(G;h;q,x)=q^{-1}(\cEq G)(x)-q^{-1}(\cEpq\bfo_{[h,+\infty)}\cEmq G)(x).
 \ee
 \end{thm}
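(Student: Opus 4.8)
The plan is to specialize Theorem \ref{thm:X_barX_exp_2} to the payoff that is killed as soon as the running maximum reaches the barrier. Concretely, I would take
\[
f(x_1,x_2)=G(x_1)\bfo_{(-\infty,h)}(x_2),
\]
so that, writing the process started at $x$ as $x+X_T$ with running maximum $x+\barX_T=\max\{x,x+\barX_T\}$, the knock-out payoff is $G(x+X_T)\bfo_{\{x+\barX_T<h\}}=f(x+X_T,\max\{x,x+\barX_T\})$; hence $V(G;h;T;x)=V(f;T;x,x)$ and it suffices to evaluate \eq{tVq0} at $x_1=x_2=x$. For $x<0$ the point $(x,x)$ lies outside $U_+$, but since $\cEq,\cEpq,\cEmq$ are convolution operators both sides of \eq{eq:price_barr} are covariant under the simultaneous shift $(x,h,G)\mapsto(x+a,h+a,G(\cdot-a))$, so I would reduce to the case $x+a\ge 0$ and translate back.

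Before applying the theorem I would verify hypotheses (b), (c), since (a) is assumed. From $|G(x)|\le C(e^{-\mup x}+e^{-\mum x})$ and $x_1\le x_2$ one has $e^{-\mum x_1}\le e^{(\mup-\mum)x_2}e^{-\mup x_1}$, which gives (b) with $C(x_2)=C(1+e^{(\mup-\mum)x_2})$. For (c) I would use that $\cEmq$ is bounded on the weighted spaces $L_\infty(\bR;e^{\ga x})$, $\ga\in[-\mup,-\mum]$ (Lemma \ref{phipmq_anal_cont}(v)), so that $|(\cEmq G)(x_1)|\le C'(e^{-\mup x_1}+e^{-\mum x_1})\le 2C'e^{-\mum x_1}$ for $x_1\ge 0$; since $((\cEmq\otimes I)f)(x_1,x_1)=\bfo_{(-\infty,h)}(x_1)(\cEmq G)(x_1)$, this yields (c) and its measurability.

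The computation is then short. As $\cEmq$ acts only in the first variable and the second variable enters $f$ only through $\bfo_{(-\infty,h)}$, one has $((\cEmq\otimes I)f)(y,z)=\bfo_{(-\infty,h)}(z)(\cEmq G)(y)$. Using $x<h$ and $\bfo_{(-\infty,h)}-1=-\bfo_{[h,+\infty)}$,
\[
w(f;q,y,x)=\bfo_{[x,+\infty)}(y)\big(\bfo_{(-\infty,h)}(y)-1\big)(\cEmq G)(y)=-\bfo_{[h,+\infty)}(y)(\cEmq G)(y),
\]
where the last step uses $[x,+\infty)\cap[h,+\infty)=[h,+\infty)$ because $x<h$. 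Applying $\cEpq$ gives the second term of \eq{tVq0} as $-(\cEpq\bfo_{[h,+\infty)}\cEmq G)(x)$, while the first term is $(\cEq G)(x)$ (as $\cEq\otimes I$ acts in the first variable and $\bfo_{(-\infty,h)}(x)=1$). Dividing by $q$ produces \eq{eq:price_barr}.

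The one point needing care — and the main obstacle — is the zero-extension $f_+$ in \eq{tVq0}: on $\{x_1>x_2\}$ the literal $f_+$ vanishes whereas $G(x_1)\bfo_{(-\infty,h)}(x_2)$ does not, so the individual terms of \eq{tVq0} evaluated with the honest $f_+$ are \emph{not} the clean expressions above. The safe way to handle this is to rerun the conditioning step from the proof of Theorem \ref{thm:X_barX_exp} for this particular $f$ rather than quoting the boxed formula. Conditioning on $\barX_{T_q}$ and folding the independent part $X_{T_q}-\barX_{T_q}\stackrel{d}{=}\uX_{T_q}$ into $\cEmq$ (Lemma \ref{l:deep}) gives directly
\[
q\tV(G;h;q,x)=\bE\big[\bfo_{\{x+\barX_{T_q}<h\}}(\cEmq G)(x+\barX_{T_q})\big]=(\cEpq\bfo_{(-\infty,h)}\cEmq G)(x),
\]
after which $\bfo_{(-\infty,h)}=1-\bfo_{[h,+\infty)}$ together with $\cEpq\cEmq=\cEq$ from \eq{eq:operWHF} finishes the proof. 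Equivalently, one checks that the surplus contributions from the two terms of \eq{tVq0} cancel. The growth bounds established above justify Fubini and the finiteness of all expectations throughout.
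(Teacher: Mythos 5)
Your proposal is correct and takes essentially the same route as the paper, which proves Theorem \ref{thm:X_barX_exp_2_barr} simply by specializing Theorem \ref{thm:X_barX_exp_2} to $f(x_1,x_2)=G(x_1)\bfo_{(-\infty,h)}(x_2)$ and leaving the computation implicit. Your added details --- the verification of hypotheses (b)--(c), the translation trick for $x<0$, and especially the observation that the zero-extension $f_+$ distorts the individual terms of \eq{tVq0}, which you correctly resolve by rerunning the conditioning step via Lemma \ref{l:deep} and \eq{eq:operWHF} --- are exactly the bookkeeping the paper omits, and they check out.
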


 \subsection{Integral representation of 
  the Laplace transform of the value function}\label{s:evalFT} 
In this Section, we assume that  $q>0$. The RHS' of the formulas for the Wiener-Hopf factors and  formulas that we derive below admit analytic continuation w.r.t. $q$ so that the inverse Laplace transform can be applied. 
We assume that the representations  $\cE^{\pm}_q=a^\pm_qI+\cE^{\pm,\pm}_q$  (see Remark \ref{rem:apmq} and Lemma \ref{lem:atoms}) hold. This excludes the driftless Variance Gamma model which requires a separate treatment.  Using the equality
\[
w(f;q,x_1, x_2)=\bfo_{[x_2,+\infty)}(x_1)(((\cEmq\otimes I)f_+)(x_1,x_1)-((\cEmq\otimes I)f_+)(x_1,x_2))=0,\quad x_1\le x_2,
\]
we write the second term on
the RHS of \eq{tVq0} as 
\bbe\label{tVq02}
(\cEpq w(f;q,\cdot, x_2))(x_1)=(\cE^{++}_q w(f;q,\cdot, x_2))(x_1).
\ee
Similarly, we rewrite  \eq{eq:wqVtq} as
\bbe\label{eq:wqVtq2}
w(f;q, y, x_2)=a^-_q w_0(y,x_2)+w^-(f;q, y, x_2),
\ee
where $w_0(y,x_2)=\bfo_{[x_2,+\infty)}(y)(f_+(y,y)-f_+(y,x_2))$, and 
\bbe\label{eq:wqVtqm}
w^-(f;q, y, x_2)=\bfo_{[x_2,+\infty)}(y)(((\cE^{--}_q \otimes I)f_+)(y,y)-((\cE^{--}_q \otimes I)f_+)(y,x_2)).
\ee
Substituting \eq{eq:wqVtq2} into \eq{tVq02}, we obtain
\bbe\label{tVq03}
(\cEpq w(f;q,\cdot, x_2))(x_1)
=c^-_q(\cE^{++}_q\otimes I) w_0)(x_1,x_2)+ ((\cE^{++}_q\otimes I) w^-)(f;q,x_1, x_2).
\ee
In order to derive explicit integral representations for the terms on the RHS of \eq{tVq03}, we impose
 the following conditions, which can be relaxed: 
\begin{enumerate}[(a)]
\item
condition (a) of Theorem \ref{thm:X_barX_exp_2} is satisfied;
\item
 there exist $\mumpr, \muppr\in(\mum,\mup)$, $\mumpr<\muppr$ such that $f$ admits bounds 
 \beqa\label{simple_bound_f_X_barX1pr}
 |f(x_1,x_2)|&\le & C(x_2)e^{-\muppr x_1}, \ x_1\le x_2,
 \\\label{simple_bound_f_X_barX2pr}
|((\cEmq\otimes I)f_+)(x_1,x_1)|&\le& Ce^{-\mumpr x_1} , \ x_1\in\bR,
 \eqa
  where $C(x_2)$ and $C$ are independent of $x_1\le x_2$, and $x_1\in\bR$, respectively; 
  \item
for any $x_2$, there exists $C(x_2)>0$ such that 
 \beqa\label{boundf1}
 |\widehat {(f_+)}_1(\xi_1,x_2)|&\le&  C(x_2)(1+|\xi_1|)^{-1}, \quad \xi_1\in S_{[\muppr,\mup]},\\\label{boundfw}
 |\widehat {(w_0)}_1(\eta,x_2)|&\le&  C(x_2)(1+|\eta|)^{-1}, \quad \eta\in S_{[\mum,\mumpr]};
 \eqa
 \item
 there exists $C>0$  such that for $\xi_1\in S_{[\muppr,\mup]}$ and $\xi_2\in S_{[\mum,\mumpr]}$, 
 \bbe\label{boundf2}
 |\widehat {(f_+)}(\xi_1,\xi_2)|\le C(1+|\xi_1|)^{-1}(1+|\xi_2|)^{-1}.
 \ee
\end{enumerate}

\begin{thm}\label{thm:mainFT} Let conditions (a)-(d) hold and let the representations of the Wiener-Hopf factors in 
Remark \ref{rem:apmq} be valid.
Then, for any $\om,\om_1, \om_2$ and $\omm$ satisfying
\bbe\label{ompmom1om2}
\om, \om_1\in (\muppr,\mup),\
   \om_2\in (\mum,\mumpr),\ \omm\in (\mum,\om_1+\om_2),
 \ee 
 and $x_1\le x_2$, we have
 \beqa\label{eq:tVqmain}
\tV(f;q;x_1,x_2)&=&\frac{1}{2\pi }\int_{\Im\xi=\om}\frac{e^{ix_1\xi_1}}{q+\psi(\xi_1)}\widehat{(f_+)}(\xi_1,x_2)d\xi_1
 \\\nonumber
 &&+\frac{a^-_q}{2\pi q}\int_{\Im\eta=\omm} e^{ix_1\eta}\phi^{++}_q(\eta)\widehat {(w_0)}_1(\eta,x_2) d\eta
 \\\nonumber
&&+\frac{1}{2\pi q}\int_{\Im\eta=\omm} e^{i(x_1-x_2)\eta}\phi^{++}_q(\eta)\widehat{w^-_0}(f;q,\eta,x_2)d\eta,
\eqa
where $\widehat{w^-_0}(f;q,\eta,x_2)$ is given by
\beqa\label{eq:wqtVq4}
&&
\widehat{w^-_0}(f;q,\eta,x_2)
\\\nonumber
&=&\frac{1}{2\pi}\int_{\Im\xi_1=\om_1} d\xi_1 \, 
\frac{e^{ix_2\xi_1}}{i(\xi_1-\eta)}\phi^{--}_q(\xi_1)(\widehat{f_+})_1(\xi_1,x_2)\\\nonumber
&&+\frac{1}{(2\pi)^2}\int_{\Im\xi_1=\om_1} \int_{\Im\xi_2=\om_2}d\xi_1\,d\xi_2\, \frac{e^{ix_2(\xi_1+\xi_2)}}{i(\eta-\xi_1-\xi_2)}
\phi^{--}_q(\xi_1)(\widehat{f_+})(\xi_1,\xi_2).
\eqa
\end{thm}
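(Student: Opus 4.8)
The plan is to take the operator identity \eq{tVq0} of \theor{thm:X_barX_exp}, together with the decomposition \eq{tVq03}, as the starting point: these already write $q\tV(f;q;x_1,x_2)$ as the sum of the ``European'' term $((\cEq\otimes I)f_+)(x_1,x_2)$, the atom term $a^-_q((\cE^{++}_q\otimes I)w_0)(x_1,x_2)$, and the ``lookback'' term $((\cE^{++}_q\otimes I)w^-)(x_1,x_2)$, with $w_0$ and $w^-$ given by \eq{eq:wqVtq2} and \eq{eq:wqVtqm}. The common mechanism is that $\cEq$, $\cE^{++}_q$ and $\cE^{--}_q$ are Fourier multipliers: from $\cE^\pm_q e^{ix\xi}=\phi^\pm_q(\xi)e^{ix\xi}$ and \eq{whf0} one gets $\cEq e^{ix\xi}=\frac{q}{q+\psi(\xi)}e^{ix\xi}$, and $\cE^{\pm\pm}_q$ has symbol $\phi^{\pm\pm}_q$ by Remark~\ref{rem:apmq}. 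I would therefore realize each term by inserting the appropriate (partial) inverse Fourier transform of $f_+$, $w_0$ or $w^-$ along a horizontal contour whose height is prescribed by the growth data (a)--(d), apply the multiplier, and finally divide by $q$.

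The first two terms are immediate. Applying $\cEq$ in the first slot and inserting the partial transform $\widehat{(f_+)}_1(\cdot,x_2)$ on $\Im\xi_1=\om$ gives the first integral of \eq{eq:tVqmain}, the factor $q$ from the symbol cancelling against $q^{-1}$; the prescription $\om\in(\muppr,\mup)$ is exactly what makes $\widehat{(f_+)}_1(\cdot,x_2)$ well defined and decaying, via \eq{simple_bound_f_X_barX1pr} and \eq{boundf1}, while $q/(q+\psi)$ is analytic and bounded on $S_{[\mum,\mup]}$ by \lemm{phipmq_anal_cont}. Similarly, applying $\cE^{++}_q$ to $w_0(\cdot,x_2)$ with its partial transform on $\Im\eta=\omm$ produces the second integral of \eq{eq:tVqmain}, with $\widehat{(w_0)}_1$ controlled by \eq{boundfw} and $\phi^{++}_q$ by \eq{WHFdecayP}.

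The substantive step is the third term, that is, the computation of $\widehat{w^-_0}(f;q,\eta,x_2)$, which is the partial transform of $w^-$ with the factor $e^{ix_2\eta}$ extracted, so that $e^{ix_1\eta}\widehat{w^-}_1=e^{i(x_1-x_2)\eta}\widehat{w^-_0}$. Writing $g(y,z):=((\cE^{--}_q\otimes I)f_+)(y,z)$, \eq{eq:wqVtqm} reads $w^-(f;q,y,x_2)=\bfo_{[x_2,+\infty)}(y)(g(y,y)-g(y,x_2))$, so after the substitution $y=x_2+u$ one has $\widehat{w^-_0}=\int_0^\infty e^{-iu\eta}(g(x_2+u,x_2+u)-g(x_2+u,x_2))\,du$, which I would treat in two pieces. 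For the $g(\cdot,x_2)$ piece I substitute the single partial inverse transform of $g(\cdot,x_2)$ on $\Im\xi_1=\om_1$ (multiplier $\phi^{--}_q(\xi_1)$), interchange the $\xi_1$- and $u$-integrals, and use $\int_0^\infty e^{iu(\xi_1-\eta)}\,du=i/(\xi_1-\eta)$, valid since $\om_1>\omm$; this yields the kernel $1/(i(\xi_1-\eta))$ and the first integral of \eq{eq:wqtVq4}. For the diagonal piece $g(\cdot,\cdot)$ I represent $(\cE^{--}_q\otimes I)f_+$ by the \emph{full} two-dimensional inverse transform on $\Im\xi_1=\om_1,\ \Im\xi_2=\om_2$, set $x_1=x_2=x_2+u$, interchange with the $u$-integral, and use $\int_0^\infty e^{iu(\xi_1+\xi_2-\eta)}\,du=i/(\xi_1+\xi_2-\eta)$, valid since $\omm<\om_1+\om_2$; this yields the kernel $1/(i(\eta-\xi_1-\xi_2))$ and the double integral of \eq{eq:wqtVq4}. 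Passing $\widehat{w^-_0}$ back through the $\cE^{++}_q$-multiplier on $\Im\eta=\omm$ then gives the third integral of \eq{eq:tVqmain}.

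The hard part will be the bookkeeping that makes all of this rigorous, not any single computation. Every interchange of integration order must be justified by absolute convergence, and this is precisely where the decay hypotheses \eq{boundf1}, \eq{boundfw}, \eq{boundf2} combine with the polynomial decay of $\phi^{\pm\pm}_q$ from Remark~\ref{rem:apmq}; the \emph{strict} inequalities $\omm<\om_1$ and $\omm<\om_1+\om_2$ are indispensable, since they are exactly the conditions under which the half-line $u$-integrals converge and deliver the Cauchy-type kernels instead of diverging. One must also verify that each contour sits inside the analyticity strip of its symbol ($\om_1<\mup$ for $\phi^{--}_q$, $\omm>\mum$ for $\phi^{++}_q$, $\om\in(\mum,\mup)$ for $q/(q+\psi)$), so that moving the reference lines of the inverse transforms to the heights prescribed in \eq{ompmom1om2} crosses no singularity; the joint solvability of all these constraints is exactly the content of the admissible range \eq{ompmom1om2}.
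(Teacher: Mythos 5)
Your proposal is correct and follows essentially the same route as the paper's proof: the same decomposition \eq{tVq0}/\eq{tVq03}, realization of $\cEq$, $\cE^{++}_q$, $\cE^{--}_q$ as Fourier multipliers, and the shift-and-Fubini computation of $\widehat{w^-_0}$ in which the half-line $y$-integrals produce the Cauchy-type kernels under the strict inequalities $\omm<\om_1$ and $\omm<\om_1+\om_2$. The only difference is one of emphasis: the paper carries out explicitly the absolute-convergence bookkeeping you flag as the hard part (the bounds \eq{bound_L1_1} and \eq{getaxi}, the latter proved in Sect.~\ref{ss:proof_lemm_g}), using exactly the ingredients you name.
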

\begin{proof} The first term on the RHS of \eq{tVq0} is $((\cEq\otimes I)f_+)(x_1,x_2)$, and the first term on the RHS of
\eq{eq:tVqmain} is
$q^{-1}((\cEq\otimes I)f_+)(x_1,x_2)$. Consider the second term on the RHS of \eq{tVq0}. We use \eq{tVq03}.
Since \eq{boundfw} holds and $\phi^{++}_q(\eta)=O(|\eta|^{-\nup})$ as $\eta\to\infty$ in the strip $S_{[\mum,\mup]}$,
where $\nup>0$, the integral
\bbe\label{eq:cEppw0}
((\cE^{++}_q\otimes I) w_0)(x_1,x_2)=\frac{1}{2\pi}\int_{\Im\eta=\omm} e^{ix_1\eta}\phi^{++}_q(\eta)\widehat {(w_0)}_1(\eta,x_2) d\eta
\ee
is absolutely convergent. It remains to consider $(\cE^{++}_q w^-(f;q,\cdot, x_2))(x_1)$.
If $\Im\eta=\om_-$,
\beqast
\widehat{w^-}(f;q,\eta,x_2)&=&-\int_{x_2}^{+\infty} dy\, e^{-iy\eta}\frac{1}{2\pi}\int_{\Im\xi_1=\om_1} d\xi_1 \, e^{i\xi_1 y}
\phi^{--}_q(\xi_1)(\widehat{f_+})_1(\xi_1,x_2)\\
&&+\int_{x_2}^{+\infty} dy\,e^{-iy\eta}\frac{1}{(2\pi)^2}\int_{\Im\xi_1=\om} \int_{\Im\xi_2=\om_2}d\xi_1\,d\xi_2\, e^{i(\xi_1+\xi_2) y}
\phi^{--}_q(\xi_1)(\widehat{f_+})(\xi_1,\xi_2).
\eqast
We apply Fubini's theorem to the first integral.   The integral $\int_{x_2}^{+\infty} dy\,e^{i(-\eta+\xi_1)y}=\frac{e^{ix_2(\xi_1-\eta)}}{i(\eta-\xi_1)}$ converges absolutely since $-\omm+\om_1>0$, and the repeated integral converges absolutely 
because $\phi^{--}_q(\xi)$ is uniformly bounded on the line of integration and \eq{boundf1} holds. Similarly, since 
$-\omm+\om_1+\om_2>0$, the integral $\int_{x_2}^{+\infty} dy\,e^{i(-\eta+\xi_1+\xi_2)y}=e^{ix_2(\xi_1+\xi_2-\eta)}/(i(\eta-\xi_1-\xi_2))$ converges absolutely. Since $\phi^{--}_q(\xi)=O(|\xi_1|^{-\num})$ as $\xi_1\to \infty$ along the line of integration, where $\num>0$,  \eq{boundf2} holds, and
 \bbe\label{bound_L1_1}
 \int_\bR \int_\bR d\xi_1\,d\xi_2\, (1+|\xi_1+\xi_2|)^{-1}(1+|\xi_1|)^{-1-\num}(1+|\xi_2|)^{-1}<\infty
 \ee
(see Sect. \ref{ss:proof_lemm_g} for the proof), the Fubini's theorem is applicable to the second integral as well. Thus,
 \bbe\label{eq:wqtVq3}
 \widehat{w^-}(f;q,\eta,x_2)=e^{-i\eta x_2}\widehat{w^-_0}(f;q,\eta,x_2),
 \ee
 where $\widehat{w^-_0}(f;q,\eta,x_2)$ is given by \eq{eq:wqtVq4}, and we obtain the triple integral\footnote{Recall that $\hat w^-_0$ is given by
the double integral \eq{eq:wqtVq4}.}
\bbe\label{eq:cEppwm}
(\cE^{++}_q w^-(\cdot,x_2))(x_1)=\frac{1}{2\pi}\int_{\Im\eta=\omm} e^{i(x_1-x_2)\eta}\phi^{++}_q(\eta)\widehat{w^-_0}(f;q,\eta,x_2)d\eta.
\ee
The integrand admits a bound via $Cg(\eta,\xi_1,\xi_2)$, where
\[
g(\eta,\xi_1,\xi_2)=(1+|\eta|)^{-\nup}(1+|\eta-\xi_1-\xi_2|)^{-1}(1+|\xi_1|)^{-1-\num}(1+|\xi_2|)^{-1}.
\]
Since 
\bbe\label{getaxi}
\int_{\bR^3}g(\eta,\xi_1,\xi_2)d\eta\,d\xi_1\,d\xi_2<\infty
\ee
(see Sect. \ref{ss:proof_lemm_g} for the proof), the triple integral on the the RHS of \eq{eq:cEppwm}  is absolutely convergent.
Substituting  \eq{tVq03}, \eq{eq:cEppw0} and \eq{eq:cEppwm} 
   into \eq{tVq0}, we obtain \eq{eq:tVqmain}.
\end{proof}
\begin{rem}\label{rem:directwm} {\rm In standard situations such as in the two examples that we consider below,
the function $y\mapsto h(y):= (\cE^{--}_q\otimes I)f_+(y,y)-(\cE^{--}_q\otimes I)f_+(y,x_2)$ is
a linear combination of exponential functions  (with the coefficients depending on $x_2$). Then $\widehat{w^-}(q;\eta,x_2)$ can be calculated directly, the double integral on the RHS of \eq{eq:wqtVq4} can be reduced to 1D integrals,
and the condition \eq{boundf2} replaced with the condition on $h$ similar to   \eq{boundfw}.
Analogous  simplifications are possible in more involved cases when $h$ is a piece-wise exponential polynomial in $y$.
}
\end{rem}

  \subsection{Two examples}\label{s:two_examples}
 \subsubsection{Example I. The joint cpdf of $X_T$ and $\barX_T$
  }\label{ss:jointpdf}
  For $a_1\le a_2$, and $x_1\le x_2$, set $f(x_1,x_2)=\bfo_{(-\infty,\min\{a_1,x_2\}]}(x_1)\bfo_{(-\infty,a_2]}(x_2)$ and consider 
\[
  V(f;T,x_1,x_2)=\bQ[x_1+X_T\le a_1, \max\{x_2,x_1+\barX_T\}\le a_2].
  \] 
 If $x_2>a_2$, then $V(f;T,x_1,x_2)=0$. Hence, we assume that
  $x_2\le a_2$.   
    \begin{thm}\label{thm:CPDF}  Let $q>0$, $a_1\le a_2, x_1\le x_2\le a_2$, 
and let $X$ satisfy conditions of Theorem \ref{thm:mainFT}.   Then, for any $\mum<\omm<0<\om_1<\mup$, 
     \beqa\label{tVq3}
&& \tV(f;q,x_1,x_2)\\\nonumber
 &=& \frac{1}{2\pi}\int_{\Im\xi_1=\om_1}\frac{e^{i(x_1-a_1)\xi_1}}{-i\xi_1(q+\psi(\xi_1))}d\xi_1
 \\\nonumber
 &&+\frac{1}{(2\pi)^2 q}\int_{\Im\eta=\omm}d\eta\,e^{i(x_1-a_2)\eta}\phi^{++}_q(\eta)
  \int_{\Im\xi_1=\om_1}d\xi_1\, \frac{e^{i\xi_1(a_2-a_1)}\phi^{--}_q(\xi_1)}{\xi_1(\xi_1-\eta)}.
\eqa 
    \end{thm}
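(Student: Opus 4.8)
The plan is to bypass the messy literal substitution into \eq{eq:tVqmain} and instead first put $q\tV$ into a clean operator form, then evaluate the two operators in the dual space. The first step is a reduction of the payoff. Since $x_1+X_T\le x_1+\barX_T\le\max\{x_2,x_1+\barX_T\}$ for every path, the factor $\bfo_{(-\infty,\min\{a_1,\cdot\}]}$ never has its ``$\min$'' activated by the second argument, so
$f(x_1+X_T,\max\{x_2,x_1+\barX_T\})=\bfo_{\{x_1+X_T\le a_1\}}\bfo_{\{x_2\le a_2\}}\bfo_{\{x_1+\barX_T\le a_2\}}$; using the standing assumption $x_2\le a_2$ this gives $V(f;T,x_1,x_2)=\bQ[x_1+X_T\le a_1,\ x_1+\barX_T\le a_2]$, which is independent of $x_2$. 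Taking the Laplace transform, $q\tV(f;q,x_1,x_2)=\bQ[x_1+X_{T_q}\le a_1,\ x_1+\barX_{T_q}\le a_2]$.

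Next I would produce the operator identity. Applying \lemm{l:deep} (write $X_{T_q}=\barX_{T_q}+(X_{T_q}-\barX_{T_q})$, with $\barX_{T_q}$ independent of $X_{T_q}-\barX_{T_q}\stackrel{d}{=}\uX_{T_q}$), equivalently specializing \eq{eq:tVqmain}, I get with $G=\bfo_{(-\infty,a_1]}$ and barrier $a_2$ the single-barrier formula of \theor{thm:X_barX_exp_2_barr}, namely $q\tV=(\cEq G)(x_1)-(\cEpq\bfo_{[a_2,+\infty)}\cEmq G)(x_1)$. I then insert the atom decompositions $\cEpq=a^+_qI+\cE^{++}_q$ and $\cEmq=a^-_qI+\cE^{--}_q$ of Remark~\ref{rem:apmq} and discard the vanishing pieces: every $a^+_q$ term carries a factor $\bfo_{[a_2,+\infty)}(x_1)=0$ (as $x_1\le x_2\le a_2$), while the cross term $a^-_q(\cE^{++}_q\bfo_{[a_2,+\infty)}G)(x_1)$ vanishes because $\bfo_{[a_2,+\infty)}\bfo_{(-\infty,a_1]}=0$ a.e. (since $a_1\le a_2$ and $\cE^{++}_q$ carries no atom). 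This leaves the clean expression $q\tV=(\cEq G)(x_1)-(\cE^{++}_q\bfo_{[a_2,+\infty)}\cE^{--}_q G)(x_1)$.

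The third step is the Fourier/pdo evaluation. For $\Im\xi_1>0$ one has $\hat G(\xi_1)=e^{-ia_1\xi_1}/(-i\xi_1)$, and since $\cEq,\cE^{\pm\pm}_q$ act in the dual space as multiplication by $q/(q+\psi)$ and $\phi^{\pm\pm}_q$, the first operator yields $q^{-1}(\cEq G)(x_1)=\frac{1}{2\pi}\int_{\Im\xi_1=\om_1}\frac{e^{i(x_1-a_1)\xi_1}}{-i\xi_1(q+\psi(\xi_1))}\,d\xi_1$, the first term of \eq{tVq3}. For the second operator I would write $(\cE^{--}_q G)(y)$ as a contour integral over $\Im\xi_1=\om_1$, multiply by $\bfo_{[a_2,+\infty)}(y)$, compute the partial transform $\int_{a_2}^\infty e^{-iy\eta}(\cdot)\,dy$ by interchanging the order of integration (the inner integral $\int_{a_2}^\infty e^{iy(\xi_1-\eta)}\,dy=e^{ia_2(\xi_1-\eta)}/(-i(\xi_1-\eta))$ converges because $\om_1-\omm>0$), and finally apply $\cE^{++}_q$ along $\Im\eta=\omm$; collecting the phases and $\hat G$ reproduces the double integral of \eq{tVq3}.

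All contours are admissible for $\mum<\omm<0<\om_1<\mup$: $\om_1>0$ makes $\hat G$ and $\cE^{--}_q G$ well defined, $\om_1<\mup$ keeps $\phi^{--}_q$ and $(q+\psi)^{-1}$ analytic and bounded (\lemm{phipmq_anal_cont}), $\omm<0$ makes the partial transform of the bounded, right-supported function $\bfo_{[a_2,+\infty)}\cE^{--}_q G$ converge, and $\omm>\mum$ keeps $\phi^{++}_q$ in its domain. I expect the main obstacle to be the bookkeeping of the second step: one must check that the general formula genuinely collapses, i.e.\ that the atom terms and the apparent $\min\{a_1,x_2\}$-dependence of a direct specialization of \eq{eq:tVqmain} really cancel so that only the clean constant $a_1$ and a single $\phi^{++}_q\phi^{--}_q$ integral survive. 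The only remaining technical point is the absolute convergence of the resulting repeated integral (hence the validity of Fubini), which follows from the decay bounds $|\phi^{\pm\pm}_q(\xi)|\le C(1+|\xi|)^{-\nu_\pm}$ together with the gap $\om_1-\omm>0$; this is precisely where the hypothesis $\nu_\pm>0$ enters.
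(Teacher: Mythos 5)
Your proposal is correct and lands on exactly the same dual-space computations as the paper, but it is packaged through a different (and cleaner) wrapper. The paper's proof literally specializes Theorem \ref{thm:mainFT}: it checks that $w_0\equiv 0$ (which kills the $a^-_q$-term of \eq{eq:tVqmain}; note $w_0(y,x_2)=-\bfo_{[x_2,+\infty)}(y)\bfo_{(-\infty,a_1]}(y)\bfo_{(a_2,+\infty)}(y)$, i.e.\ precisely your support-disjointness observation $\bfo_{(a_2,+\infty)}\bfo_{(-\infty,a_1]}=0$), computes $\widehat{(f_+)}_1(\xi_1,x_2)=\bfo_{(-\infty,a_2]}(x_2)e^{-ia_1\xi_1}/(-i\xi_1)$, and then, following the shortcut of Remark \ref{rem:directwm}, evaluates $w^-$ and $\widehat{w^-}$ directly (formula \eq{hwm}) --- which is word-for-word your partial transform $\int_{a_2}^{+\infty}e^{iy(\xi_1-\eta)}dy=e^{ia_2(\xi_1-\eta)}/(i(\eta-\xi_1))$, with the same Fubini justification from the gap $\om_1-\omm>0$ and the decay bounds \eq{WHFdecayP}--\eq{WHFdecayM}. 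What you do differently is to first reduce the joint cpdf probabilistically to an up-and-out digital (making the $x_2$-independence explicit) and then invoke the barrier identity of Theorem \ref{thm:X_barX_exp_2_barr} plus the atom decomposition of Remark \ref{rem:apmq}, rather than substituting into the general triple-integral theorem; this buys structural transparency (it makes Remarks \ref{rem:cpdf_simpl} and \ref{rem:no-touch} immediate and explains why only the single $\phi^{++}_q\phi^{--}_q$ integral survives), at the cost of re-deriving for this special case what Theorem \ref{thm:mainFT} provides in general. One pinprick to fix: your claim ``$\bfo_{[a_2,+\infty)}(x_1)=0$ as $x_1\le x_2\le a_2$'' fails at the admissible boundary point $x_1=x_2=a_2$, and Theorem \ref{thm:X_barX_exp_2_barr} is stated only for $x<h$; however, if you derive the operator identity directly from Theorem \ref{thm:X_barX_exp}, the factor $\bfo_{(-\infty,a_2]}(y)-1$ in \eq{eq:wqVtq} produces the \emph{open} indicator $\bfo_{(a_2,+\infty)}$, so the $a^+_q$-terms vanish for all $x_1\le a_2$ (including $x_1=a_2$ in the finite-variation case $a^+_q>0$) and your argument goes through on the full closed range; the remainder of your proof --- the vanishing of the cross term because $\cE^{++}_q$ has no atom, and the absolute convergence of the repeated integral from $\nu_\pm>0$ --- is sound and matches the paper's bookkeeping.
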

    
  \begin{proof} We have $f_+(x_1,x_2)=\bfo_{(-\infty,a_1]}(x_1)\bfo_{(-\infty,a_2]}(x_2), $ therefore, for $x_2\le a_2$, 
    \beqast
    w_0(y,x_2)&=&\bfo_{[x_2,+\infty)}(y)\bfo_{(-\infty,a_1]}(y)(\bfo_{(-\infty,a_2]}(y)-\bfo_{(-\infty,a_2]}(x_2))\\
    &=&-\bfo_{[x_2,+\infty)}(y)\bfo_{(-\infty,a_1]}(y)\bfo_{(a_2,+\infty)}(y)=0,
    \eqast
    hence, the second term on the RHS of \eq{eq:tVqmain} is 0. Next, 
    \[
    \widehat{(f_+)}_1(\xi_1,x_2)=\bfo_{(-\infty,a_2]}(x_2)\int_{-\infty}^{a_1}e^{-ix_1\xi_1}d\xi_1=\bfo_{(-\infty,a_2]}(x_2)\frac{e^{-ia_1\xi_1}}{-i\xi_1}d\xi_1
    \]
    is well-defined in the upper half-plane, and satisfies the bound \eq{boundf1} in any strip $S_{[\muppr, \mup]}$, where $\muppr\in (0,\mup)$.   Hence, the first term on the RHS of   \eq{eq:tVqmain} becomes the first term on the RHS of \eq{tVq3}.
      It remains to evaluate the double integral on the RHS of \eq{eq:tVqmain}. As mentioned in Remark \ref{rem:directwm}, in the present case, it is simpler to directly evaluate $w^-$ and then $\widehat{w^-}$: for any $x_2\le a_2$, $\om_1\in (0,\mup)$ and any $\eta\in \{\Im\eta\in (\mum,\om_1)\}$,
    \beqa\nonumber
    w^-(q,y,x_2)&=&\bfo_{(x_2,+\infty)}(y)(\cE^{--}_q\bfo_{(-\infty,a_1]})(y)(\bfo_{(-\infty,a_2]}(y)-1)
    \\\nonumber&=&
   -\bfo_{[a_2,+\infty)}(y)(\cE^{--}_q\bfo_{(-\infty,a_1]})(y)\\\nonumber
      &=&-\bfo_{(a_2,+\infty)}(y)\frac{1}{2\pi}\int_{\Im \xi_1=\om_1}d\xi_1\, e^{i(y-a_1)\xi_1}\frac{\phi^{--}(\xi_1)}{-i\xi_1},
    \eqa
    \beqa\label{hwm}
    \widehat{w^-}(q,\eta,x_2)&=&-\int_{a_2}^{+\infty}e^{-iy\eta}\frac{1}{2\pi}\int_{\Im \xi_1=\om_1}d\xi_1\, e^{i(y-a_1)\xi_1}\frac{\phi^{--}(\xi_1)}{-i\xi_1}\\\nonumber
    &=& -\frac{e^{-ia_2\eta}}{2\pi}\int_{\Im \xi_1=\om_1}d\xi_1\, e^{i(a_2-a_1)\xi_1}\frac{\phi^{--}(\xi_1)}{i(\eta-\xi_1)(-i\xi_1)}.
    \eqa
    It is easy to see that both integrals are absolutely convergent.
    Substituting \eq{hwm} into the double integral on the RHS of \eq{eq:tVqmain}, we obtain \eq{tVq3}.
    \end{proof}
    \begin{rem}{\rm If $x_1>a_1$, then it advantageous to move the line of integration in the first integral on the RHS of
    \eq{tVq3} down, and, on crossing the simple pole, apply the residue theorem. In the result, the first term
    on the RHS  turns into
    \[ 
    \frac{1}{q}+\frac{1}{2\pi}\int_{\Im\eta=\omm}\frac{e^{i(x_1-a_1)\eta}}{-i\eta(q+\psi(\eta))}d\eta.
    \]
        }
   \end{rem}
      \begin{rem}\label{rem:cpdf_simpl}{\rm
    The first step of the proof of Theorem \ref{thm:CPDF} implies that we can replace $\phi^{--}_q$ in the double integral on the RHS
    of \eq{tVq3} with $\phimq$. From the computational point of view, if we make the conformal change of variables, this change does not lead to a significant increase in sizes of arrays necessary
    for accurate calculations, especially if $a_2-a_1>0$. The advantage is that it becomes unnecessary to evaluate $a^-_q$.
    Recall that the same $a^-_q$ appears for all $\xi_1$ in the formula $\phi^{--}_q(\xi_1)=\phimq(\xi_1)-a^-_q$,
    hence, it  is necessary to evaluate $a^-_q$ with a higher precision that $\phimq(\xi_1)$. At the same time, the integrand in the formula for
    $a^-_q$ decays slower at infinity than the integrand in the formula for $\phimq(\xi_1)$, hence, a significantly longer grid is needed to evaluate
    $\phimq(\xi_1)$ sufficiently accurately.
    }
    \end{rem}
     \begin{rem}\label{rem:no-touch}{\rm 
     Denote by $I_2(q;x_1,x_2)$ the double integral on the RHS of \eq{tVq3} multiplied by $q$.
 It follows from \eq{tVq02} that we can replace $\phi^{++}_q$ in the double integral 
     with $\phipq$. If $a_1<a_2$ and the conformal deformations are used, then this replacement causes no serious computational problems. If $a_1=a_2$, then the replacement leads to  errors typical for the Fourier inversion at points
    of discontinuity. However, in this case, 
     the RHS of \eq{tVq3} can be simplified as follows. We replace $\phi^{\pm,\pm}_q$ with
    $\phi^\pm_q$, which is admissible, then push the line of integration in the inner integral down, cross two simple poles at
    $\xi_1=0$ and $\xi_1=\eta$, and apply the residue theorem.  
    The double integral becomes the following 1D integral:
    \[
    I_2(q;x_1,x_2)=\frac{1}{2\pi}\int_{\Im\eta=\omm}d\eta\, e^{i(x_1-a_2)\eta}\frac{\phipq(\eta)(1-\phimq(\eta))}{-i\eta}.
    \]
    We push the line of integration to $\{\Im\eta=\om_1\}$ and use  the equality 
     $\phipq(\eta)\phimq(\eta)=q/(q+\psi(\eta))$ to obtain the
    formula for the perpetual no-touch option: 
    \beqa\label{no-touch}
   q \tV(f,q;x_1,x_2)&=&    \frac{1}{2\pi}\int_{\Im\xi_1=\om_1}d\xi_1\, \frac{e^{i(x_1-a_2)\xi_1}\phipq(\xi_1)}{-i\xi_1},
    \ x_1\le x_2\le a_2.
    \eqa 
    Of course, \eq{no-touch} can be obtained using the main theorem directly.
        }
    \end{rem}

\begin{rem}\label{rem:two_form_cpdf}{\rm 
   One  can push the line of integration in the outer integral in the double integral on the RHS of \eq{tVq3} up and
  obtain 
  \beqast\label{I2Hilb}
  I_2(q;x_1,x_2)&=&\frac{1}{4\pi}\int_{\Im\xi_1=\om_1}d\xi_1\, e^{i(x_1-a_1)\xi_1}\frac{\phi^{++}_q(\xi_1)\phi^{--}_q(\xi_1)}{-i\xi_1}\\\nonumber
  &&+\frac{1}{(2\pi)^2}\mathrm{v.p.}\int_{\Im\eta=\om_1}d\eta\,e^{i(x_1-a_2)\eta}\phi^{++}_q(\eta)
  \int_{\Im\xi_1=\om_1}d\xi_1\, \frac{e^{i\xi_1(a_2-a_1)}\phi^{--}_q(\xi_1)}{\xi_1(\xi_1-\eta)},
\eqast
where $\mathrm{v.p.}$ denotes the Cauchy principal value. After that, one  can apply the fast Hilbert transform. The integrand decaying very slowly at infinity, accurate calculations are possible only if very long grids are used, hence, the CPU cost is very large even for a moderate error tolerance.    }\end{rem}

  \subsubsection{Example II. Option to exchange the supremum for a power of the underlying} Let $\be>1$. Consider the option to exchange the  supremum $\bar S_T=e^{\bar X_T}$ for the power $S_T^\be=e^{\be X_T}$. The payoff function  $f(x_1,x_2)=(e^{\be x_1}-e^{x_2})_+\bfo_{(-\infty,x_2]}(x_1)$ satisfies \eq{simple_bound_f_X_barX1pr}-\eq{simple_bound_f_X_barX2pr}
 with arbitrary $\muppr>0$, $\mumpr<-\be$. 
 In Sect. \ref{ss:proof_Prop_prop:exch_beta}, we prove

\begin{prop}\label{prop:exch_beta}
 Let $\be>1$ and let 
 conditions of Theorem \ref{thm:mainFT} hold  with $\mum<-\be, \mup> 0$.
 Then, for  $x_1\le x_2$, and any $0<\om_1<\mup$, $\mum<\omm<-\be$,
 \bbe\label{qtVbe2}
 \tV(f;q,x_1,x_2)=I_1(q,x_1,x_2)+q^{-1}\sum_{j=2,3}I_j(q,x_1,x_2),
 \ee
 where
 $I_j(q,x_1,x_2)$, $j=1,2,3,$ are given by 
  \bbe\label{eq:I2}
 I_1(q,x_1,x_2)= \frac{1}{2\pi }\int_{\Im\xi_1=\om_1}d\xi_1\,\frac{e^{i(x_1-x_2)\xi_1}}{q+\psi(\xi_1)}
 \left(\frac{e^{x_2\be}}{\be-i\xi_1}
  +\be \frac{e^{x_2(1+i\xi_1(1-1/\be))}}{(\be-i\xi_1)(-i\xi_1)}-\frac{e^{x_2}}{-i\xi_1}\right),
  \ee
  \bbe\label{eq:I3}
I_2(q,x_1,x_2)=a^-_q\frac{e^{x_2}}{2\pi}\int_{\Im\eta=\omm}d\eta\, e^{i(x_1-x_2)\eta}\frac{\phi^{++}_q(\eta)}{i\eta(1-i\eta)},
\ee 
\beqa\label{eq:I4}
&&
I_3(q,x_1,x_2)\\\nonumber
&=&\frac{1}{(2\pi)^2}\int_{\Im\eta=\omm} d\eta\,e^{i(x_1-x_2)\eta}\phi^{++}_q(\eta)
\int_{\Im\xi_1=\om_1} d\xi_1\, e^{-ix_2\xi_1}\frac{\phi^{--}_q(\xi_1)}{i(\eta-\xi_1)}\\\nonumber
&&\cdot\left[\frac{e^{\be x_2}}{i\eta-\be}
+\frac{\be e^{(1+i\xi_1(1-1/\be)x_2}(1-i\xi_1/\be)}{(\be-i\xi_1)(-i\xi_1)(i\eta-1-i\xi_1(1-1/\be))}
-\frac{e^{x_2}(1-i\xi_1)}{(-i\xi_1)(i\eta-1)}\right].
\eqa
 
\end{prop}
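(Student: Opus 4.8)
The plan is to specialize Theorem \ref{thm:mainFT} to the payoff $f(x_1,x_2)=(e^{\be x_1}-e^{x_2})_+\bfo_{(-\infty,x_2]}(x_1)$ and to evaluate the three integrals on the RHS of \eqref{eq:tVqmain} one at a time, matching them with $I_1$, $q^{-1}I_2$ and $q^{-1}I_3$. Under the stated hypotheses the bounds \eqref{simple_bound_f_X_barX1pr}--\eqref{simple_bound_f_X_barX2pr} hold with any $\muppr>0$, $\mumpr<-\be$ (as observed just before the statement), and \eqref{boundf1}--\eqref{boundf2} will follow from the explicit transforms computed below, so Theorem \ref{thm:mainFT} applies. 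Since $\mum<-\be$ and $\mup>0$, the admissible set \eqref{ompmom1om2} of contours contains the values $\om_1\in(0,\mup)$, $\omm\in(\mum,-\be)$ quoted in the proposition, and it remains only to compute the three integrals for this particular $f$.

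The key object is the partial Fourier transform in the spot variable, $(\widehat{f_+})_1(\xi_1,x_2)$. For fixed $x_2\ge 0$ the section $x_1\mapsto f_+(x_1,x_2)$ is supported on $(x_2/\be,x_2]$—the set where simultaneously $x_1\le x_2$ and $e^{\be x_1}>e^{x_2}$—and equals $e^{\be x_1}-e^{x_2}$ there. Evaluating $\int_{x_2/\be}^{x_2}e^{-ix_1\xi_1}(e^{\be x_1}-e^{x_2})\,dx_1$ by elementary antiderivatives produces boundary contributions at the two endpoints $x_2$ and $x_2/\be$, the latter being responsible for the mixed exponential $e^{x_2(1+i\xi_1(1-1/\be))}$. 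Inserting $(\widehat{f_+})_1(\xi_1,x_2)$ into the first term of \eqref{eq:tVqmain} and absorbing the common factor $e^{-ix_2\xi_1}$ into the phase $e^{i(x_1-x_2)\xi_1}$ reproduces the parenthesis of \eqref{eq:I2} and hence $I_1$.

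For the remaining two terms I would follow Remark \ref{rem:directwm} and compute $w_0$ and $w^-$ directly rather than through the double integral \eqref{eq:wqtVq4}. Since $f_+(y,x_2)=0$ for $y>x_2$, in $w_0(y,x_2)=\bfo_{[x_2,+\infty)}(y)(f_+(y,y)-f_+(y,x_2))$ only the diagonal value $f_+(y,y)$ survives, so $\widehat{(w_0)}_1(\eta,x_2)$ is an elementary $\int_{x_2}^{\infty}$ integral; substituting it into the second term of \eqref{eq:tVqmain} and again pulling $e^{-ix_2\eta}$ into $e^{i(x_1-x_2)\eta}$ yields $I_2$. For $I_3$ I would write $w^-(f;q,y,x_2)=\bfo_{[x_2,+\infty)}(y)h(y)$ with $h(y)=(\cE^{--}_q\otimes I)f_+(y,y)-(\cE^{--}_q\otimes I)f_+(y,x_2)$, and evaluate the two pieces using $\cE^{--}_qe^{ix\xi_1}=\phi^{--}_q(\xi_1)e^{ix\xi_1}$ together with $(\widehat{f_+})_1(\xi_1,\cdot)$ already in hand; then $h(y)$ becomes a $\xi_1$-integral of a linear combination of exponentials in $y$. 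Transforming in $y$ over $[x_2,\infty)$ introduces precisely the three $\eta$-denominators $(i\eta-\be)$, $(i\eta-1-i\xi_1(1-1/\be))$ and $(i\eta-1)$ appearing in \eqref{eq:I4}, and feeding $\widehat{w^-_0}=e^{i\eta x_2}\widehat{w^-}$ into the third term of \eqref{eq:tVqmain} gives $I_3$. Absolute convergence of every integral is inherited from Theorem \ref{thm:mainFT} and the decay of $\phi^{\pm\pm}_q$ recorded in Remark \ref{rem:apmq}.

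The delicate step is the diagonal evaluation $(\cE^{--}_q\otimes I)f_+(y,y)$: the second argument $y$ enters inside the section transform $(\widehat{f_+})_1(\xi_1,y)$—through both the truncated support $(y/\be,y]$ and the subtracted constant $e^{y}$—so one cannot simply set $x_2=y$ in the already-simplified off-diagonal expression, and the substitution must be made before the $\xi_1$-integration. Keeping the contributions of the two endpoints $y/\be$ and $y$ separate, and checking that after the $y$-integration and the shift $\widehat{w^-_0}=e^{i\eta x_2}\widehat{w^-}$ the exponents collapse exactly to the three terms in the bracket of \eqref{eq:I4}, is the main bookkeeping obstacle; once the sectional transforms are correct, the assembly into \eqref{qtVbe2} is mechanical.
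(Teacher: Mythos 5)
Your overall route is the paper's route: specialize Theorem \ref{thm:mainFT} to this payoff, compute the sectional transform $(\widehat{f_+})_1(\xi_1,x_2)=\int_{x_2/\be}^{x_2}e^{-ix_1\xi_1}(e^{\be x_1}-e^{x_2})\,dx_1$ to get $I_1$, and evaluate $w_0$ and $w^-$ directly as in Remark \ref{rem:directwm}, with the substitution $x_2\to y$ in $(\widehat{f_+})_1(\xi_1,\cdot)$ performed \emph{before} the $\xi_1$-integration for the diagonal term $((\cE^{--}_q\otimes I)f_+)(y,y)$ --- the subtlety you flag is exactly the one the paper's computation of $\widehat{w^-}$ turns on, and your $I_1$ and $I_3$ steps match the paper's. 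The genuine gap is in the $I_2$ step. On your own premise that $f_+$ is the extension by zero, so that $f_+(y,x_2)=0$ for $y>x_2$, the ``diagonal only'' atom term is $w_0(y,x_2)=\bfo_{[x_2,+\infty)}(y)(e^{\be y}-e^{y})$, whose transform on $\{\Im\eta=\omm\}$, $\omm<-\be$, is
\[
\widehat{(w_0)}_1(\eta,x_2)=\frac{e^{(1-i\eta)x_2}}{1-i\eta}-\frac{e^{(\be-i\eta)x_2}}{\be-i\eta},
\]
so the second term of \eq{eq:tVqmain} acquires an extra $e^{\be x_2}/(\be-i\eta)$ contribution and does \emph{not} reduce to the single-pole structure $e^{x_2}/(i\eta(1-i\eta))$ of \eq{eq:I3}. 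The form \eq{eq:I3} requires the $e^{\be y}$ growth to cancel in the difference: the paper computes
\[
w_0(y,x_2)=\bfo_{[x_2,+\infty)}(y)\bigl((e^{\be y}-e^{y})-(e^{\be y}-e^{x_2})\bigr)=\bfo_{[x_2,+\infty)}(y)(e^{x_2}-e^{y}),
\]
that is, it takes $f_+(y,x_2)=(e^{\be y}-e^{x_2})_+$ for $y\ge x_2$ --- extension by the payoff formula, not by zero. This is the same convention as in Example I, where the paper sets $f_+(x_1,x_2)=\bfo_{(-\infty,a_1]}(x_1)\bfo_{(-\infty,a_2]}(x_2)$, which likewise does not vanish for $x_1>x_2$.

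So your assertion ``only the diagonal value survives \ldots\ yields $I_2$'' fails as written: carried out consistently with the zero extension, your argument proves a valid identity, but with \eq{eq:I3} replaced by the two-term expression above; the two versions of the atom term differ by $a^-_q\,\cE^{++}_q\bigl[\bfo_{[x_2,+\infty)}(\cdot)\bigl(e^{\be\,\cdot}-e^{x_2}\bigr)\bigr](x_1)$, which is not zero whenever $a^-_q>0$ (by Lemma \ref{lem:atoms}, the case of finite variation with $\mu>0$; in all other cases $a^-_q=0$ and the discrepancy is invisible, e.g.\ under the standing assumption of Section \ref{s:numer}). To obtain the Proposition exactly as stated you must adopt the paper's extension convention in the $w_0$ computation --- which contradicts your premise $f_+(y,x_2)=0$ for $y>x_2$ --- and you should then note that the remaining terms ($I_1$ via the transform supported on $(x_2/\be,x_2]$, and $\widehat{w^-}$ via the bracket $(\widehat{f_+})_1(\xi_1,y)-(\widehat{f_+})_1(\xi_1,x_2)$) are computed with the truncated sections, so the choice of extension in the atom term has to be justified rather than asserted. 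The gap is confined to this $w_0/I_2$ bookkeeping; the rest of the proposal agrees with the paper's proof.
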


     \section{Numerical evaluation of $V(f;T;x_1,x_2)$ in Example I}\label{s:numer}

     \subsection{Standing assumption}\label{ss:numer_scheme: simple situation}
     In this section, we assume that $X$ is a SINH-regular process of order $\nu\in [0+,2]\setminus\{1+\}$ and type
     $([\mum,\mup], \cC_{\gam,\gap},\cC_{\gampr,\gappr})$, where $\mum<0<\mup$ and $\gampr<0<\gappr$. Furthermore, we assume that either
     $\nu\ge 1$ or $\nu<1$ and the ``drift" $\mu$ in \eq{eq:reprpsi} is 0. Then 
     \bbe\label{cone_nupr}
     \Re \psi(\xi)\ge c_{\psi;\infty} |\xi|^{\nu}-C_\psi,\quad \forall\, \xi\in i(\mum,\mup)+(\cC_{\gampr,\gappr}\cup\{0\}),
     \ee
     where $C_\psi, c_{\psi;\infty}>0$ are independent of $\xi$.
     \begin{lem}\label{lem:cones_Brom} Let the characteristic exponent $\psi$ of a SINH-regular process satisfy \eq{cone_nupr}.
     
     Then there exist $\om_\ell\in (0,\pi/2)$ and $c, \sg>0$ such that for all 
     $q\in \sg+\cC_{\pi/2+\om_\ell}$ and  $\xi\in i(\mum,\mup)+(\cC_{\gampr,\gappr}\cup\{0\})$,
     \bbe\label{bound:two cones}
     |q+\psi(\xi)|\ge c(|q|+|\xi|^\nu).
     \ee
     \end{lem}
     \begin{proof} Since $\psi(\xi)$ admits an upper bound via $C(1+|\xi|^\nu)$, condition \eq{cone_nupr} implies that there
     exist $C_1>0$ and $\ga\in (0,\pi/2)$ such that
     $
     \psi(\xi)+C_1\in \cC_\ga$ for all $\xi\in i(\mum,\mup)+(\cC_{\gampr,\gappr}\cup\{0\})$. Hence, for any $\om_\ell\in (0, \pi/2-\ga)$,
     there exist $\sg,c>0$ such that \eq{bound:two cones} holds.
     
     \end{proof}  
    The bound \eq{bound:two cones}
 allows us to use
     one sinh-deformed contour in the lower half-plane and  one in the upper half-plane for all purposes: the calculation 
     of the Wiener-Hopf factors and evaluation of the integrals on the RHS of \eq{tVq3}. If either $\mum=0$ or $\mup=0$, then both contours
     must cross $i\bR$ in the same half-plane but the types of contours (two non-intersecting contours, one with the wings deformed upwards, the other one with the wings deformed downwards) remain the same as in the case $\mum<0<\mup$.
     
     If \eq{cone_nupr} fails, for instance, if $\nu<1$ and $\mu\neq 0$, then the contour of integration in the formulas for the Wiener-Hopf factors can be deformed only upwards (if $\mu>0$) or downwards (if $\mu<0$). A similar complication
     arises if $\nu=1+$. For instance, for KoBoL of order 1+ in the asymmetric case $c_+\neq c_-$, the type of
     admissible deformations depends on the sign of $c_1-c_2$. Hence, we need to use an additional contour to evaluate the Wiener-Hopf factors.
     Even more importantly,
     the conformal deformations can be used only if the Gaver-Stephest method or GWR algorithm are used or the line of integration in the Bromwich integral is not deformed; conformal deformations  of the
     contours of integration in the formula for $\tV(f;q,x_1,x_2)$ and the Bromwich integral are impossible if we want to preserve the analyticity of the double and triple integrands.  To see this,
     it suffices to consider  the degenerate case $\psi(\xi)=-i\mu\xi$: the conditions $q-i\mu \xi_1\not\in (0,\infty]$,  $q-i\mu\eta\not\in (0,\infty]$ are impossible to satisfy if $\Re q\to -\infty$, and the $\xi_1$- and $\eta$- contours are deformed upward and downward.
 Hence, we can either use the Gaver-Wynn Rho algorithm (see Sect. \ref{GavWynn}) or acceleration schemes of the Euler type,
     e.g., the summation by parts formula (see Sect. \ref{ss:sum-by-parts}).   See Sect. \ref{s:SL_vinvar} for details.
   Finally, if either $\gampr=0$ or $\gappr=0$ (but not both), then additional complications arise, and some of deformations have to be of a less
   efficient sub-polynomial type. See \cite{ConfAccelerationStable} for examples in the context of calculation of stable probability distributions. 
         
  \subsection{Sinh-acceleration}\label{ss:sinh_1D} 
 Consider the first term on the RHS of  \eq{tVq3}, denote it $I_1(q;x_1-a_1)$. As $\xi\to\infty$ along the line of integration,
the integrand decays not faster than $|\xi|^{-3}$. 
The error of the truncation $\sum_{|j|\le N}$ of the infinite sum  $\sum_{j\in \bZ}$  in the infinite trapezoid rule is approximately equal to the error of the truncation 
$\int_{-\La}^\La$, $\La=N\ze$, of the integral $\int_{-\infty}^{+\infty}$, hence, for a small error tolerance $\eps>0$, $\La$ must be of the order of $\eps^{-1/2}$, and the complexity of
the numerical scheme of the evaluation of the integral is  of the order of $\eps^{-1/2}\ln(1/\eps)$. If $x_1-a_1$ is not small
in  absolute value, acceleration schemes of the Euler type can be employed to decrease the number of terms of the simplified trapezoid rule. If $x_1-a_1$ is zero or very close to 0,  Euler acceleration schemes are either non-applicable or rather inefficient.

Let $X$ be SINH-regular.  Assuming that in Definition \ref{def:SINH_reg_proc_1D0}, 
  $\ga_\pm$ are not extremely small in absolute value,  the sinh-acceleration \eq{eq:sinh}
is the most efficient change of variables. Note that 
in \eq{eq:sinh}, 
$\om_1\in \bR$ is, generally, different from $\om_1$ in the formulas of the preceding sections, $\om\in (-\pi/2,\pi/2)$ and $b>0$. The parameters $\om_1,b,\om$ are chosen so that the contour 
$\cL_{\om_1,b,\om}:=\chi_{\om_1,b,\om}(\bR)\subset i(0,\mup)+(\cC_{\gam,\gap}\cup\{0\})$. The parameter $\om$ is chosen so that the oscillating factor becomes a fast decaying one. Under the integral sign of the integral $I_1(q;x_1-a_1)$, the oscillating factor is $e^{i(x_1-a_1)\xi_1}$.
Hence, if $x_1<a_1$, we must choose $\om\in (\gampr, 0)$ (an approximately optimal choice is $\om=\gampr/2$), if $x_1-a_1>0$, we must choose
$\om\in (0,\gappr)$ (an approximately optimal choice is $\om=\gappr/2$), and if $x_1=a_1$, any $\om\in (\gampr,\gappr)$ is admissible 
(an approximately optimal choice is $\om=(\gampr+\gappr)/2$). If $x_1-a_1<0$, it is advantageous
 to push the line of integration in the 1D integral to the lower half-plane, and, on crossing the simple pole at 0,  apply the residue theorem.
 
To evaluate the repeated integral on the RHS of \eq{tVq3}, we deform both lines of integration.
Since $a_2-a_1>0$, it is advantageous to deform the wings of the contour of integration w.r.t. $\xi_1$ up; denote this contour
$\cL^+:=\cL_{\om^+_1,b^+,\om^+}$. Since 
$x_1-a_2\le 0$, it is advantageous to deform the wings of the contour of integration w.r.t. $\eta$ down, denote this contour
$\cL^-:=\cL_{\om^-_1,b^-,\om^-}$. Hence, we choose $\om^+=\gappr/2$, $\om^-=\gampr/2$;
the remaining parameters are chosen so that $\mup>\om^+_1+b^+\sin\om^+>0>\om^-_1+b^-\sin\om^->\mum$. 
See Fig. \ref{fig:TwoGraphsandStrip}.
The result is
 \beqa\label{tVq3sinh}
 \tV(f;q,x_1,x_2)&=&\frac{1}{2\pi}\int_{\cL_{\om_1,b,\om}}\frac{qe^{i(x_1-a_1)\xi_1}}{(q+\psi(\xi_1))(-i\xi_1)}d\xi_1\\\nonumber
&+& \frac{1}{(2\pi)^2 q}\int_{\cL^-}  d\eta\, e^{i(x_1-a_2)\eta}\phipq(\eta)
  \int_{\cL^+}d\xi_1
\, \frac{e^{i\xi_1(a_2-a_1)}\phimq(\xi_1)}{\xi_1(\xi_1-\eta)}.
 \eqa
 We make an appropriate  sinh-change of variables in each integral, and apply the simplified trapezoid rule w.r.t. each new variable.
   
 \subsection{Calculations using the sinh-acceleration in the Bromwich integral}\label{ss:SINH-Bromwich}
 Define
 \bbe\label{eq:sinhLapl}
\chi_{L; \sg_\ell,b_\ell,\om_\ell}(y)=\sg_\ell +i b_\ell\sinh(i\om_\ell+y),
\ee
where $\om_\ell\in (0,\pi/2), b_\ell>0, \sg_\ell-b_\ell\sin\om_\ell>0$, and deform 
 the line of integration in the Bromwich integral to
$\cL^{(L)}=\chi_{L; \sg_\ell,b_\ell,\om_\ell}(R)$. For $q\in \cL^{(L)}$, we can calculate
$\tV(f;q,x_1,x_2)$ using the same algorithm as in the case $q>0$,   
 if there exist $R,q_0,\ga>0$ and $\ga_{--}<0<\ga_{++}$ such that  $q+\psi(\eta)\neq 0$ for all $q\in \cL^{(L)}$ and
  $\eta\in \cC_{\ga_{--},\ga_{++}}, |\eta|\ge R$. In order to avoid the complications of the evaluation of the logarithm on the Riemann surface, it is advisable to ensure  that $1+\psi(\eta)/q\not\in (-\infty,0]$ for pairs $(q,\eta)$ used in the numerical procedure.  See Fig. \ref{Qphipm} for an illustration.
  These conditions can be satisfied if \eq{cone_nupr} holds.
    
  The sequence of deformations is as follows. 
First, for $q$ on the line of integration $\{\Im q=\sg\}$ in the Bromwich integral, we deform the contours of deformation w.r.t. $\eta$ and $\xi_1$ (and contours in the formulas for the Wiener-Hopf factors). 
Then we deform the line of integration w.r.t. $q$ into the contour $\cL^{(L)}$. We choose $\om_\ell$ and $\om^\pm$  sufficiently small in absolute value so that, in the process of deformation,
for all $1+\psi(\xi)/q\neq 0$ and $q+\psi(\eta)/q\neq 0$  for all dual variables $q,\eta,\xi_1$ that appear in the formulas for $\tV(f;q;x_1,x_2)$ and formulas for
the Wiener-Hopf factors.  To make an appropriate choice, the bound \eq{cone_nupr} must be taken into account.
See \cite{paraLaplace} for details. In \cite{paraLaplace}, fractional-parabolic deformations and changes of variables were used.
The modification to the sinh-acceleration is straightforward. 

\subsection{The main blocks of the algorithm}\label{ss:main_blocks} For the sake of brevity, we omit the block for
the evaluation of the 1D integral on the RHS of \eq{tVq3}; this block is  the same  as
in the European option pricing procedure (see \cite{SINHregular}); the type of deformation depends on the sign of $x_1-a_1$. For the 2D integral,
the scheme is independent of $x_1-a_1$. 
We formulate the algorithm assuming that the sinh-acceleration is applied to the Bromwich integral; if the Gaver-Wynn Rho algorithm is used, the modifications of the first step and last step are  described in Sect. \ref{GavWynn}.
We calculate $F(T,a_1,a_2)=V(T,a_1,a_2; 0, 0)$ (that is,  $x_1=x_2=0$).
\begin{enumerate}[Step I.]
\item 
Choose the sinh-deformation in the Bromwich integral and grid for the simplified trapezoid rule: $\vec{y}=\ze_\ell*(0:1:N_\ell)$,
$\vec{q}=\sg_\ell+i*b_\ell*\sinh(i*\om_\ell+\vec{y})$.
Calculate  the derivative $\vec{der_\ell}=i*b_\ell*\cosh(i*\om_\ell+\vec{y})$.
\item
Choose  the sinh-deformations and grids for the simplified trapezoid rule on $\cL^\pm$: $\vec{y^\pm}=\ze^\pm*(-N^\pm:1:N^\pm)$,
$\vec{\xi^\pm}=i*\om_1^\pm+ b^\pm*\sinh(i*\om^\pm+i\vec{y^\pm})$.  Calculate $\vec{\psi^\pm}=\psi(\vec{\xi^\pm})$ and 
$\vec{der^\pm}=b^\pm*\cosh(i*\om^\pm+\vec{y^\pm}).
$
\item
Calculate the matrices $D^+=[1/(\xi^+_j-\xi^-_k)]$ and $D^-=[1/(\xi^-_k-\xi^+_j)]$ (the sizes are $(2*N^++1)\times (2*N^-+1)$
and $(2*N^-+1)\times (2*N^++1)$, respectively).
\item
{\sc The main block (the same block is used if the Gaver-Wynn Rho algorithm is applied).}  For given $x_1,x_2, a_1,a_2$, in the cycle in $q\in \vec{q}$, evaluate
\begin{enumerate}[(1)]
\item
$\phipq$ at points of the grid $\cL^+$ and  $\phimq$ at points of the grid $\cL^-$ 
using \eq{phipq_def}-\eq{phimq_def}: 
\[
\vec{\phi^\pm_q}=\exp\left[((\mp\ze^\pm*i/(2*\pi))*\vec{\xi^\pm}.*(\log(1+ \vec{\psi^\mp}/q)./\vec{\xi^\mp}.*\vec{der^\mp})*D^\pm)\right];
\]
\item
calculate $\phi^\pm_q$ at points of the grid $\cL^\mp$:
$
 \vec{\phi^\pm_{q,\mp}}=q./(q+\vec{\psi^\mp})./\vec{\phi^\mp_q};
 $
 \item
 evaluate the 2D integral on the RHS of \eq{tVq3}
 \beqast
 Int2(q)&=&((\ze^-*\ze^+/(2*\pi)^2)*(\exp(-i*a_2*\vec{\xi^-}).* \vec{\phi^+_{q,-}}.*\vec{der^-})*D^+)\\
 && *\mathrm{conj}((\exp((i*(a_2-a_1))*\vec{\xi^+}).* \vec{\phi^-_{q,+}}/\vec{\xi^+}.*\vec{der^+})').
 \eqast
 \item
 depending on the sign of $x_1-a_1$, use either the arrays $\vec{\xi^+}, \vec{der^+}, \vec{\psi^+}$ or 
 $\vec{\xi^-}, \vec{der^-}, \vec{\psi^-}$ to evaluate $Int1(q)$, the 1D integral on the RHS of \eq{tVq3sinh}.
 \end{enumerate}
 \item
 {\sc Laplace inversion.} Set $Int(\vec{q})=Int2(\vec{q})./\vec{q}+Int1(\vec{q})$, $Int(q_1)=Int(q_1)/2$, and,
 using the symmetry $\overline{\tV(q)}=\tV(\bar q)$, calculate
 \[
V=(\ze_{\ell}/\pi)*\mathrm{real}(\mathrm{sum}(\exp(T*\vec{q}).*Int(\vec{q}).*\vec{der_\ell})). \]

\end{enumerate}

\subsection{Numerical examples}\label{ss:numer}
Numerical results are produced using Matlab R2017b on MacBook Pro, 2.8 GHz Intel Core i7, memory 16GB 2133 MHz. 
The CPU times reported below can be significantly improved because 
\begin{enumerate}[(a)]
\item
the main block of the program, namely, evaluation
of $\tV(q)$ for a given array of $(a_1,a_2)$, is used both for complex and positive $q$'s. However, if $q>0$, we can use the well-known symmetries to decrease the sizes of arrays, hence, the CPU time.
Furthermore, the block admits the trivial parallelization;
\item
we use the same grids for the calculation of the Wiener-Hopf factors $\phi^\pm_q$ and evaluation of integrals on the RHS of  \eq{tVq3sinh}.
However, $\phi^\pm_q$ need to be evaluated only once and used for all points $(a_1,a_2)$. But if $x_1-a_2$ and $a_2-a_1$ are not very small in absolute value, then much shorter grids can be used to evaluate the integrals on the RHS of \eq{tVq3}. See  examples in \cite{iFT,paraHeston,SINHregular,Contrarian}.
Therefore, if the arrays $(x_1-a_2,a_2-a_1)$ are large, then the CPU time can be decreased  using shorter arrays 
for calculation of the integrals on the RHS of  \eq{tVq3sinh}. 
\item
 If the values $F(T,a_1,a_2)$ are needed for several values of $T$
in the range $[T_1,T_2]$, where $T_1$ is not too close to 0 and $T_2$ is not too large, then the CPU time can be significantly decreased
applying  the sinh-acceleration to the Bromwich integral. Indeed, the main step  is time independent,
and the last step, which is the only step where $T$ appears,  admits an easy parallelization. Hence, the CPU time for many values of $T$ is essentially the same as for one value of $T$. 
\end{enumerate}
Item (a), and, partially,  (b) are motivated by our aim to compare the performance of the algorithm based on the Gaver-Wynn
Rho algorithm and the one based on the sinh-acceleration applied to the Bromwich integral.
Since the same subprogram for the evaluation of $\tV(q)$ is used in both cases, and, even in the more complicated second case,
we can achieve the precision of the order of $E-14$, we can safely say that the errors in the first case are the errors of
the  Gaver-Wynn
Rho algorithm itself\footnote{We use the  Gaver-Wynn
Rho algorithm with $M=8$,
hence, 16 positive values of $q$ (depending on $T$) appear. $M=7$ does not work because the error of the  Gaver-Wynn
Rho algorithm itself is too large, $M=9$ does not work because some of the coefficients are so large that
$q\tV(q)$ must be calculated with high precision}; and these errors are of the order of $E-7$  in the cases we considered
(sometimes, larger, in other cases,
somewhat smaller), which agrees with the general empirical observation $(E-0.9)M$, for all choices  
of the parameters of the numerical scheme.  The errors remain essentially the same even if we use much finer and longer grids in the $\eta$- and $\xi$-spaces than it is necessary.
The second motivation for (b) is that we wish to give a relatively short description of the choice of the main parameters of
the numerical scheme.

In the two examples that we consider,
$X$ is KoBoL with the characteristic exponent $\psi(\xi)=c\Gamma(-\nu)(\lp^\nu-(\lp+i\xi)^\nu+(-\lm)^\nu-(-\lm-i\xi)^\nu)$, where
$\lp=1,\lm=-2$ and
(I) $\nu=0.2$, hence, the process is close to Variance Gamma; (II) 
$\nu=1.2$, hence, the process is close to NIG. In both cases, $c>0$ is chosen so that the second instantaneous moment $m_2=\psi^{\prime\prime}(0)=0.1$.
For $X_0=\barX_0=0$, 
we calculate the joint cpdf $F(T,a_1,a_2):=V(T,a_1,a_2; 0,0)$ for $T=0.25$ in Case (I) and for $T=0.05, 0.25,1,5,15$
in Case (II).  In both cases, $a_1$ is in the range $[-0.075,0.1]$ and $a_2$ in the range
$[0.025,0.175]$; the total number of points $(a_1,a_2)$, $a_1\le a_2$, is 44. The parameters of the numerical schemes are chosen as follows.

For SL-processes, and KoBoL is an SL-process,
any sinh-deformation is admissible provided 
is a subset of  $i(\mumpr,\muppr)+(\cC_+\cup\{0\})$ and
$q_1+\psi(i(\om_1-b\sin(\om))>0$ for the smallest $q=q_1>0$ used in the 
 Gaver-Wynn
Rho algorithm. IIf $q_1+\psi(i(\om_1-b\sin(\om))\le0$ then we can reduce the calculations to the case $q_1+\psi(i(\om_1-b\sin(\om))>0$ 
crossing the purely imaginary zero of $q+\psi(\xi)$  as in \cite{paired}.
In the examples that we consider,  $q_1+\psi(i(\om_1-b\sin(\om))>0$. 

For SL-processes, the choice of the most important parameters
$\om^\pm$ trivializes: $\om^\pm=\pm \pi/4\cdot\min\{1,1/\nu\}$, and the half-width $d^\pm$ of the strips of analyticity in the 
new coordinates is $d=|\om^\pm|$. It can be easily shown that, for the Merton model and Meixner processes,
one can choose  $\om^\pm=\pm \pi/8$ and $d^\pm=|\om^\pm|$ (see \cite{EfficientAmenable} for the analysis of the domain of analyticity and zeros of
$q+\psi(\xi)$ for popular L\'evy models). 
Thus, given the error tolerance $\eps$, we can easily write a universal approximate recommendation for the choice of $\ze$.
The recommendation for an approximately optimal choice of the truncation parameter $\La=N\ze$ is the same as in \cite{Contrarian}.
As in \cite{Contrarian}, typically, the recommendation  leads to grids somewhat longer  than necessary. Choosing the parameters by hand, we observe that
the results with the errors of the order of E-7, which are inevitable with the  Gaver-Wynn
Rho algorithm, can be achieved using
the sinh-acceleration in the $\xi$- and $\eta$-spaces, with grids of the length  100 or even smaller (depending on $\nu$ and $T$).
If the calculations are made using the Hilbert transform or simplified trapezoid rule without the conformal deformations, then much longer arrays will be needed (thousand times longer and more) to satisfy even larger error tolerance, and the increase 
of the speed due to the use of the fast Hilbert transform or fast convolution and fast inverse Fourier transform cannot compensate for the very large increase of the sizes of the arrays. 

If the sinh-acceleration in the Bromwich integral is used, then we can satisfy the error tolerance of the order E-14 and smaller  using
the $q$-grids of the order of 100-150, and the $\xi$- and $\eta$-grids of the order of $250-450$. We use two types of deformations: (I) $\om_\ell=(\pi/2)/9, \om_\pm=\pm (\pi/2)/4.5\cdot\min\{1,1/\nu\}$ (``+" for $\cL^+$, ``-" for $\cL^-$) and (II) $\om_\ell=(\pi/2)/10, \om_\pm=\pm (\pi/2)/5\cdot\min\{1,1/\mu\}$. Since each of the three curves has changed, the probability of a random agreement between the two results is negligible.
The differences being less than $E-14$, with some exceptions in the case $T=15$, we take these values  as the benchmark.
The errors in Tables \ref{table:cpdf1} and \ref{table:errors2} are calculated w.r.t. the benchmark probabilities. The CPU time for the benchmark probabilities is in the range 5-8 msec, for one pair $(a_1,a_2)$, and 35-60  msec for 44 points (average of 100 runs). Choosing the parameters by hand, we calculated prices
with errors somewhat smaller than the errors of the  Gaver-Wynn
Rho algorithm. The $\xi$- and $\eta$-grids can be chosen shorter than in
the case of the  Gaver-Wynn
Rho algorithm but the length of the $q$-grid is several times larger than 16 in the  Gaver-Wynn
Rho algorithm;
  the CPU time is several times larger.

 \begin{table}
\caption{\small Joint cpdf $F(T,a_1,a_2):=\bQ[X_T\le a_1, \barX_T\le a_2\ |\ X_0=\barX_0=0]$, and errors (rounded)
and CPU time (in msec) of two numerical schemes. KoBoL 
close to Variance Gamma, with an almost symmetric jump density, and no ``drift": $m_2=0.1$, $\nu=0.2, \lm=-2, \lp=1, \mu=0$;
$T=0.25$.
 }
 {\tiny
\begin{tabular}{c|c|cc|c|cc}
\hline\hline
$a_2/a_1$ & -0.075 & -0.05 & -0.025 & 0 & 0.025 \\\hline
0.025 & 0.0528532412024316 & 0.0649856679446115 & 0.0879014169039594 & 0.506498701211732 & 0.923417160799499\\
0.05 & 0.0533971065051705 & 0.0656207900757611 & 0.088669961239051 & 0.507497961893707 & 0.925278586629321\\
0.075 &  0.0536378889312989 & 0.0658957955144874 & 0.0889908892581364 & 0.50788584329118 & 0.925781540582069\\
0.1 & 0.0537738608706033 & 0.0660488001673674 & 0.0891656084917816 & 0.508089681056682 & 0.926027783268806\\
0.175 & 0.0539603399744032 & 0.0662551510091744 & 0.0893960371866527 & 0.508350135593748 & 0.92632726895684
\\\hline
\end{tabular}

\begin{tabular}{c|ccccc|ccccc}
\hline\hline
&&& $A$ & & && & B & \\\hline
$a_2/a_1$ & -0.075 & -0.05 & -0.025 & 0 & 0.025 & -0.075 & -0.05 & -0.025 & 0 & 0.025
 \\
0.025 &	-1.3E-08	& -1.4E-08 &	-2.0E-08	& 1.6E-05 &	1.5E-08 & 6.9E-11 &	4.6E-11	& 6.1E-11 &	9.5E-08 &	4.5E-09\\
0.05	& -1.4E-08 &	-1.4E-08 &	-1.9E-08 &	3.5E-05 &	1.0E-08 &
4.67E-11 &	1.9E-11 &	2.7E-11 &	9.5E-08 &	2.5E-09
\\
0.075 &	-1.4E-08	&-1.4E-08 &	-1.8E-08 &	-2.7E-05	& 1.0E-08 &
3.8E-11 &	9.2E-12 &	1.6E-11 &	9.5E-08 &	2.5E-09
 \\
0.1 &	-1.4E-08	& -1.3E-08 &	-1.7E-08 &	-6.9E-06 &	1.0E-08 &
3.7E-11 &	8.6E-12 &	1.5E-11 &	9.5E-08 &	2.5E-09
\\
0.175 & -1.3E-08 &	-1.3E-08	& -1.6E-08 &	-7.1E-07	& 1.1E-08
& 3.3E-11 &	3.9E-12 &	9.7E-12 &	9.5E-08 &	2.5E-09\\
\end{tabular}
}
\begin{flushleft}{\tiny
Errors of the benchmark values: better than e-14. CPU time per 1 point: 118, per 44 points: 1,089.\\
A:  Gaver-Wynn
Rho algorithm, $2M=16$, $N^\pm=110$. CPU time per 1 point: 6.4; per 44 points:
44.3.\\
B: SINH applied to the Bromwich integral, with $N_\ell=65, N^\pm=91$. CPU time per 1 point 13.3, per 44 points: 175.\\
If in A, $N^\pm=115$ instead of $N^\pm=110$ are used, the rounded errors do not change but the CPU time increases.\\}
\end{flushleft}

\label{table:cpdf1}
 \end{table}

 \begin{table}
\caption{\small Errors (rounded) and CPU time (in msec) of two numerical schemes for
the calculation of the joint cpdf $F(T,a_1,a_2):=\bQ[X_T\le a_1, \barX_T\le a_2\ |\ X_0=\barX_0=0]$; $T=0.25$. KoBoL 
close to NIG, with an almost symmetric jump density, and no ``drift": $m_2=0.1$, $\nu=1.2, \lm=-2, \lp=1$.
The benchmark values (for $T=0.05, 0.25, 1, 5, 15$) are in Table \ref{table:cpdf2} in Section \ref{ss:figures}. 
 }
 {\tiny
\begin{tabular}{c|ccccc|ccccc}
\hline\hline
&&& $A$ & & && & B & \\\hline
$a_2/a_1$ & -0.075 & -0.05 & -0.025 & 0 & 0.025 & -0.075 & -0.05 & -0.025 & 0 & 0.025 \\
0.025 &	2.6E-07	& 2.3E-06	& -1.1E-06	& -3.1E-06 &	4.0E-06 & 5.3E-09 &	7.5E-09 &	1.1E-08 &	1.6E-08 &	2.8E-08\\
0.05	& 1.5E-06	& 3.9E-06	& -4.9E-06 &	2.6E-07 &	2.4E-06 & 
1.7E-09 &	2.3E-09 &	3.3E-09 &	5.3E-09 &	8.0E-09\\
0.075 &	2.1E-06 &	4.8E-06 &	1.9E-07 &	-1.4E-07 &	4.1E-07 &6.3E-10 &
	8.3E-10 &	1.3E-09 &	8.1E-10 &	9.6E-10
\\
0.1 &	1.7E-06 &	4.6E-06 &	-1.7E-05 &	-1.5E-08 &	3.6E-06 &2.6E-10 &	3.4E-10 &	4.5E-10 &	5.6E-10 &	4.1E-10\\
0.175 & 1.9E-06 &	5.3E-06 &	-6.7E-06 &	6.5E-09 &	3.2E-06 &
3.5E-11 &	4.3E-11 &	5.2E-11 &	2.5E-10 &	1.4E-10\\
\end{tabular}
}
\begin{flushleft}{\tiny
Errors of the benchmark values: better than E-14. CPU time per 1 point: 305, per 44 points: 3,160.\\
A:  Gaver-Wynn
Rho algorithm, $2M=16$, $N^\pm=110$. CPU time per 1 point: 8.7; per 44 points:
58.1.\\
B: SINH applied to the Bromwich integral, with $N_\ell=79, N^\pm=115$. CPU time per 1 point 22.3, per 44 points: 203.\\
If in A, $N^\pm=115$ are used, the rounded errors do not change.\\}
\end{flushleft}

\label{table:errors2}
 \end{table}
 \begin{rem}\label{rem 1/nu}{\rm The factor $\min\{1,1/\nu\}$ is needed to ensure that the image of the strip of analyticity $S_{(-d,d)}$ in the
 $y$-coordinate under the map $y\mapsto q+\psi(\chi_{\om_1,b,\om}(y))$, used to satisfy the error tolerance for the infinite trapezoid rule, does not cross the imaginary axis. 
 }
 \end{rem}
\begin{rem}\label{rem:1nu2}{\rm 
The reader observes that in the case $\nu=0.2$ (process is close to Variance Gamma, Table \ref{table:cpdf1}), the target precision can be achieved at
a smaller computational cost than in the case $\nu=1.2$ (process is close to NIG, Table \ref{table:errors2}). For any method that does not explicitly use the conformal deformation technique, one expects that
the case $\nu=0.2$ must be much more time consuming because the integrands decay much slower than in
the case $\nu=1.2$. However,  we can use a larger 
step in the infinite trapezoid rule  in the case $\nu=0.2$, and the truncation parameter $\La=N\ze$ is essentially the same for all $\nu$ unless $\nu$ is very close to 0.
}
\end{rem}

\section{Conclusion}\label{s:concl}
In the paper,
we derive explicit formulas for the Laplace transforms of expectations of functions of a L\'evy process on $\bR$ and its running supremum,
in terms of the  EPV operators $\cE^\pm_q$ (factors in the operator form of the Wiener-Hopf factorization). If the explicit formulas can be efficiently realized for $q$'s used in a  numerical realization of the Bromwich integral,
then the expectations can be efficiently calculated. Standard applications to finance are options with barrier and lookback features, 
with flat barriers.
In the paper, we consider in detail numerical realizations for wide classes of L\'evy processes with the characteristic exponents admitting analytic continuation to a strip around or adjacent to the real axis,
equivalently, with the L\'evy density of either positive or negative jumps decaying exponentially at infinity. Thus, we allow for a stable L\'evy component of negative jumps\footnote{A polynomially decaying stable L\'evy tail is important for applications to risk management, however, from
the computational point of view, the cases of two exponentially decaying tails and only one exponentially decaying tail are essentially indistinguishable.}. The numerical part of the paper is a two-step procedure. First, we derive explicit formulas in terms of a sum of 
1D-3D integrals;
in many cases of interest, the triple integrals are reducible to double integrals over the Cartesian product of two flat contours in the complex plane. As applications,  we calculate the cpdf of the L\'evy process and its supremum $\barX$
and the price of the option to exchange $e^{\barX_T}$ for a power $e^{\be X_T}$.

The repeated integrals can be calculated using the simplified trapezoid rule and the Fast Fourier transform technique (or fast convolution or fast Hilbert transform) if
the expectations need to be calculated at many points in the state space. 
In popular L\'evy models, the characteristic exponent admits analytic continuation to a union of a strip and cone around or adjacent to
the real line. Then the computational cost can be decreased manifold using the conformal deformation technique. We  use the most efficient version: the sinh-acceleration, and explain how the deformations of several contours should be made: two contours for each $q>0$ used in the  Gaver-Wynn
Rho algorithm, and three contours if
the sinh-acceleration method is applied to the Bromwich integral. 
Numerical examples demonstrate the efficiency of the method; the conformal deformation technique applied to the Bromwich integral
achieves the precision of the order of E-14 and the Gaver-Wynn Rho algorithm - of the order of E-08-E-06. However, the latter is faster. Note that  Talbot's deformation cannot be applied if the conformal deformations technique is applied to the integrals with respect to the other dual variables.

In the accompanying papers \cite{EfficientDiscExtremum,EfficientZsufficient,EfficientStableLevyExtremum,EfficientDoubleBarrier},
 the method, results and proofs of the paper are
modified for random walks, barrier and lookback options with discrete monitoring in particular,
pricing barrier and lookback options in stable L\'evy models and double barrier options. 

The methodology of the paper can be extended in several directions, and adapted to
\begin{enumerate}
\item
Monte-Carlo simulations of the joint distribution of a L\'evy process and its extremum, similarly to
\cite{SINHregular,ConfAccelerationStable}, where an efficient procedure for the simulation of the distributions
of L\'evy processes is constructed;
 
\item
American options and  barrier and lookback options with time-dependent barriers, similarly to 
\cite{amer-put-levy,IDUU,amer-reg-sw-SIAM};
\item
 regime-switching L\'evy models, with different payoff functions in different states, similarly to 
\cite{ExitRSw,stoch-int-rate-CF,MSdouble};
\item
models with stochastic volatility and stochastic interest rates. The first step, namely, approximation by regime-switching models, is
the same as in \cite{stoch-int-rate-CF,SVolSSRN,BLHestonStIR08};
\item
models with stochastic interest rates, when the eigenfunction expansion is used to approximate the action of the infinitesimal generator
of the process for the interest rates \cite{BarrStIR};
\item
models with non-standard payoffs arising in applications to real options and Game Theory \cite{no-remorse,monopoly,preemptGames};
\item
multi-factor L\'evy models.
\end{enumerate}
In the case of pricing barrier options, the main blocks of the induction procedures can be replaced with the main block of this paper
(adjusted to the case of more general payoffs); in the case of American options, the iteration procedure at each time step cannot be applied because when the calculations are in the dual space, the positivity of the approximation to the transition operator 
is impossible to guarantee, and the iteration procedure for an approximation to the early exercise boundary at each time step
is justified only if the approximation to the transition operator is positive. Hence, the main block in the present paper can be applied
only if the time step is chosen sufficiently small and no iteration procedure at each time step is used.

  \bibliography{thebibliographyFM21}{}
\bibliographystyle{plain}

\appendix

\section{Technicalities}\label{s:tech}

\subsection{Decomposition of the Wiener-Hopf factors}\label{WHF_decomp}
The following more detailed properties of the Wiener-Hopf factors are established in \cite{NG-MBS,BLSIAM02,barrier-RLPE}
for the class of RLPE (Regular L\'evy processes of exponential type); the proof for SINH-regular processes is the same only $\xi$ is allowed to tend to $\infty$ not only in the strip of analyticity but in the union of a strip and cone. See \cite{BIL,asymp-sens,paired} for the proof of the statements below for
several classes of SINH-regular processes (the definition of the SINH-regular processes formalizing properties used in \cite{BIL,asymp-sens,paired} was suggested in \cite{SINHregular} later.). The contours  in Lemma \ref{lem:atoms} below
are in a domain of analyticity s.t. $q-i\mu\xi\neq 0$ and $1+\psi^0(\xi)/(q-i\mu\xi)\not\in (-\infty,0]$. The latter condition is needed when $\psi^0(\xi)=O(|\xi|^\nu)$ as $\xi\to\infty$ in the domain of analyticity and $\nu<1$. Clearly, in this case, for sufficiently large $q>0$, the condition holds.
In the case of RLPE's, the contours of integration in the lemma below are straight lines in the strip of analyticity 

\begin{lem}\label{lem:atoms}
Let $\mum<0<\mup$,  $q>0$, let $X$ be SINH-regular  of  type 
$((\mum,\mup), \cC_{\gam,\gap}, \cC_{\gampr,\gappr})$, $\mum<0<\mup$, and order $\nu$.  Then
\begin{enumerate}[(1)]
\item
if  $\nu\in [1,2]$ or $\nu\in (0,1)$ and the ``drift" is $\mu=0$, then neither
 $\barX_{T_q}$ nor $\uX_{T_q}$ has an atom at 0, and $\phi^\pm_q(\xi)$ admit the bounds \eq{WHFdecayP} and 
 \eq{WHFdecayM}, 
 where $\nu_\pm>0$ and $C_\pm(q)>0$ are independent of $\xi$;
 \item
  if  $\nu\in (0,1)\cup\{0+\}$  and $\mu>0$, then
 \begin{enumerate}[(a)]
 \item
 $\barX_{T_q}$ has no atom at 0 and $\uX_{T_q}$ has an atom  $a^-_q\de_0$ at zero, where
 \bbe\label{eq:cmqp}
a^-_q=\exp\left[\frac{1}{2\pi}\int_{\cL^+_{\om_1,b,\om}}\frac{\ln((1+\psi^0(\eta)/(q-i\mu\eta))}{\eta}d\eta\right],
\ee
and $\cL^+_{\om_1,b,\om}$ is a contour as in Lemma \ref{lem:WHF-SINH} (ii), lying above $0$;
 \item
for $\xi$ and $\cL^-_{\om_1,b,\om}$ in Lemma \ref{lem:WHF-SINH} (i), $\phipq(\xi)$ admits the representation
 \bbe\label{phip1finvar}
 \phipq(\xi)=\frac{q}{q-i\mu\xi}\exp\left[\frac{1}{2\pi i}\int_{\cL^-_{\om_1,b,\om}}\frac{\xi\ln(1+\psi(\eta)/(q-i\mu\eta))}{\eta(\xi-\eta)}d\eta\right],
 \ee
and  satisfies the bound \eq{WHFdecayP} with $\nup=1$;
 \item
 $\phimq(\xi)=a^-_q+\phi^{--}_q(\xi)$, where $\phi^{--}_q(\xi)$ satisfies 
 \eq{WHFdecayM} with arbitrary $\num\in (0,1-\nu)$;
 \item
 $\cEmq=a^-_qI+\cE^{--}_q$, where $\cE^{--}_q$ is the PDO with the symbol $\phi^{--}_q(\xi)$;
 \end{enumerate}
  \item
  if  $\nu\in (0,1)\cup\{0+\}$  and $\mu<0$, then
 \begin{enumerate}[(a)]
 \item
 $\uX_{T_q}$ has no atom at 0 and $\barX_{T_q}$ has an atom $a^+_q\de_0$ at zero, where
 \bbe\label{eq:cpqp}
a^+_q=\exp\left[\frac{1}{2\pi}\int_{\cL^-_{\om_1,b,\om}}\frac{\ln((1+\psi^0(\eta)/(q-i\mu\eta))}{\eta}d\eta\right],
\ee
and $\cL^-_{\om_1,b,\om}$ is a contour as in Lemma \ref{lem:WHF-SINH} (i), lying below $0$;
 \item
for $\xi$ and $\cL^+_{\om_1,b,\om}$ in Lemma \ref{lem:WHF-SINH} (ii), $\phimq(\xi)$ admits the representation
 \bbe\label{phim1finvar}
 \phimq(\xi)=\frac{q}{q-i\mu\xi}\exp\left[-\frac{1}{2\pi i}\int_{\cL^+_{\om_1,b,\om}}\frac{\xi\ln(1+\psi(\eta)/(q-i\mu\eta))}{\eta(\xi-\eta)}d\eta\right],
 \ee
and
 satisfies the bound \eq{WHFdecayM} with $\num=1$;
 \item
 $\phipq(\xi)=a^+_q+\phi^{++}_q(\xi)$, where $\phi^{++}_q(\xi)$ satisfies  
 \eq{WHFdecayP} with arbitrary $\nup\in (0,1-\nu)$;
 \item
 $\cEpq=a^+_qI+\cE^{++}_q$, where $\cE^{++}_q$ is the PDO with the symbol $\phi^{++}_q(\xi)$;
 \end{enumerate}
 \end{enumerate}
\end{lem}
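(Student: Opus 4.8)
The plan is to read off all three cases from one asymptotic analysis of the integral representations of $\phipq$ and $\phimq$ in Lemma \ref{phipmq_explicit1} (equivalently Lemma \ref{lem:WHF-SINH}), performed as $\xi\to\infty$ in the union of a strip and a cone rather than along a horizontal line. The organizing device is to split off the drift: since
\[
1+\frac{\psi(\eta)}{q}=\frac{q-i\mu\eta}{q}\left(1+\frac{\psi^0(\eta)}{q-i\mu\eta}\right),
\]
we have $\ln(1+\psi(\eta)/q)=\ln\frac{q-i\mu\eta}{q}+\ln\bigl(1+\psi^0(\eta)/(q-i\mu\eta)\bigr)$, where, for $q$ large enough, \eq{cone_nupr} guarantees $1+\psi^0(\eta)/(q-i\mu\eta)\notin(-\infty,0]$ on the deformed contours, so both logarithms are single-valued. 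The second, purely analytic, ingredient is the kernel identity
\[
\frac{\xi}{\eta(\xi-\eta)}=\frac{1}{\eta}+\frac{1}{\xi-\eta},
\]
whose first term produces the limit of $\ln\phi^\pm_q$ at infinity (hence, on exponentiating, the atom) and whose second term supplies the decay of the remainder.

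Case (1) is the classical situation, and I would simply repeat the RLPE computation of \cite{NG-MBS,BLSIAM02,barrier-RLPE}. When $\nu\ge 1$ or $\nu=1+$, or when $\mu=0$, the bound \eq{cone_nupr} shows that $\Re\psi(\xi)$ grows like $|\xi|^\nu$ throughout the cone, and the stationary-phase/Wiener-Hopf estimate of the integral gives $\phipq(\xi)=O(|\xi|^{-\nup})$, $\phimq(\xi)=O(|\xi|^{-\num})$ with $\nup,\num>0$ (and $\nup+\num=\nu$). This is \eq{WHFdecayP}--\eq{WHFdecayM} with $a^\pm_q=0$, and $\phi^\pm_q(\xi)\to 0$ means that neither $\barX_{T_q}$ nor $\uX_{T_q}$ has an atom at $0$. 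The only departure from the cited argument is that $\xi$ tends to infinity along the cone, which is licensed by the asymptotics \eq{asympsisRLPE}.

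For case (2) ($\mu>0$, $\nu<1$) the drift split becomes effective. Shifting the contour in the drift part $\ln\frac{q-i\mu\eta}{q}$ and collecting the single pole of $q/(q-i\mu\eta)$ produces the factor $q/(q-i\mu\xi)$, which is analytic and nonvanishing in $\{\Im\xi\ge\mum\}$, i.e.\ a genuine ``$+$'' factor; this gives \eq{phip1finvar}, and, since $q/(q-i\mu\xi)=O(|\xi|^{-1})$ while the jump factor is bounded, the bound \eq{WHFdecayP} with $\nup=1$. Because $\phipq$ carries this drift factor it vanishes as $\Im\xi\to+\infty$, so $\barX_{T_q}$ has no atom. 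The factor $\phimq$ receives no drift contribution (for $\mu>0$ the drift acts only upward), so it equals its jump part; here $\ln(1+\psi^0(\eta)/(q-i\mu\eta))=O(|\eta|^{\nu-1})$ is integrable against $1/\eta$ because $\nu<1$, so the first term of the kernel identity converges to a nonzero constant and the second term is $O(|\xi|^{\nu-1})$. That constant is precisely the atom $a^-_q$ of $\uX_{T_q}$ at $0$, and its value is \eq{eq:cmqp}; the remainder estimate gives $\phimq=a^-_q+\phi^{--}_q$ with $\phi^{--}_q$ obeying \eq{WHFdecayM} for any $\num\in(0,1-\nu)$, whence also $\cEmq=a^-_qI+\cE^{--}_q$. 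Case (3) is the mirror image under $\xi\mapsto-\xi$, $\mu\mapsto-\mu$, interchanging $\barX$ with $\uX$ and the upper with the lower half-plane (Remark \ref{rem:inf}).

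The main obstacle is the cone-uniform estimate of the integral representation that has to deliver both the finite limit at infinity (the atom) and the sharp decay of the remainder at once. The delicate point is that the kernel decays only like $1/\eta$ along the contour, so the decay of $\phi^{\pm\pm}_q$ is extracted solely from the smallness of $\ln(1+\psi^0/(q-i\mu\eta))$ near the nose of the contour together with the cancellation $\xi/(\xi-\eta)-1=\eta/(\xi-\eta)$; carrying this out uniformly as the contour opens into the cone, while simultaneously keeping $1+\psi^0(\xi)/(q-i\mu\xi)\notin(-\infty,0]$ so that the logarithm stays single-valued and the deformation legitimate, is exactly where the SINH-regularity hypotheses \eq{eq:reprpsi}--\eq{asympsisRLPE} are needed.
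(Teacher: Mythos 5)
Your proposal is correct and takes essentially the same route the paper itself relies on: the paper offers no self-contained proof of Lemma \ref{lem:atoms}, but invokes the RLPE computation of \cite{NG-MBS,BLSIAM02,barrier-RLPE} with the sole modification that $\xi\to\infty$ in the union of a strip and a cone, and that computation is precisely your drift-splitting $q+\psi(\eta)=(q-i\mu\eta)\bigl(1+\psi^0(\eta)/(q-i\mu\eta)\bigr)$ combined with the kernel identity $\xi/(\eta(\xi-\eta))=1/\eta+1/(\xi-\eta)$, the $1/\eta$ term giving the constant limit (the atom $a^\mp_q$) and the $1/(\xi-\eta)$ term the $O(|\xi|^{\nu-1}\ln|\xi|)$ remainder, hence any $\nu_\mp\in(0,1-\nu)$. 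One cosmetic correction: the factor $q/(q-i\mu\xi)$ in \eq{phip1finvar} arises from the kernel pole at $\eta=\xi$ when the drift part $\ln\bigl((q-i\mu\eta)/q\bigr)$ --- analytic in the upper half-plane, its branch point $-iq/\mu$ lying below --- is integrated and the contour closed upward, not from ``the single pole of $q/(q-i\mu\eta)$''; this does not affect the argument.
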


  \subsection{Proof of bounds \eq{bound_L1_1} and \eq{getaxi}}\label{ss:proof_lemm_g}
   First, we prove
   that  if $a,b>0$, then
  $g_{a,b}$ defined by  $g_{a,b}(\xi_1,\xi_2)=(1+|\xi_1+\xi_2|)^{-a}(1+|\xi_1|)^{-1-b}(1+|\xi_2|)^{-1}$ is of class $L_1(\bR^2)$.
   Consider separately regions $U_j\subset \bR^2$, $j=1,2,3,$
  defined by inequalities $|\xi_2|\le |\xi_1|/2$; $|\xi_2|\ge 2|\xi_1|$; $|\xi_1|/2\le|\xi_2|\le 2|\xi_1|$, respectively. On $U_1$,
  \[
  g_{a,b}(\xi_1,\xi_2)\le C_1(1+|\xi_1|)^{-1-a-b}(1+|\xi_2|)^{-1}\le C_2(1+|\xi_1|)^{-1-b}(1+|\xi_2|)^{-1-a},
  \]
  and the function on the RHS is of class $L_1(\bR^2)$. On $U_2$, $g_{a,b}(\xi_1,\xi_2)$ admits an upper bound via
  the same function (and a different constant $C_2$). Finally,
  \[
  \int_{U_3}d\xi_1d\xi_2\, g_{a,b}(\xi_1,\xi_2)\le C_3\int_{\bR} d\xi_1\, \ln(2+|\xi_1|)(1+|\xi_1|)^{-1-b}<\infty,
  \]
  which proves \eq{bound_L1_1}. To prove \eq{getaxi},
   we consider the restrictions of $g$ to the 
   regions $U_j\subset \bR^3$, $j=1,2,3,$
    defined by the inequalities $|\eta|\le |\xi_1+\xi_2|/2$; $|\eta|\ge 2|\xi_1+\xi_2|$; $|\xi_1+\xi_2|/2\le|\eta|\le 2|\xi_1+\xi_2|$.
    On $U_1$,
    \beqast
     |g(\eta,\xi_1,\xi_2)|&\le& C_1(1+|\eta|)^{-\nup}(1+|\xi_1+\xi_2|)^{-1}(1+|\xi_1|)^{-1-\num}(1+|\xi_2|)^{-1}\\
     &\le& C_2(1+|\eta|)^{-\nup/2-1}(1+|\xi_1+\xi_2|)^{-\nup/2}(1+|\xi_1|)^{-1-\num}(1+|\xi_2|)^{-1},
     \eqast
    on $U_2$,
    \beqast
     |g(\eta,\xi_1,\xi_2)|&\le& C_1(1+|\eta|)^{-\nup-1}(1+|\xi_1|)^{-1-\num}(1+|\xi_2|)^{-1}\\
     &\le& C_2(1+|\eta|)^{-\nup/2-1}(1+|\xi_1+\xi_2|)^{-\nup/2}(1+|\xi_1|)^{-1-\num}(1+|\xi_2|)^{-1}.
     \eqast
  In each case,  the function on the RHS' is of the form $C(1+|\eta|)^{-1-\nup/2}g_{\nup/2,\num}(\xi_1,\xi_2)$, hence, of class $L_1(\bR^3)$. To prove the integrability of $g$ on $U_3$, it suffices to note that
    \beqast
    \int_{|\xi_1+\xi_2|/2\le |\eta|\le 2|\xi_1+\xi_2|}d\eta\, |g(\eta,\xi_1,\xi_2)|&\le& C_3\ln(2+|\xi_1+\xi_2|)g_{\nup,\num}(\xi_1,\xi_2),
    \eqast
    and the RHS admits an upper bound via $C_4 g_{\nup/2,\num/2}(\xi_1,\xi_2)$.
    
    \subsection{Proof of Proposition \ref{prop:exch_beta}}\label{ss:proof_Prop_prop:exch_beta}
We apply Theorem \ref{thm:mainFT} with
 $\muppr\in (0,\mup)$, $\mumpr\in (\mum,-\be)$.
For $x_2> 0$ and $\xi\in\bC$, 
 \beqast\label{widetildef1be}
  (\widehat{f_+})_1(\xi_1, x_2)&=&\int_{x_2/\be}^{x_2}e^{-ix_1\xi_1}(e^{\be x_1}-e^{x_2})dx_1\\
  &=&e^{-ix_2\xi_1}\left(\frac{e^{x_2\be}}{\be-i\xi_1}
  +\be \frac{e^{x_2(1+i\xi_1(1-1/\be))}}{(\be-i\xi_1)(-i\xi_1)}-\frac{e^{x_2}}{-i\xi_1}\right),
  \eqast
  hence, the first term on the RHS of \eq{eq:tVqmain} equals the integral on the RHS of \eq{eq:I2}.
Then we calculate 
\beqast
w_0(y,x_2)&=&\bfo_{[x_2,+\infty)}(y)((e^{\be y}-e^{y})-(e^{\be y}-e^{x_2}))=\bfo_{[x_2,+\infty)}(y)(e^{x_2}-e^{y}),
\\
\widehat{w_0}(\eta,x_2)&=&\int_{x_2}^{+\infty} e^{-iy\eta}(e^{x_2}-e^{y})dy=
 \frac{e^{x_2-ix_2\eta}}{i\eta(1-i\eta)},
\eqast
and obtain that the second term on the RHS of \eq{eq:tVqmain} equals the RHS of \eq{eq:I3}.
Next, we calculate $\hat w^-(q,\eta,x_2)$:
\beqast
\hat w^-(q,\eta,x_2)&=&\int_{x_2}^{+\infty}e^{-iy\eta}\frac{1}{2\pi}\int_{\Im\xi_1=\om_1}d\xi_1\,e^{iy\xi_1}\phi^{--}_q(\xi_1)
\left[\frac{e^{(\be-i\xi_1)y}-e^{(\be-i\xi_1)x_2}}{\be-i\xi_1}\right.\\
&&\left.+\be\frac{e^{(1-i\xi_1/\be)y}-e^{(1-i\xi_1/\be)x_2}}{(\be-i\xi_1)(-i\xi_1)}-\frac{e^{(1-i\xi_1)y}-e^{(1-i\xi_1)x_2}}{-i\xi_1}\right]\\
&=&\frac{e^{-ix_2\eta}}{2\pi}\int_{\Im\xi_1=\om_1}\phi^{--}_q(\xi_1)\left[\frac{e^{(\be-i\xi_1)x_2}}{\be-i\xi_1}\left(\frac{1}{i(\eta-\xi_1)-(\be-i\xi_1)}-\frac{1}{i(\eta-\xi_1)}\right)\right.\\
&&\hskip2.5cm +\frac{\be e^{(1-i\xi_1/\be)x_2}}{(\be-i\xi_1)(-i\xi_1)}\left(\frac{1}{i(\eta-\xi_1)-(1-i\xi_1/\be)}-\frac{1}{i(\eta-\xi_1)}\right)
\\
&&\hskip2.5cm\left.-\frac{e^{(1-i\xi_1)x_2}}{-i\xi_1}\left(\frac{1}{i(\eta-\xi_1)-(1-i\xi_1)}-\frac{1}{i(\eta-\xi_1)}\right)\right]\\
&=&\frac{e^{-ix_2\eta}}{2\pi}\int_{\Im\xi_1=\om_1}d\xi_1\,\frac{\phi^{--}_q(\xi_1)}{i(\eta-\xi_1)}\left[\frac{e^{(\be-i\xi_1)x_2}}{i\eta-\be}
\right.\\
&&\hskip1.5cm \left.+\frac{\be e^{(1-i\xi_1/\be)x_2}(1-i\xi_1/\be)}{(\be-i\xi_1)(-i\xi_1)(i\eta-1-i\xi_1(1-1/\be))}-\frac{e^{(1-i\xi_1)x_2}(1-i\xi_1)}{(-i\xi_1)(i\eta-1)}\right],
\eqast
and, finally, derive the representation \eq{eq:I4} for the double integral on the RHS of \eq{eq:tVqmain}.

     \subsection{General remarks on numerical Laplace inversion}\label{ss:gen_rem}
     The final result is obtained applying a chosen numerical Laplace inversion procedure to
     $\tV(q,\cdot,\cdot)$ defined by  \eq{tVq3}.
     The methods that we construct (main texts: \cite{paraLaplace,paired,BarrStIR,Contrarian}) can be regarded as  further steps in  a general program of study of the efficiency
of combinations of one-dimensional inverse transforms for high-dimensional inversions systematically pursued by
Abate-Whitt, Abate-Valko \cite{AbWh,AbWh92OR,AbateValko04,AbValko04b,AbWh06} and other authors.
Additional methods can be found in \cite{stenger-book}.
Abate and Valko and Abate and Whitt  consider three main
different one-dimensional algorithms for the numerical realization of the Bromwich
integral:
(1) Fourier series expansions with Euler summation (the summation-by-part formula
in Sect. \ref{ss:sum-by-parts}  can be regarded as a special case of Euler summation);
 (2)
combinations
of Gaver functionals, and
(3) deformation of the contour in the Bromwich
integral. Talbot's contour deformation
$q=r\theta(\cot\theta+i), -\pi<\theta<\pi$, is suggested,
and various methods of multi-dimensional inversion
based on combinations of these three basic blocks are discussed. 
It is stated that
for the popular Gaver-Stehfest method, 
the required system precision is about $2.2*M$, and
about $0.9*M$ significant digits are produced for $f(t)$ with good transforms. ``Good" means that $f$ is of class $C^\infty$, and
the transform's singularities are on the negative real axis. If the transforms are not good, then the number of significant digits may be not
 so great and may be not  proportional to $M$. In our previous publications
\cite{paraLaplace,paired}, we develop numerical methods for pricing barrier and lookback options based on the fractional-parabolic deformations, and observed that when we were able to evaluate $\tV(q)$ with the precision E-10 and better, the Gaver-Stehfest method
with $M=8$,
produced fairly accurate results (errors of the order of E-4 or even E-5) although, according to
the general remark in  \cite{AbWh06}, $\tV(q)$'s had to be calculated with the precision E-15. However, in many cases,
the fractional-parabolic acceleration requires too long grids and the accumulation of errors 
of the calculation of the Wiener-Hopf factors leads to the failure of the Gaver-Stehfest method. If the simplified trapezoid rule, without acceleration,
is applied to   the integral under the exponential sign on the RHS' of \eq{phip1} and \eq{phim1}, then the arrays of the size of the order of $10^{9}$ and more are needed. Hence, sufficiently accurate calculations  (nothing to say fast)  are impossible. Indeed, the integrands decay slower than $|\eta|^{-2}$ as $\eta\to \infty$ in the strip of analyticity.

In \cite{MSdouble,single,BLdouble}, it is demonstrated that Carr's randomization (equivalently, the method of lines) allows one to calculate prices of single and double barrier options and barrier options in regime-switching models
with the precision of the order of E-02-E-03 because Carr's randomization procedure works even if 
the calculations at each time step are with the precision of the order of E-04-E-05 only. In \cite{single,BLdouble}, calculations are relatively fast
because grids of different sizes
for the evaluation of the Wiener-Hopf factors and fast convolution at each time step and  the refined version of the inverse FFT (iFFT)
constructed in \cite{single}  are used (standard iFFT and fractional iFFT do not suffice in the majority of cases). 
In \cite{MSdouble}, regime-switching hyper-exponential jump diffusion models are considered, hence, the Wiener-Hopf factors
are easy to calculate with the precision E-14.

In the present paper, as in \cite{Contrarian}, we use the sinh-acceleration to evaluate the Wiener-Hopf factors. 
The summation of several hundreds of terms suffices to achieve the precision better than E-15, hence, the effect of accumulation of machine errors is insignificant, and we can calculate $\tV(q)$ with the precision E-14 and better. Thus, the errors of our method that uses 
 the  Gaver-Wynn
Rho algorithm which we document are the errors of the GWR algorithm itself. These errors are in the range
E-05-E-8, depending on the parameters of the model, $T$ and $a_1,a_2$. For the sake of brevity, we produce the results for
$x_1=x_2=0$; $T, a_1$ and $a_2$ vary.

More accurate results are obtained when we apply the sinh-acceleration to the Bromwich integral. The CPU cost increases several times because the number of $q$'s used is several times larger; but we can achieve the precision E-14 and better. 
Note that Talbot's deformation \cite{Talbot79} is not applicable together with the sinh-deformations
of the other contours of integration, hence, the CPU time is significantly larger and good precision is impossible to achieve in many cases when the sinh-acceleration is very efficient. 
Hence, the best two versions are: the Gaver-Wynn Rho algorithm, if the accuracy of the final result of the order 
of E-6 is admissible, and the sinh-acceleration applied to the Bromwich integral if a higher precision is needed. In both cases,
the Wiener-Hopf factors and $\tV(q)$'s are calculated using the sinh-acceleration.

The sinh-acceleration is similar to but simpler to apply than the saddle-point method (see., e.g., \cite{fedoryuk});  the rate of convergence is approximately the same.  The former method is
more flexible than the latter, in applications to repeated integrals especially. The rate of convergence is approximately the same, and the calculation of individual terms in
numerical realizations is much simpler and less time consuming. Talbot's deformation \cite{Talbot79} is not applicable together with the sinh-deformations
of the other contours of integration, hence, the CPU time is significantly larger and good precision is impossible to achieve in many cases when the method of the paper is vey efficient.
 
   \subsection{Infinite trapezoid rule}\label{infTrap} 

Let $g$ be analytic in the strip
$S_{(-d,d)}:=\{\xi\ | \Im\xi\in (-d,d)\}$ and decay at infinity sufficiently fast so that
$\lim_{A\to \pm\infty}\int_{-d}^d |g(i a+A)|da=0,$
and 
\bbe\label{Hnorm}
H(g,d):=\|g\|_{H^1(S_{(-d,d)})}:=\lim_{a\downarrow -d}\int_\bR|g(i a+ y)|dy+\lim_{a\uparrow d}\int_\bR|g(i a+y)|dy<\infty
\ee
is finite. We write $g\in H^1(S_{(-d,d)})$. The integral
$I=\int_\bR g(\xi)d\xi$
can be evaluated using the infinite trapezoid rule
\bbe\label{inftrap}
I\approx \ze\sum_{j\in \bZ} g(j\ze),
\ee
where $\ze>0$. 
The following key lemma is proved in \cite{stenger-book} using the heavy machinery of sinc-functions. A simple proof can be found in
\cite{paraHeston}.
\begin{lem}[\cite{stenger-book}, Thm.3.2.1]
The error of the infinite trapezoid rule admits an upper bound 
\bbe\label{Err_inf_trap}
{\rm Err}_{\rm disc}\le H(g,d)\frac{\exp[-2\pi d/\ze]}{1-\exp[-2\pi d/\ze]}.
\ee
\end{lem}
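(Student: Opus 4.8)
The plan is to show that the infinite trapezoid rule reproduces $I=\int_\bR g(\xi)\,d\xi$ exactly up to the \emph{aliasing} contributions coming from the Fourier transform of $g$, and then to control these contributions using the analyticity of $g$ in the strip $S_{(-d,d)}$. First I would invoke the Poisson summation formula in its scaled form. Writing $\hat g(\om)=\int_\bR g(\xi)e^{-i\om\xi}d\xi$, one has $\ze\sum_{j\in\bZ}g(j\ze)=\sum_{k\in\bZ}\hat g(2\pi k/\ze)$. Since $\hat g(0)=I$, subtracting the integral identifies the discretization error as the tail of the aliasing sum,
\[
\mathrm{Err}_{\mathrm{disc}}=\ze\sum_{j\in\bZ}g(j\ze)-I=\sum_{k\ne0}\hat g(2\pi k/\ze).
\]
The hypothesis $g\in H^1(S_{(-d,d)})$ together with the decay assumption guarantees that $g$ and $\hat g$ decay fast enough for this identity to hold and for every sum below to converge absolutely.

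Next I would estimate each aliasing term $\hat g(2\pi k/\ze)$, $k\ne0$, by shifting the contour of integration to the boundary of the strip. For $k>0$ the factor $e^{-i\om\xi}$ with $\om=2\pi k/\ze>0$ decays in the lower half-plane, so I would move the line $\Im\xi=0$ down to $\Im\xi=-d$; the vertical sides of the corresponding rectangle vanish precisely by the stated condition $\lim_{A\to\pm\infty}\int_{-d}^d|g(ia+A)|\,da=0$, which legitimizes Cauchy's theorem. Evaluating on the shifted contour gives
\[
|\hat g(2\pi k/\ze)|\le e^{-2\pi k d/\ze}\int_\bR|g(y-id)|\,dy,\qquad k>0,
\]
and, symmetrically, shifting up to $\Im\xi=+d$ for $k<0$ produces the factor $e^{-2\pi|k|d/\ze}$ multiplying $\int_\bR|g(y+id)|\,dy$.

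Finally I would sum the two geometric series. The key point is that the positive-$k$ terms are bounded by the lower-boundary integral and the negative-$k$ terms by the upper-boundary integral, while $H(g,d)$ in \eq{Hnorm} is the \emph{sum} of exactly these two boundary integrals. Combining them therefore yields
\[
|\mathrm{Err}_{\mathrm{disc}}|\le H(g,d)\sum_{k\ge1}e^{-2\pi k d/\ze}=H(g,d)\,\frac{\exp[-2\pi d/\ze]}{1-\exp[-2\pi d/\ze]},
\]
which is \eq{Err_inf_trap}. The main subtlety, and the reason the constant is $H(g,d)$ with no spurious factor of $2$, is precisely this splitting of the aliasing sum across the two boundaries; the only genuinely technical step is the contour shift, and that is handed to us directly by the integrability and vanishing hypotheses imposed on $g$.
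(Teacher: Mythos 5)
You should first note that the paper contains no internal proof of this lemma: it cites \cite{stenger-book}, Thm.~3.2.1 (proved there with sinc-function machinery) and refers to \cite{paraHeston} for a simple proof, so there is nothing in this text to compare against line by line. Your Poisson-summation (aliasing) argument is the standard short proof and its core is sound: the scaled Poisson identity $\ze\sum_{j\in\bZ}g(j\ze)=\sum_{k\in\bZ}\hat g(2\pi k/\ze)$ isolates the discretization error as $\sum_{k\neq 0}\hat g(2\pi k/\ze)$; the shift of the contour to $\Im\xi=-d$ for $k>0$ and to $\Im\xi=+d$ for $k<0$ is legitimate because the vertical segments vanish by the hypothesis $\lim_{A\to\pm\infty}\int_{-d}^d|g(ia+A)|\,da=0$, and it yields $|\hat g(2\pi k/\ze)|\le e^{-2\pi |k| d/\ze}$ times the corresponding one-sided boundary integral; and your observation that the positive-$k$ and negative-$k$ halves of the aliasing sum are controlled by the two different boundary limits, whose sum is exactly $H(g,d)$ of \eq{Hnorm}, is precisely why the bound \eq{Err_inf_trap} carries no spurious factor of $2$.

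The one step you assert rather than establish is the applicability of Poisson summation itself. Membership in $H^1(S_{(-d,d)})$ gives (by your own contour shift) exponential decay of $\hat g(2\pi k/\ze)$, but the absolute convergence of the lattice sum $\sum_j g(j\ze)$ and the pointwise validity of the identity do not follow from ``$g$ decays fast enough'': continuity plus $g\in L_1(\bR)$ is not sufficient for Poisson. The gap is fillable within the stated hypotheses: Cauchy's formula on the boundary of the strip gives $g'(\xi)=\frac{1}{2\pi i}\int_{\partial S_{(-d,d)}}\frac{g(\eta)}{(\eta-\xi)^2}\,d\eta$ for real $\xi$, and since $\int_\bR |\eta-\xi|^{-2}d\xi=\pi/d$ for $\eta$ on the boundary, $\|g'\|_{L_1(\bR)}\le H(g,d)/(2d)$; together with $g\in L_1(\bR)$ (e.g., by the three-lines convexity of $a\mapsto\log\|g(\cdot+ia)\|_{L_1}$) this makes $g$ integrable, vanishing at infinity, and of bounded variation, so $\sum_j|g(j\ze)|<\infty$ and the Poisson identity holds. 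Alternatively, the classical route of \cite{stenger-book} avoids Poisson altogether: one represents the error by the residue theorem as boundary integrals against the kernels $e^{\pm i\pi\zeta/\ze}/(2i\sin(\pi\zeta/\ze))$ and uses $|\sin(\pi(y\mp id)/\ze)|\ge\sinh(\pi d/\ze)$, so that $e^{-\pi d/\ze}/\sinh(\pi d/\ze)=2e^{-2\pi d/\ze}/(1-e^{-2\pi d/\ze})$ reproduces the same constant while simultaneously proving convergence of the trapezoid sum. Your route buys conceptual transparency (the error is literally the aliasing tail); the kernel route buys a self-contained justification under exactly the $H^1$ hypotheses.
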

Once
an approximately bound for $H(g,d)$ is derived, it becomes possible to satisfy the desired error tolerance
with a good accuracy.  

\subsection{Summation by parts}\label{ss:summation_by_parts}\label{ss:sum-by-parts}
 The rate of decay of the series can be significantly increased
if the infinite trapezoid rule 
 is of the form 
\[
I(a)=\ze\sum_{j\in \bZ} e^{-i aj\ze}g(j\ze),
\]
where $a\in\bR\setminus 0$, and $g'(y)$ decreases faster than $g(y)$ as $y\to\pm\infty$. 
Indeed, then, by the mean value theorem,  
the finite differences $\De g_j=(\De g)(j\ze)$, where $(\De g)(\xi)=g(\xi+\ze)-g(\xi)$, decay faster 
than $g(j\ze)$ as $j\to\pm\infty$ as well. 

The summation by parts formula is as follows. Let $e^{i a\ze}-1\neq 0$. Then
\[
\ze\sum_{j\in \bZ} e^{-i aj\ze }g(j\ze)=\frac{\ze}{e^{i a\ze}-1}\sum_{j\in \bZ} e^{-i aj\ze }\De g_j.
\]
If additional differentiations further increase the rate of decay of the series as $j\to\pm\infty$, then the summation by part procedure can be iterated:
\bbe\label{e:sum_by_part}
\ze\sum_{j\in \bZ} e^{-i aj\ze }g_j=\frac{\ze}{(e^{i a \ze}-1)^n}\sum_{j\in \bZ} e^{- i aj\ze }\De^n g_j.
\ee
After the summation by parts, the series on the RHS of \eq{e:sum_by_part} needs to be truncated.
The truncation parameter can be chosen using the following lemma.
\begin{lem}\label{lem:err_sum_by_parts}
Let $n\ge 1, N>1$ be  integers, $\ze>0, a\in \bR$ and $e^{i a\ze }-1\neq 0$. 

Let $g^{(n)}$ be continuous and let the function
$\xi\mapsto G_n(\xi,\ze):=\max_{\eta\in [\xi,\xi+n\ze]}|g^{(n)}(\eta)|$ be in $L_1(\bR)$.
Then
\beqa\label{trunc_err_sum_by_parts_pos}
\left|\frac{\ze}{(e^{i a\ze}-1)^n} \sum_{j\ge N} e^{-i aj\ze }\De^n g_j\right|&\le& \left(\frac{\ze}{|e^{i a\ze}-1|}\right)^n\int_{N\ze}^{+\infty}G_n(\xi,\ze)d\xi,
\\\label{trunc_err_sum_by_parts_neg}
\left|\frac{\ze}{(e^{i a\ze}-1)^n} \sum_{j\le -N} e^{-i a j\ze }\De^n g_j\right|&\le& \left(\frac{\ze}{|e^{i a\ze}-1|}\right)^n\int_{-\infty}^{-N\ze}G_n(\xi,\ze)d\xi.
\eqa
\end{lem}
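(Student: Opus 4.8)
The plan is to reduce both inequalities to a single elementary tail estimate on the forward differences $\Delta^n g_j=(\Delta^n g)(j\ze)$. Since $|e^{-iaj\ze}|=1$, pulling the scalar $\ze/(e^{ia\ze}-1)^n$ out of the sum and applying the triangle inequality shows that it suffices to prove
\[
\sum_{j\ge N}|\Delta^n g_j|\le \ze^{n-1}\int_{N\ze}^{+\infty}G_n(\xi,\ze)\,d\xi
\]
to obtain \eq{trunc_err_sum_by_parts_pos}, and the analogous bound with $\sum_{j\le -N}$ and $\int_{-\infty}^{-N\ze}$ to obtain \eq{trunc_err_sum_by_parts_neg}; multiplying through by $(\ze/|e^{ia\ze}-1|)^n$ then reproduces the stated constants. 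Thus the oscillating factor and the geometric prefactor play no role beyond this bookkeeping, and all the content sits in the tails of $\sum|\Delta^n g_j|$.

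The first step is the integral representation of the iterated forward difference. Because $g^{(n)}$ is continuous, repeated application of the fundamental theorem of calculus together with Fubini's theorem gives
\[
(\Delta^n g)(\xi)=\int_{[0,\ze]^n}g^{(n)}(\xi+s_1+\cdots+s_n)\,ds_1\cdots ds_n .
\]
I would then integrate out all variables but $s_1$: bounding the integrand by its maximum over $s_2+\cdots+s_n\in[0,(n-1)\ze]$ and using that the cube $[0,\ze]^{n-1}$ has measure $\ze^{n-1}$ yields
\[
|(\Delta^n g)(\xi)|\le \ze^{n-1}\int_0^\ze\Big(\max_{\eta\in[\xi+s_1,\,\xi+s_1+(n-1)\ze]}|g^{(n)}(\eta)|\Big)\,ds_1 .
\]

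The decisive step is to enlarge the length-$(n-1)\ze$ maximizing interval to a length-$n\ze$ interval so that $G_n$ appears, choosing the direction of enlargement to match the required limits of integration. For the positive tail I would enlarge to the right, so that the inner maximum is at most $G_n(\xi+s_1,\ze)$; setting $\xi=j\ze$ gives $|\Delta^n g_j|\le \ze^{n-1}\int_{j\ze}^{(j+1)\ze}G_n(u,\ze)\,du$, and summing these disjoint cells over $j\ge N$ telescopes to $\ze^{n-1}\int_{N\ze}^{+\infty}G_n$. For the negative tail I would instead enlarge to the left, obtaining $G_n(\xi+s_1-\ze,\ze)$ and hence $|\Delta^n g_j|\le \ze^{n-1}\int_{(j-1)\ze}^{j\ze}G_n(u,\ze)\,du$, whose cells tile $(-\infty,-N\ze]$ when summed over $j\le -N$.

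The routine points are the justification of the integral representation (continuity of $g^{(n)}$, and $G_n(\cdot,\ze)\in L_1(\bR)$ to guarantee absolute convergence and legitimize interchanging summation with integration). The one genuinely delicate point, and the step I would be most careful about, is precisely this choice of enlargement direction. Enlarging a maximizing interval can only increase the maximum, so both the rightward and the leftward enlargement are legitimate upper bounds; but only the rightward one produces the clean limit $N\ze$ for the $\sum_{j\ge N}$ tail, and only the leftward one produces $-N\ze$ for the $\sum_{j\le -N}$ tail. Using a single symmetric bound for both tails would introduce an off-by-one shift of one cell in the limits of integration, so matching \eq{trunc_err_sum_by_parts_pos} and \eq{trunc_err_sum_by_parts_neg} exactly hinges on treating the two tails asymmetrically.
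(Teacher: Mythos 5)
Your proof is correct, and it is in fact a sharpened version of the paper's argument rather than a literal match. The paper's entire proof is the mean-value-theorem chain
\[
|(\De^n g)(\xi)|\le \ze\max_{\xi_1\in[\xi,\xi+\ze]}|(\De^{n-1}g')(\xi_1)|\le\cdots\le \ze^n\max_{\eta\in[\xi,\xi+n\ze]}|g^{(n)}(\eta)|=\ze^n G_n(\xi,\ze),
\]
and it leaves the passage from this pointwise bound $|\De^n g_j|\le \ze^n G_n(j\ze,\ze)$ to the integral tail bounds implicit. That passage is not automatic: $\ze\sum_{j\ge N}G_n(j\ze,\ze)$ is a Riemann sum which need not be dominated by $\int_{N\ze}^{+\infty}G_n(\xi,\ze)\,d\xi$ for a general nonnegative integrable function, since $G_n(\cdot,\ze)$ is not monotone, and bounding $G_n(j\ze,\ze)$ by values of $G_n$ on an adjacent cell forces an enlargement of the maximizing window (what one gets cheaply from the pointwise bound is only the weaker estimate with $G_{n+1}$, or with the integral starting one cell earlier). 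Your replacement of the last mean-value step by the exact representation $(\De^n g)(\xi)=\int_{[0,\ze]^n}g^{(n)}(\xi+s_1+\cdots+s_n)\,ds_1\cdots ds_n$, keeping $s_1$ as an integration variable so that $|\De^n g_j|\le \ze^{n-1}\int_{j\ze}^{(j+1)\ze}G_n(u,\ze)\,du$ for the right tail and, after leftward enlargement, $|\De^n g_j|\le \ze^{n-1}\int_{(j-1)\ze}^{j\ze}G_n(u,\ze)\,du$ for the left tail, makes the sum-to-integral step exact: the cells tile precisely $[N\ze,+\infty)$ and $(-\infty,-N\ze]$, and your asymmetric treatment of the two tails is exactly what reproduces the stated limits $N\ze$ and $-N\ze$ without off-by-one slack. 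So the paper's one-line proof buys brevity and would suffice for a slightly weakened form of the lemma, while your argument yields the inequalities exactly as stated; the hypotheses (continuity of $g^{(n)}$, which justifies the iterated fundamental-theorem/Fubini representation, and $G_n(\cdot,\ze)\in L_1(\bR)$, which justifies the termwise triangle inequality and monotone summation of the cell integrals) are used in the same way in both.
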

\begin{proof} Using the mean value theorem, we obtain
\[
|(\De^n g)(\xi)|\le \ze \max_{\xi_1\in [\xi,\xi+\ze]}|(\De^{n-1} g')(\xi_1)|\le\cdots \le\ze^n\max_{\eta\in [\xi,\xi+n\ze]}|g^{(n)}(\eta)|.
\]
\end{proof}

  \subsection{Gaver-Wynn Rho algorithm}\label{GavWynn} 
  The inverse Laplace transform $V(T)$  of $\tilde V$  is approximated by 
\begin{equation}\label{GS31}
V(T, M)=\frac{\ln(2)}{t}\sum_{k=1}^{2M}\zeta_k \tV\left(\frac{k\ln(2)}{T}\right),
\end{equation}
where $M\in \bN$,
\begin{equation}\label{GS32}
\zeta_k(t, M)=(-1)^{M+k}\sum_{j=\lfloor (k+1)/2\rfloor}^{\min\{k,M\}}\frac{j^{M+1}}{M!}\left(\begin{array}{c} M \\ j\end{array}\right)
\left(\begin{array}{c} 2j \\ j\end{array}\right)\left(\begin{array}{c} j \\ k-j\end{array}\right)
\end{equation}
and $\lfloor a \rfloor$ denotes the largest integer that is less than or equal to  $a$.
If $T$ is large which in applications to option pricing means options of long maturities, then $q=k\ln(2)/T$ is small. In the present paper, efficient calculations of $\tV(f;q,x_1,x_2)$ are possible if $q\ge \sg$, where $\sg>0$ is determined by the parameters of the process and payoff function. Hence, if $T$ is large, we modify \eq{tVBrom}
\bbe\label{tVBrom_a}
V(f;T;x_1,x_2)=\frac{e^{aT}}{2\pi i}\int_{\Re q=\sg}e^{qT}\tV(f;q+a;x_1,x_2)\,dq,
\ee
where $a>0$ is chosen 
so that $\ln(2)/T+a>\max\{-\psi(i\mumpr),-\psi(i\muppr)\}$.
 In the paper, as in \cite{paired,Contrarian}, we apply Gaver-Wynn-Rho (GWR) algorithm, which is
  more stable than the Gaver-Stehfest method.
  
  Given a converging sequence $\{f_1, f_2,
\ldots\}$, Wynn's algorithm estimates the limit $f=\lim_{n\to\infty}f_n$ via $\rho^1_{N-1}$, where $N$ is even,
and $\rho^j_k$, $k=-1,0,1,\ldots, N$, $j=1,2,\ldots, N-k+1$, are calculated recursively as follows:
\begin{enumerate}[(i)]
\item
$\rho^j_{-1}=0,\ 1\le j\le N;$
\item
$\rho^j_{0}=f_j,\ 1\le j\le N;$
\item
in the double cycle w.r.t. $k=1,2,\ldots,N$, $j=1,2,\ldots, N-k+1$, calculate
\[
\rho^j_{k}=\rho^{j+1}_{k-2}+k/(\rho^{j+1}_{k-1}-\rho^{j}_{k-1}).
\]
We apply Wynn's algorithm with the Gaver functionals
\[
f_j(T)=\frac{j\ln 2}{T}\left(\frac{2j}{j}\right)\sum_{\ell=0}^j (-1)^j\left(\frac{j}{\ell}\right)\tilde f((j+\ell)\ln 2/T).
\]
 \end{enumerate}

   \subsection{Calculations in the case of finite variation processes with non-zero drift}\label{s:SL_vinvar}
 \subsubsection{Gaver-Wynn Rho algorithm is used}\label{ss:GavWynn_pos_mu}
 Consider 1D integral on the RHS of \eq{tVq3}. 
 
 (I-) If $x_1-a_1\le 0$, it is advantageous to deform the line of integration downwards. Hence, the contour $\cL_{\om_{10}, b_0, \om_0}$ in the new integral is defined by $\om_0<0$,
 and $\om_{10}\in \bR, b_0>0$ such that that $\sg_0:=\Im \psi(\chi_{\om{10},b_0,\om_0}(0))=\om_{10}+b_0\sin\om_0\in (0,\mup)$ and
 $q+\psi(\chi_{\om{10},b_0,\om_0}(0)=q+\mu\sg_0+\psi^0(i\sg_0))>0$. Alternatively, one can push the line of integration
 below 0, apply the residue theorem (the additional term $1/q$ appears), and choose $\om_0<0$,
 and $\om_{10}\in \bR, b_0>0$ so that that $\sg_0:=\Im \psi(\chi_{\om{10},b_0,\om_0}(0))=\om_{10}+b_0\sin\om_0\in (\mum,0)$ and
 $q+\psi(\chi_{\om{10},b_0,\om_0}(0))=q+\mu\sg_0+\psi^0(i\sg_0))>0$. 
  
  (I+) If $x_1-a_1> 0$, it is advantageous to deform the line of integration upwards. Hence, the contour $\cL_{\om_{10}, b_0, \om_0}$ in the new integral is defined by $\om_0>0$,
 and $\om_{10}\in \bR, b_0>0$ such that that $\sg_0:=\Im \psi(\chi_{\om{10},b_0,\om_0}(0))=\om_{10}+b_0\sin\om_0\in (0,\mup)$ and
 $q+\psi(\chi_{\om{10},b_0,\om_0}(0))=q+\mu\sg_0+\psi^0(i\sg_0)>0$. 
  
(II) Now we consider the 2D integral. Since $x_1-a_2<0$, it is advantageous to deform the outer line of integration downwards. Hence, the contour $\cL_{\om_{1-}, b_-, \omm}$ in the new integral is defined by $\omm<0$,
 and $\om_{1-}\in \bR, b_->0$ such that that $\sg_-:=\Im \psi(\chi_{\om_{1-},b_-,\omm}(0))=\om_{1-}+b_-\sin\omm\in (\mum,0)$ and
 $q+\psi(\chi_{\om{1-},b_-,\omm}(0))=q+\mu\sg_-+\psi^0(i\sg_-)>0$. Both conditions can be satisfied choosing suffciently small
 (in absolut value)  $\om_{1-}$ and $b_-$. The inner contour is deformed upward, and the same contour as in the case (I+) can be used.

 \subsubsection{Infinite trapezoid rule applied to the Bromwich integral}\label{ss:inf_trap_Bromwich}
 After the infinite trapezoid rule is applied, one can use the summation-by-parts procedure (see Sect. \ref{ss:sum-by-parts}).
 It can be shown that if $\gam<0<\gap$, the $n$-the derivative of $q/(q+\psi(\xi))$, each integrand in the formulas for the Wiener-Hopf factors, hence,  the price are of the order of  $O(|q|^{-n})$ as $q=\sg+iu\to \infty$ along the line of integration $\{\Re q=\sg\}$. Hence, applying the summation-by-parts procedure 3 times, one can
 reduce to the series which decays fairly fast, hence, the truncated sum with several hundreds of terms can satisfy a moderately small error tolerance. However, as in the case when the Gaver-Wynn Rho acceleration method is applied, the Wiener-Hopf factors have to be calculated for each $q$ in the truncated sum. Since $\psi^0(\eta)/(q-i\mu\eta)\to 0$ as $(q,\eta)\to \infty$
 ($q$ along the line of integration, and $\eta$ in the intersection of the half-plane $\{\mu\Im\eta>0\}$ and a domain of analyticity), the sinh-deformed contours for an efficient evaluation of the Wiener-Hopf factors are easy to construct.

  \section{Figures and tables}\label{ss:figures}
  
  \begin{figure}
\scalebox{0.75}
{\includegraphics{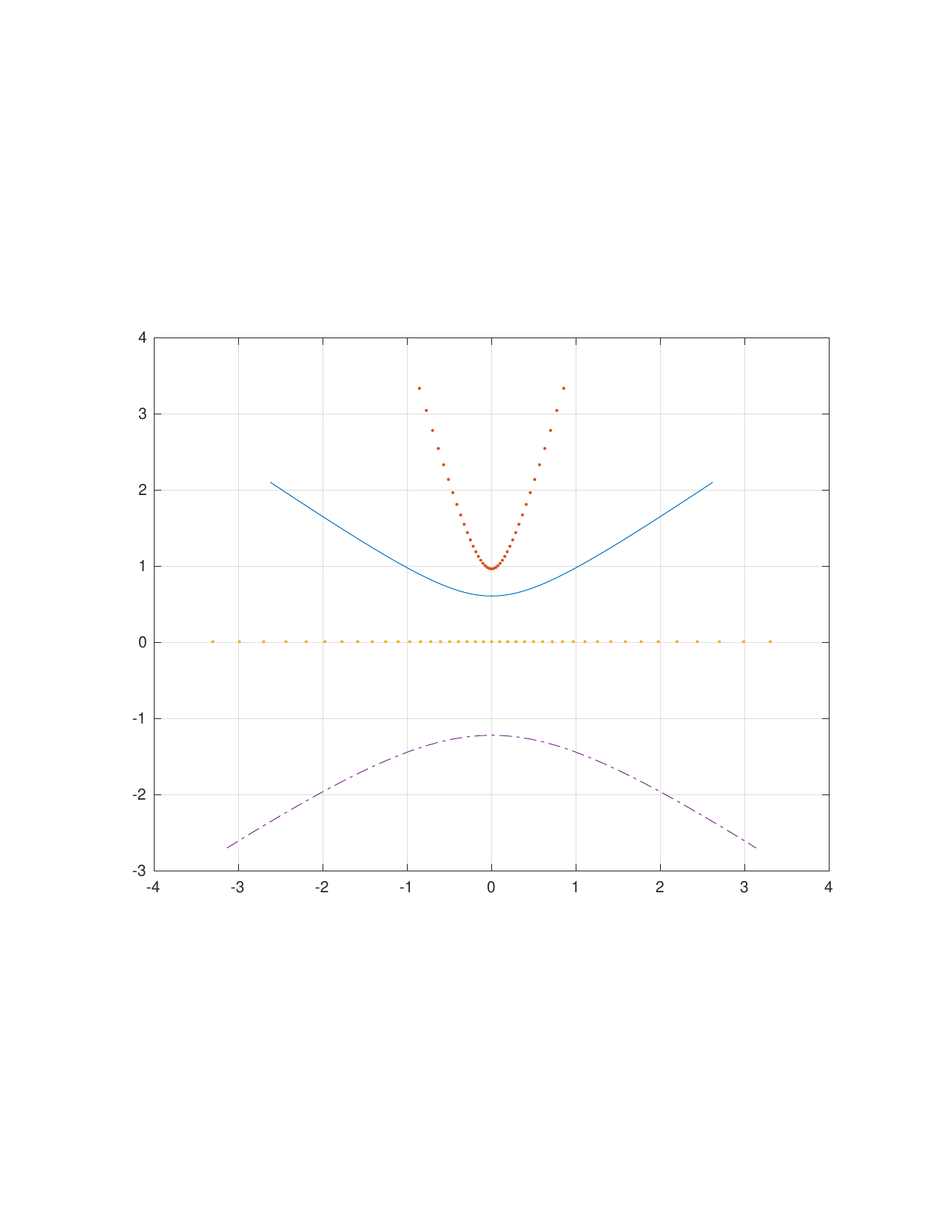}}
\caption{\small Example of curves $\cL^+$ (solid line) and $\cL^-$ (dash-dots). Example with $\lp=1, \lm=-2, \nu=1.2$. Dots: boundaries of the domain of analyticity
around $\cL^+$ used to derive the bound for the discretization error of the infinite trapezoid rule in the $y$-coordinate: dotted lines become
straight lines $\{\Im y=\pm d\}$.}
\label{fig:TwoGraphsandStrip}
\end{figure}
  \begin{figure}
\scalebox{0.75}
{\includegraphics{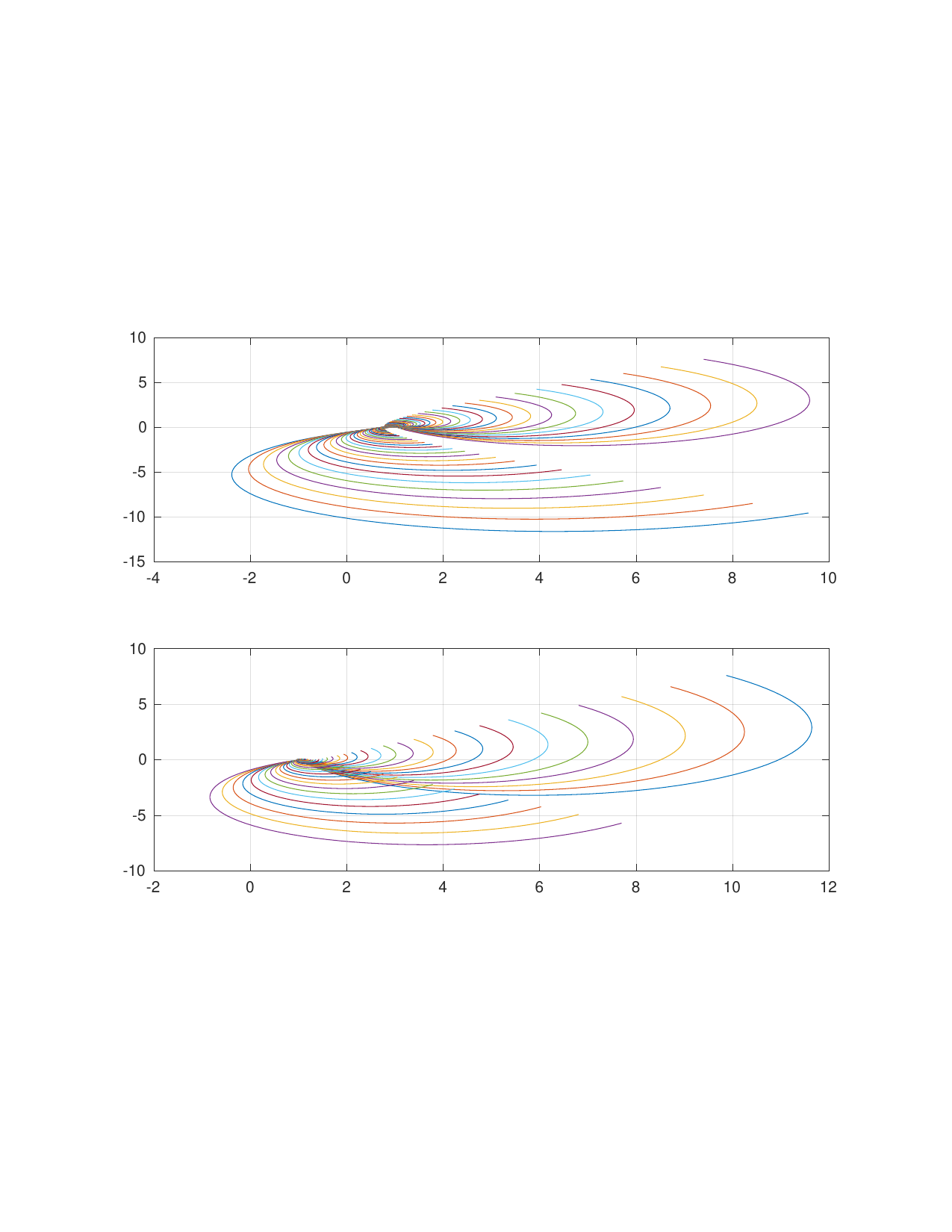}}
\caption{\small Plots of curves $\eta\mapsto 1+\psi(\eta)/q$, for $q$ in the SINH-Laplace inversion and $\eta$ on the contours $\cL^\pm$  (upper and lower panels)
in the numerical example with $\nu=1.2$.}
\label{Qphipm}
\end{figure}

\begin{table}
\caption{\small Joint cpdf $F(T,a_1,a_2):=\bQ[X_T\le a_1, \barX_T\le a_2\ |\ X_0=\barX_0=0]$. KoBoL 
close to NIG, with an almost symmetric jump density, and no ``drift": $m_2=0.1$, $\nu=1.2, \lm=-2, \lp=1$.
 }
 {\tiny
\begin{tabular}{c|c|cc|c|cc}
\hline\hline
$T=0.05$ & & & & & \\\hline
$a_2/a_1$ & -0.075 & -0.05 & -0.025 & 0 & 0.025 \\\hline
0.025 & 0.0426345508873718 &0.0758341778428274 & 0.176479681837557 & 0.493783805726552 & 0.76399258839732\\
0.05 & 0.0446956827465834 & 0.0789187973252002 & 0.181782048757841 & 0.506036792145469 & 0.825492125538671\\
0.075 &  0.0450873920315921 & 0.079458899594002 & 0.182586511426248 & 0.507408036688672 & 0.828608126596909 \\
0.1 &  0.0452106318743183 & 0.0796204107687271 & 0.182808929783218 & 0.507738655589658 & 0.829169593624407\\
0.175 & 0.0452978231524441 & 0.0797292390171655 & 0.182948969868149 & 0.507926759921863 & 0.829439308709987\\\hline

$T=0.25$ & & & & & \\\hline
0.025 & 0.163806126424503 & 0.222533168794254 & 0.292815888435677 & 0.358988211793687& 0.393398675917049 \\
0.05 & 0.197831466772809 & 0.270940241301468 & 0.364113238782974 & 0.465880513837506 & 0.552855276262823\\
0.075 & 0.209526961250121 & 0.287054894532268 & 0.387393027996113 & 0.501316508355731 & 0.609524332900865\\
0.1 & 0.214159056436717 & 0.293191765138545 & 0.395866562093269 & 0.513635238184172 & 0.628571055479703\\
0.175 & 0.217748710666063 & 0.297727492839728 & 0.401770665632438 & 0.521618850122037 & 0.639907339969623\\\hline
$T=1$ & & & & & \\\hline
0.025 &  0.178941818286114 & 0.190289038594875 & 0.199647908813292 & 0.206347351121367 & 0.209437388152747\\
0.05 &  0.260426736227358 & 0.280223680907225 & 0.29788417893014 & 0.312420272173741 & 0.322811896989456\\
0.075 & 0.313477022993733 & 0.340285459289499 & 0.365414405617852 & 0.387710627803938 & 0.405984471788573\\
0.1 & 0.348622321432066 &0.380779386234454 &  0.411911217215892 & 0.440836201412271 & 0.466304790550708\\
0.175 & 0.397364133265805 & 0.437693401916372 & 0.478455631551985 & 0.518663398654916 & 0.55725449371475\\\hline

$T=5$ & & & & & \\\hline
0.025 &  0.111436716966636 & 0.112239673751285 & 0.112868052194392 &0.113305936381633 & 0.113508446236642\\
0.05 &  0.260426736227358 & 0.280223680907225 & 0.29788417893014 & 0.312420272173741 & 0.322811896989456\\
0.075 & 0.313477022993733 & 0.340285459289499 & 0.365414405617852 & 0.387710627803938 & 0.405984471788573\\
0.1 & 0.348622321432066 &0.380779386234454 &  0.411911217215892 & 0.440836201412271 & 0.466304790550708\\
0.175 & 0.368564902845242 & 0.374400775302313 & 0.379792301029147 & 0.384713109733035 & 0.389137993602727\\\hline

$T=15$ & & & & & \\\hline

0.025 &  0.083599231183863 & 0.083725522194071 & 0.0838241629685378 & 0.0838929695457668 & 0.0839249287233805\\
0.05 & 0.130217710987261 & 0.130456839782095 & 0.130654399607263 &  0.130808705570046 & 0.13091634106018\\
0.075 & 0.169363038877019 & 0.169728032397852 & 0.170040043384744 & 0.170297815998657 & 0.170499151715123\\
0.1 & 0.204270598983103 & 0.204774983963964 & 0.205216260776888 & 0.205593481299844 & 0.205905127535884\\
0.175 & 0.293472724302235 & 0.294468206269081 & 0.295374834640356 & 0.296192060853885 & 0.296919211526691 \\\hline
\end{tabular}
}

\label{table:cpdf2}
 \end{table}

 \end{document}